\documentclass[10pt]{amsart}

\usepackage{CJKutf8}

\usepackage{amsmath}
\usepackage{amssymb}
\usepackage{amsthm} 
\usepackage{graphicx}
\usepackage{color}
\usepackage{scalerel}
\usepackage[dvipsnames]{xcolor}
\usepackage{stmaryrd}
\usepackage{soul}
\usepackage{comment}
\usepackage{mathabx}
\usepackage{enumerate}

\usepackage{amsfonts}
\usepackage{amssymb}
\usepackage[hidelinks]{hyperref}
\usepackage[capitalize]{cleveref}

\usepackage{nicefrac}
\usepackage{xfrac}

\usepackage[english]{babel}
\usepackage[autostyle]{csquotes}
\usepackage[shortlabels]{enumitem}
\usepackage[backend=bibtex, style=alphabetic, sorting=nyt, doi=false, isbn=false, url=false]{biblatex}
\renewbibmacro*{in:}{}  

\usepackage{tipa}
\usepackage[normalem]{ulem}

\usepackage{mathtools}

\DeclareMathOperator*{\forkindep}{\raise0.2ex\hbox{\ooalign{\hidewidth$\vert$\hidewidth\cr\raise-0.9ex\hbox{$\smile$}}}}

\newcommand{\tp}{\operatorname{tp}}

\DeclareTextCommand{\DZ}{OT2}{D2}

\newcommand{\im}{\mathrm{im}}

\newtheorem*{claim-star}{Claim}
\newtheorem*{theorem-non}{Theorem}
\newtheorem{theorem}{Theorem}[section] 
\newtheorem{lemma}[theorem]{Lemma}
\newtheorem{notation}[theorem]{Notation}

\newtheorem{prop-def}[theorem]{Proposition-Definition}
\newtheorem{corollary}[theorem]{Corollary}
\newtheorem{fact}[theorem]{Fact}
\newtheorem{fact-eh}[theorem]{Fact(?)}

\newtheorem{question}[theorem]{Question}
\newtheorem{proposition}[theorem]{Proposition}
\newtheorem{proposition-eh}[theorem]{Proposition(?)}
\newtheorem*{theorem-star}{Theorem}
\newtheorem*{conjecture-star}{Conjecture}
\newtheorem*{lemma-star}{Lemma}

\theoremstyle{definition}
\newtheorem{definition}[theorem]{Definition}
\newtheorem{example}[theorem]{Example}

\newtheorem{remark}[theorem]{Remark}

\theoremstyle{remark}

\newtheorem{observation}[theorem]{Observation}

\newcommand{\supp}{\mathrm{supp}}

\newcommand{\Th}{\mathrm{Th}}
\newcommand{\cnv}{\mathrm{cnv}}
\newcommand{\grp}{\mathrm{grp}}

\newenvironment{claimproof}[1][\proofname]
               {
                 \proof[#1]
                 
               }
               {
                 \endproof
               }

\allowdisplaybreaks 

\title{Model theory of convolution algebras}

\author[A. Berenstein]{Alexander Berenstein}
\address{Universidad de los Andes,
Cra 1 No 18A-12, Edificio H, Bogot\'{a}, Colombia}
\urladdr{www.matematicas.uniandes.edu.co/\textasciitilde aberenst}

\author[K. Gannon]{Kyle Gannon}
\address{Beijing International Center for Mathematical Research (BICMR) \\ Peking University \\ Beijing, China.}
\email{kgannon@bicmr.pku.edu.cn}
\urladdr{http://faculty.bicmr.pku.edu.cn/\textasciitilde kyle/}

\author[S. Song]{Shichang Song}
\address{School of Mathematics and Statistics\\Beijing Jiaotong University\\3 Shangyuancun Haidian District\\Beijing, China}
 \urladdr{https://faculty.bjtu.edu.cn/8629/}

 \keywords{Banach lattices, convolutions, dividing lines, ultrapowers, $L^1$-algebras on groups, amenability.}
\subjclass[2020]{03C66, 03C45, 03C20, 43A10, 43A20}

\begin{document}

\begin{abstract}
This paper deals with the model theory of convolution algebras $(L^1(G),*)$ for locally compact groups $G$, seen as Banach lattices equipped with the convolution product $*$. We first prove transfer principles for elementary equivalence and elementary embeddings when the underlying group $G$ is discrete, namely, $(\ell^1(G),*) \equiv (\ell^1(H),*)$ implies $G \equiv H$, while the converse holds when $G$ and $H$ are $\omega$-saturated (likewise for elementary substructures). Without $\omega$-saturation, the proof of the converse fails. Although pure Banach lattices are model-theoretically tame, our results imply that adding convolution yields wild behavior. For example, we prove that if $G$ is any locally compact, non-discrete group, then the formula $d(x,x*y)$ is unstable with respect to $\Th(L^1(G),*)$. Moreover, we show that if $G$ is discrete and contains a particular configuration of amenable subgroups, then the formula $d(x*y,z)\dotdiv\frac{1}{2}$ witnesses $\mathrm{TP}_2$ with respect to $\Th(\ell^1(G),*)$. As a consequence, if $G$ contains an infinite abelian subgroup, then $\Th(\ell^{1}(G),*)$ has $\mathrm{TP}_{2}$. We prove similar results in the locally compact non-discrete setting using the notion of an \emph{approximate identity}. Finally, we prove a `continuous-by-discrete' approximation theorem. Namely, convolution algebras of connected abelian Lie groups admit metric embeddings into ultraproducts of convolution algebras over finite abelian groups.

\end{abstract}

\maketitle

\section{Introduction}

The theory of the Banach lattice $(L^1(\mathbb{R}); +, 0, \wedge, \vee, \lVert \cdot \rVert)$ is model-theoretically tame: its models are Banach lattices over atomless measure spaces, it admits quantifier elimination \cite[Example 13.18]{HeIo}, is $\aleph_0$-stable, and has an explicit characterization of forking independence \cite{BYBHLp}. Despite this tameness, even the pure lattice structure exhibits some pathological behavior: it fails to be $\aleph_0$-categorical after adding a constant, fails to be strongly finitely based \cite{yaacov2014almost}, and is multidimensional\footnote{Other expansions of Banach $L_p$-lattices have been studied in the literature, mostly by adding automorphisms. For example, expansions by a generic automorphism/an amenable group of generic automorphisms were studied in \cite{ScielzoZ,ScielzoAmenable}.}. However, many Banach spaces of analytic interest carry additional algebraic structure. For any locally compact group $G$, the space $L^1(G)$ admits a natural convolution product after fixing a Haar measure. In this article, we study the model theory of convolution algebras $(L^1(G); +, 0, \wedge, \vee, \lVert \cdot \rVert, *)$. Our main thesis is that adding convolution drastically alters the model-theoretic complexity: whereas the underlying lattice structure is well-understood and tame, the resulting algebra is often wild, definably encoding the combinatorial complexity of the original group's amenable subgroups. Nevertheless, this complexity remains tractable in several ways, and we obtain a number of positive results. 

We write $(L^1(G),*)$ for the convolution algebra of a locally compact group $G$, and $(\ell^1(G),*)$ in the discrete case. Our main results are as follows. For discrete groups $G$ and $H$, we first establish a transfer principle between elementary equivalence of convolution algebras and elementary equivalence of the underlying groups. Specifically, if $(\ell^{1}(G),*) \equiv (\ell^{1}(H),*)$, then $G \equiv H$ as pure groups (Corollary \ref{interpretgroup}). This fact follows essentially from the observation that an isomorphic copy of $G$ is definable in $(\ell^{1}(G),*)$ via identify $G$ with the atoms. Conversely, if $G \equiv H$ and both are $\omega$-saturated, then $(\ell^{1}(G),*) \equiv (\ell^{1}(H),*)$ (Theorem \ref{thm:elementaryequiv}). This direction relies on Tarski-Vaught and an automorphism-lifting argument. Analogous results hold for elementary substructures. Following observations by Keller \cite{Keller1972}, we show that the $\omega$-saturation assumption cannot be omitted in the converse statement (Theorem \ref{thm:failure}). Indeed, \emph{amenability} is an elementary property in the language of convolution algebras and thus any group which is amenable but not uniformly amenable can be used to construct a counterexample to the converse. This result has a similar flavor to the main result in \cite{Goldbring2024}, which presents elementarily equivalent countable ICC groups whose von Neumann algebras are not elementarily equivalent, but with simpler techniques and in a simpler setting.

We next consider convolution algebras through the lens of classification theory. As stated earlier, Banach lattices admit a relatively tame model-theoretic classification as metric structures. However, equipping $L^1(G)$ with convolution often introduces wild behavior. We first prove that if $G$ is a locally compact, non-compact amenable group, then the formula $d(x * y, y)$ is unstable with respect to $\Th(L^{1}(G),*)$ (Discrete case, Proposition \ref{prop:unstable}; general case, Proposition \ref{prop:cont_amen}). Moreover, we further show that $\Th(L^{1}(G),*)$ is unstable whenever $G$ is non-discrete, leveraging the existence of a bounded approximate identity (Proposition \ref{notstab-compact}). We then prove that if a group $G$ contains either an element of infinite order or elements of arbitrarily large finite order, then its associated convolution algebra has $\mathrm{TP}_2$ (Theorem \ref{thm:ContIP}). Moreover, we establish that a certain natural configuration of amenable subgroups implies the convolution algebra admits $\mathrm{TP}_2$ (Theorem \ref{thm:general}). In the language of the convolution algebra, one can approximately uniformly encode the relation $a \in bH$, where $a,b \in G$ and $H$ is an amenable subgroup of $G$. Leveraging this, one can find elements in elementary extensions that behave like \emph{invariant means} on amenable subgroups relative to the diagonal embedding. Consequently, if $G$ contains an infinite abelian subgroup, then $(\ell^{1}(G),*)$ has $\mathrm{TP}_2$ (Corollary \ref{cor:tp2}). To illustrate, our results imply that $\ell^{1}(\mathbb{Z})$, $\ell^{1}\bigl(\bigoplus_{i < \omega} \mathbb{Z}/2\mathbb{Z}\bigr)$, and $L^{1}(\mathbb{R})$ all have $\mathrm{TP}_2$. In each of these cases, the property is witnessed by the formula $d(x \ast y, z) \mathbin{\dotdiv} \frac{1}{2}$. These results answer a question of Khanaki \cite[Question 11.3]{Khanaki}, who asked whether $(\ell^{1}(\mathbb{Z}),*)$ has $\mathrm{IP}$; indeed it does, witnessed by $d(x * y, y)$ (Corollary \ref{cor:tp2}). It remains open whether or not $(\ell^{1}(M),*)$ has $\mathrm{TP}_{2}$ (or even $\mathrm{IP}$), where $M$ is a Tarski monster (Question \ref{Question:Tarski}).

Finally, we prove an embedding theorem: for any connected abelian Lie group, its 
convolution algebra metrically embeds into a metric ultraproduct of convolution algebras 
over finite abelian groups (Theorem \ref{theorem:embeds}). This gives an instance where continuous convolution algebras arise naturally as limits of their discrete counterparts.

\section*{Acknowledgments}
We would like to thank Jinhe Ye for pointing out to us that every infinite solvable group contains an infinite abelian subgroup. We also like to thank Isaac Goldbring and Boris Zilber for helpful discussions related to this project. Additionally, the first named author would like to thank the organizers of the Beijing Model Theory Conference of 2024 where the first ideas behind this project were discussed.

This research was supported in part by CIRED-Sanford 2026. Gannon was supported in part by the Fundamental Research Funds for the Central Universities, Peking University, grant no.\ 7100604835 and by the National Natural Science Fund of China, grant no.\ 12501001. Song was supported in part by the Fundamental Research Funds for the Central Universities, Beijing Jiaotong University, grant no.\ 2025JBZX010.

Over the course of this project, the capabilities of AI models have changed significantly. Towards the beginning of this project, the second named author reports that multiple attempts to employ free AI models for experimental computations in $\ell^{1}(\mathbb{Z})$ proved somewhat unproductive. In recent months, the subscription based models have developed more serious capabilities, though we did not use these models on this project. However, Deepseek was helpful at directing the authors toward several references in the literature which were ultimately helpful. 

\section{Preliminaries}

Our notation is relatively standard regarding both logic and analysis. We refer the reader to \cite{BBHU2008} for basic background on continuous logic. We establish the following basic conventions: 

\begin{enumerate}
    \item If $r,s,\epsilon$ are real numbers with $\epsilon >0$, we write $r \approx_{\epsilon} s$ to mean that $|r - s| < \epsilon$. 
    \item The letters $G$ and $H$ will always denote groups, possibly with or without non-trivial topological structure (which will be established in the appropriate sections and statements). 
    \item  Formally, we let $\mathcal{L}_{\grp} = \{\cdot\}$ be the language of pure groups with only the product operation as a basic symbol. Both the identity and inverse map are definable in this language and so no information is lost. We use this language to simplify some of our arguments.
    \item A topological group is a group with a topology such that both multiplication and inversion are continuous. 
    \item If $G$ is a discrete group, we let $(\ell^{1}(G);+,0,\wedge,\vee,\lVert \ \ \rVert,*)$ denote the Banach lattice equipped with the convolution operation, $*$. We refer to this algebra as a \emph{convolution algebra} and will often abbreviate the signature by simply writing $(\ell^{1}(G),*)$. Unless specified otherwise, the convolution product is computed with respect to the standard counting measure on $G$.
    \item More generally, if $(G,\mu)$ is a locally compact group with a fixed left Haar measure $\mu$, we let $(L_{\mu}^{1}(G);+,0,\wedge,\vee,\lVert \ \ \rVert,*)$ denote the $L^{1}$-Banach lattice equipped with the convolution operation (computed relative to $\mu$). We refer to this algebra as a \emph{convolution algebra} and will often abbreviate the structure by simply writing $(L^{1}(G),*)$.
    \item We let $\mathcal{L}_{\cnv} = \{+,0, \wedge, \vee, \lVert \ \ \rVert,*\}$ be the language of convolution algebras. 
    \item If $X \subseteq G$, we write $\mathbf{1}_{X}$ to denote the characteristic function from $G$ to $\{0,1\}$. If $a \in G$, we write $\mathbf{1}_{\{a\}}$ simply as $\mathbf{1}_{a}$. 
    \item If $G$ is a discrete group, we let $c_{00}(G)$ denote the collection of functions from $G$ to $\mathbb{R}$ with finite support. We will often use the fact that this space is dense in $\ell^{1}(G)$. 
    \item If $G$ is a topological group, we let $C_{c}(G)$ denote the collection of continuous function from $G$ to $\mathbb{R}$ with compact support.
    \item Let $\mathcal{L}$ be a continuous signature and $M,N$ be $\mathcal{L}$-structures. A \textbf{metric embedding} is an isometry $f\colon M \to N$ which is an embedding, i.e., it is an isometry which preserves functions and relation symbols. 
\end{enumerate}

\subsection{Convolution on locally compact groups}  Let $G$ be a locally compact group (possibly discrete), let $\mathcal{B}$ be the Borel $\sigma$-algebra and let $\mu$ be a left Haar measure associated to $G$. We will study $L^{1}(G):=L^{1}(G,\mathcal{B},\mu)$ as a Banach lattice and we will enrich $L^1(G)$ with a product given by convolution,
\begin{equation*}(f*g)(x)=\int_Gf(y)g(y^{-1}x)d\mu(y).
\end{equation*}
To deal with the expansion $(L^{1}(G),*)$ model theoretically, we need to ensure that the new operations are uniformly continuous and that the convolution of two functions from $L^{1}(G)$ is again an object in $L^{1}(G)$.

\begin{fact}[Young's inequality] Let $G$ be a locally compact group. Assume that $f,g\in L^{1}(G)$, then $f*g\in L^{1}(G)$ and furthermore 
    $\lVert f*g\rVert_1\leq \lVert f\rVert_1\lVert g\rVert_1$.
\end{fact}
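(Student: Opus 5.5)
The plan is to first treat the case $f,g \ge 0$ using Tonelli's theorem, and then reduce the general case to it via $|f*g| \le |f|*|g|$.

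\textbf{Step 1: nonnegative Borel functions.} Fix Borel representatives $f,g \colon G \to [0,\infty]$. Consider the map $(y,x) \mapsto f(y)g(y^{-1}x)$ on $G \times G$. It is measurable because $(y,x)\mapsto y^{-1}x$ is continuous, hence Borel. Tonelli's theorem then gives
\begin{equation*}
\int_G (f*g)(x)\,d\mu(x) = \int_G f(y)\left(\int_G g(y^{-1}x)\,d\mu(x)\right) d\mu(y).
\end{equation*}
By left invariance of $\mu$, the inner integral equals $\int_G g\,d\mu = \lVert g\rVert_1$ for every $y$. Hence $\lVert f*g\rVert_1 = \lVert f\rVert_1\lVert g\rVert_1$ in this case.

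\textbf{Step 2: general $f,g$.} For arbitrary $f,g\in L^1(G)$, apply Step 1 to $|f|$ and $|g|$. This shows $(|f|*|g|)(x)<\infty$ for $\mu$-almost every $x$. At each such $x$, the integral defining $(f*g)(x)$ converges absolutely. The resulting function is measurable, since Fubini's theorem applies to the integrable function $f(y)g(y^{-1}x)$ on $G\times G$. It also satisfies $|f*g|\le |f|*|g|$ pointwise almost everywhere. Integrating gives $\lVert f*g\rVert_1 \le \lVert |f|*|g| \rVert_1 = \lVert f\rVert_1\lVert g\rVert_1$. Well-definedness on equivalence classes follows the same way: if $f=f'$ almost everywhere, then Step 1 applied to $|f-f'|$ and $|g|$ shows $f*g=f'*g$ almost everywhere.

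\textbf{Main obstacle.} The only delicate point is justifying Tonelli and Fubini, since these require $\sigma$-finiteness and a product-measurability statement. Haar measure need not be $\sigma$-finite when $G$ is not $\sigma$-compact. I will handle this by noting that any $f\in L^1(G)$ vanishes almost everywhere outside a $\sigma$-compact set. Indeed, $\{|f|>1/n\}$ has finite measure, so by inner regularity it is contained in a $\sigma$-compact set up to a null set. The same holds for $g$.

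So $f$ and $g$ are supported on $\sigma$-compact sets $A$ and $B$. Let $K$ be the open, hence closed, $\sigma$-compact subgroup generated by $A\cup B$ together with a compact neighbourhood of $e$. Then all integrals can be computed on $K$ and $K\times K$, where Haar measure is $\sigma$-finite and the Borel structure is well behaved, and the argument above goes through there. Alternatively, I could simply cite the standard treatment of this point in Folland's or Hewitt–Ross's text.
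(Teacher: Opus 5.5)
The paper gives no proof of this: Young's inequality is quoted as a standard fact. Your argument is the usual textbook proof (Tonelli plus left invariance for $f,g\ge 0$, then $|f*g|\le|f|*|g|$), and it is correct in substance. Two claims in your ``main obstacle'' paragraph need repair.

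First, $\sigma$-compactness gives you $\sigma$-finiteness, but not a well-behaved Borel structure. Take the compact group $K=(\mathbb{Z}/2\mathbb{Z})^{\mathbb{R}}$, which has cardinality $>2^{\aleph_0}$. The diagonal of $K\times K$ is the preimage of $\{e\}$ under $(y,x)\mapsto y^{-1}x$. It is closed, but it is not in $\mathcal{B}(K)\otimes\mathcal{B}(K)$. So $f(y)g(y^{-1}x)$ is $\mathcal{B}(K\times K)$-measurable but need not be product measurable, and the classical Tonelli theorem does not apply as stated. Invoke instead the Fubini--Tonelli theorem for Radon products (Folland, \emph{Real Analysis}, Ch.~7). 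It accepts $\mathcal{B}(K\times K)$-measurable integrands as soon as the Radon measures are $\sigma$-finite, which is exactly what your reduction provides.

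Second, inner regularity only puts $\{f\neq 0\}$ and $\{g\neq 0\}$ inside a $\sigma$-compact set up to a null set $N$. Discarding $N$ is harmless in the first factor, since $f$ is integrated against $\mu$. In the second factor, however, it alters $y\mapsto g(y^{-1}x)$ on $xN^{-1}$. Your well-definedness argument cannot absorb this, because it applies Step~1 over all of $G$ and so presupposes the reduction it is meant to support. There are two ways to close the loop. You can quote the standard fact that inversion preserves Haar-null sets. More cleanly, you can use outer regularity. Each $\{|g|>1/n\}$, and any null set such as $\{g\neq g'\}$, lies in an open set $U$ of finite measure. Such a $U$ meets only countably many cosets of a fixed $\sigma$-compact open subgroup $H$, because the nonempty pieces $U\cap xH$ are disjoint open sets of positive measure. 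The subgroup generated by $H$ and countably many coset representatives is still $\sigma$-compact and open. So $K$ can be chosen to contain $\{f\neq 0\}\cup\{g\neq 0\}$ (and $\{g\neq g'\}$) outright. Then $(f*g)(x)=0$ for every $x\notin K$, and all the work happens on $K\times K$. With these two adjustments the proof is complete.
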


From the previous result it follows easily that the convolution is uniformly continuous in the unit ball:

\begin{corollary} Let $G$ be a locally compact group. Then the operation given by convolution is uniformly continuous on the unit ball of $L^{1}(G)$ and its modulus of uniform continuity in $\Delta_*(\epsilon)=\epsilon/2$.
\end{corollary}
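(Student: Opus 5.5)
The plan is to derive this directly from Young's inequality via bilinearity of convolution. Take $f,g,f',g'$ in the unit ball of $L^{1}(G)$, so that $\lVert f\rVert_1,\lVert g\rVert_1,\lVert f'\rVert_1,\lVert g'\rVert_1\le 1$. Convolution is bilinear: this follows from linearity of the integral in the defining formula, and the integral is well defined almost everywhere by the Fact. We therefore write
\begin{equation*}
f*g-f'*g' = (f-f')*g + f'*(g-g').
\end{equation*}
Both differences $f-f'$ and $g-g'$ lie in $L^{1}(G)$, so Young's inequality applies to each term.

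Applying the triangle inequality and then Young's inequality to each term gives
\begin{equation*}
\lVert f*g-f'*g'\rVert_1 \le \lVert f-f'\rVert_1\lVert g\rVert_1+\lVert f'\rVert_1\lVert g-g'\rVert_1 \le \lVert f-f'\rVert_1+\lVert g-g'\rVert_1 .
\end{equation*}
Now suppose $\lVert f-f'\rVert_1<\epsilon/2$ and $\lVert g-g'\rVert_1<\epsilon/2$. Then the right-hand side is less than $\epsilon$. This is exactly uniform continuity on the unit ball with modulus $\Delta_*(\epsilon)=\epsilon/2$, taking the metric on pairs to be the coordinatewise one in the usual continuous-logic convention.

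No step is a real obstacle. The only points requiring care are that bilinearity holds almost everywhere, which is immediate, and that the restriction to the unit ball is used precisely to bound $\lVert g\rVert_1$ and $\lVert f'\rVert_1$ by $1$.

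On a ball of radius $R$ the same computation gives the modulus $\epsilon/(2R)$. This is what is needed if the structure is presented in the usual multi-sorted way with balls of each radius. In that presentation one also notes that, by Young's inequality, $*$ maps the product of the balls of radii $R$ and $S$ into the ball of radius $RS$.
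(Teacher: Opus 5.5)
Your proposal is correct and follows essentially the same argument as the paper. The paper uses the same splitting $f_1*g_1-f_2*g_2=(f_1-f_2)*g_1+f_2*(g_1-g_2)$, then applies the triangle inequality and Young's inequality with the unit-ball bounds to get $\epsilon/2+\epsilon/2$. Your extra remark giving the modulus $\epsilon/(2R)$ on the ball of radius $R$ is also correct.
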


\begin{proof}
Let $\epsilon>0$ and let $f_1,f_2,g_1,g_2\in L^{1}(G)$ be such that $\lVert f_1\rVert_1$, $\lVert f_2\rVert_1$, $\lVert g_1\rVert_1$, $\lVert g_2\rVert_1 \leq 1$ and $\lVert f_1-f_2\rVert_1 <\epsilon/2$, $\lVert g_1-g_2\rVert_1 <\epsilon/2$. Then 
$\lVert f_1*g_1-f_2*g_2\rVert _1\leq \lVert f_1*g_1-f_2*g_1\rVert _1+\lVert f_2*g_1-f_2*g_2\rVert_1\leq \lVert f_1-f_2\rVert_1\lVert g_1\rVert_1+\lVert f_2\rVert_1\lVert g_1-g_2\rVert_1<\epsilon/2+\epsilon/2=\epsilon$.
\end{proof}

To deal with $(L^{1}(G);+,0,\wedge, \vee, \lVert \ \ \rVert,*)$ as a metric structure, we work  in the unit ball of the space $L^{1}(G)$, and we may formally deal with the operations $+$, $\wedge$, $\vee$ as binary maps that take elements in the unit ball to elements in the ball of radius two. Similarly, the norm $\lVert \ \ \rVert$ induces a metric given by $d(f,g)=\lVert f-g \rVert_1$ which has values in $[0,2]$. Equivalently, the reader may prefer to divide by $2$ when taking the operations $+$, $\wedge$, $\vee$ to ensure that the resulting elements remain within the unit ball and divide by $2$ when defining the distance inside the ball to ensure the predicate has values in $[0,1]$.

In the final section, we establish an embedding theorem, showing that $(L^{1}(A),*)$ admits a linear metric embedding into the product $\prod_{D} (\ell^{1}(H_i),*)$, where $A$ is a connected abelian Lie group, $(H_i)_{i < \omega}$ is a sequence of finite abelian groups, and $D$ is a non-principal ultrafilter on $\mathbb{N}$. Since we are dealing with linear metric embeddings, the unit ball of $L^{1}(A)$ embeds into the unit ball of $\prod_{D} (\ell^{1}(H_i),*)$, so again, even though we are dealing with Banach spaces, we may formally work within the unit balls and extend the maps linearly to the whole spaces.

\subsection{(Discretely) amenable groups} 

Finally, we recall a classical fact about discretely amenable groups. There are many equivalent definitions for these groups, but for our purposes, the following two suffice. A proof of the following equivalence can be found in \cite{Namioka} (see Theorem 2.2 and Corollary 4.3; for groups, it is straightforward to prove that \emph{left amenable} is equivalent to \emph{right amenable} by inverting the family of sets in F\o lner's condition). 

\begin{definition}\label{def:FC}  Let $G$ be a discrete group. We say that $G$ is amenable if one of the following equivalent conditions holds:
\begin{enumerate} 
\item There exists a net $\{h_{\gamma}\}_{\gamma \in \Gamma}$ of elements in $\ell^{1}(G)$ such that for each $\gamma \in \Gamma$, $|\supp(h_\gamma)|$ is finite, $\lVert h_{\gamma} \rVert = 1$, and for every $a \in G$,
\begin{equation*} 
\lim_{\gamma \in \Gamma} \lVert h_{\gamma}*\mathbf{1}_a - h_{\gamma} \rVert = 0. 
\end{equation*} 
\item (F\o lner's criterion) There exists a net $(X_i)_{i \in I}$ of finite subsets of $G$ such that for every $g \in G$,
\begin{equation*}
    \lim_{i \in I} \frac{|(g\cdot X_i) \triangle X_i|}{|X_i|} = 0. 
\end{equation*}
If $G$ is countable and satisfies F\o lner's criterion, then the net may be replaced by a sequence.
\end{enumerate} 
\end{definition} 

\section{Banach lattices, definability of atoms and transfer theorems}\label{sec:atoms}

In this section we first prove some basic facts regarding Banach lattices of the form 
$(L^1(X, \mathcal{B}, \mu); +, 0, \wedge, \vee,\lVert \ \ \rVert)$. We prove the definability of the collection of characteristic functions of the atoms of the Banach lattice. When $X$ is equal to a discrete group $G$ and $\mu$ is the counting measure, we establish a transfer principle between the convolution algebra associated to $G$ and $G$ itself. In particular, we show that if $(\ell^{1}(G),*) \preceq (\ell^{1}(H),*)$ or $(\ell^{1}(G),*) \equiv (\ell^{1}(H),*)$, then $(G,\cdot) \preceq (H,\cdot)$ or $(G,\cdot) \equiv (H,\cdot)$, respectively. Assuming $(G,\cdot)$ and $(H,\cdot)$ are $\omega$-saturated, we prove a converse to this result. We conclude the section by giving examples of groups $G$ and $H$ such that $G \preceq H$ but $(\ell^{1}(G),*) \not \equiv (\ell^{1}(H),*)$; in particular, transfer fails whenever $G$ is amenable but not uniformly amenable.

\subsection{Defining the atoms} Recall that a measure space $(X,\mathcal{B},\mu)$ is \emph{decomposable} if there is a partition $\{X_i
: i \in I\}\subseteq \mathcal{B}$ of $X$ into measurable sets such that $\mu(X_i) < \infty$ for all $i \in I$
and such that for any subset $A$ of $X$, $A \in \mathcal{B}$ iff $A \cap X_i \in \mathcal{B}$ for all $i \in I$ and $\mu(A) =\sum_{i\in I} \mu(A \cap X_i)$. The reader can check \cite[section 2]{BYBHLp} for more details on decomposable spaces and abstract $L_p$ spaces. We may assume that the space $(X,\mathcal{B},\mu)$ is a disjoint union of two spaces, one of them \emph{atomless} and another consisting only of \emph{atoms}. Furthermore, we may take the atomic part in such a way that each atom has measure one, i.e., it has the counting measure over a discrete set as every atomic $L_p$ space can be represented in this way, see for example \cite[Proposition 1.6.4]{CeMe}.

In this subsection, we prove that the atomic part of $(L^{1}(X,\mathcal{B},\mu),+,0,\wedge,\vee,\lVert  \ \lVert )$ is definable in the sense of continuous logic. Most of the results from this subsection are folklore and were known to hold at least by Ben Yaacov and Henson, but they have not appeared in print. For completeness of the presentation we include them here. 

\begin{notation}
For $f\in L^{1}(X,\mathcal{B},\mu)$, we write $f^+=f\vee 0$ and $f^-=(-f)\vee 0$. We call $f^+$ the \textbf{positive part} of $f$ and $f^-$ the \textbf{negative part} of $f$.
\end{notation}

Note that for $f\in L^{1}(X,\mathcal{B},\mu)$, we have $f=f^+-f^-$.

\begin{definition}
We say a function $f\in L^{1}(X,\mathcal{B},\mu)$ is an \textbf{atom} if $f=\mathbf{1}_{a}$ for some $a\in X$ and $\lVert f \rVert = 1$.
\end{definition}

Note that these \emph{atoms} are precisely the characteristic functions of singleton sets that are atoms of the underlying measure space.

\begin{lemma}
    An element $f\in (L^{1}(X,\mathcal{B},\mu),+,0,\wedge,\vee,\lVert  \ \ \rVert )$ is an atom if and only if $f\geq 0$, $\lVert f\rVert =1$ and for all $n\in \mathbb{N}^{>0}$, and all functions $g$ satisfying $0\leq g\leq f$ with $\lVert g\rVert \geq 1/n$ we have $ng\geq f$.  
\end{lemma}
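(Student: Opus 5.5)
We prove the two directions separately, using the decomposition of $(X,\mathcal{B},\mu)$ fixed above into an atomless part and an atomic part in which every atom has measure one.

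\emph{Forward direction.} Suppose $f=\mathbf{1}_a$ with $\lVert f\rVert=1$, so $\{a\}$ is an atom of measure one. Clearly $f\geq 0$ and $\lVert f\rVert =1$. Let $0\le g\le f$. Then $g$ vanishes a.e.\ off $\{a\}$, so $g=c\mathbf{1}_a$ with $0\le c\le 1$, and $\lVert g\rVert = c$. If $\lVert g\rVert\ge 1/n$, then $c\ge 1/n$, hence $ng=nc\,\mathbf{1}_a\ge \mathbf{1}_a=f$.

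\emph{Reverse direction.} Assume $f\ge 0$, $\lVert f\rVert=1$, and the stated condition holds. Let $S=\{f>0\}$ (up to null sets). The proof has two steps.

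\emph{Step 1: $S$ is a single atom.} Suppose $S$ contains a measurable $B$ with $\int_B f>0$ and $\int_{S\setminus B} f>0$. Choose $n$ with $1/n\le \int_B f$ and put $g=f\mathbf{1}_B$. Then $0\le g\le f$ and $\lVert g\rVert\ge 1/n$, yet $ng=0<f$ on $S\setminus B$, a set of positive measure. This contradicts the hypothesis. Hence the finite measure $\nu=f\,d\mu$ is $\{0,1\}$-valued on subsets of $S$, up to normalization.

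This rules out any contribution from the atomless part. If $\nu(S\cap X_{\text{atomless}})>0$, then $\nu$ restricted there is atomless, since $\nu\ll\mu$. By Sierpiński's theorem we could then split off a set of $\nu$-measure strictly between $0$ and $\nu(S)$, which Step 1 forbids. Likewise $S$ cannot meet two distinct atoms. So $S=\{a\}$ for a single atom $a$ of the atomic part.

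\emph{Step 2: the constant is one.} Now $f=c\mathbf{1}_a$, and $\lVert f\rVert = c\mu(\{a\})=c=1$, because atoms have measure one. Hence $f=\mathbf{1}_a$.

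The one delicate point is the splitting argument on the atomless part. If instead we only know the measure space is decomposable, we reduce to a single $X_i$ of finite measure: since $f\in L^1$, the set $S$ is $\sigma$-finite. Then we use the standard fact that a non-atomic $\sigma$-finite measure admits sets of arbitrarily small positive measure. The rest of the argument is routine.
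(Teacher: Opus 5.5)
Your proof is correct and takes the same route as the paper. The forward direction is identical, and your Step 1 is the paper's contrapositive argument: take $g$ to be $f$ restricted to part of its support, so that $ng$ vanishes on the rest of the support and cannot dominate $f$. Your only addition is to justify, via Sierpi\'nski's theorem and the normalization that atoms have measure one, the splitting $f=r_1h_1+r_2h_2$ of a positive norm-one non-atom, which the paper simply asserts.
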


\begin{proof}
Assume first that $f \in (L^{1}(X,\mathcal{B},\mu),+,0,\wedge,\vee,\lVert  \ \ \rVert )$ is an atom. Then $f=\mathbf{1}_{a}$ with $\lVert f\lVert =1$. Then $f\geq 0$. If $g$ is a function with $0\leq g\leq f$ and $\lVert g\lVert \geq 1/n$, then $g=r \mathbf{1}_{a}$ with $r\geq 1/n$, so $ng\geq \mathbf{1}_{a}=f$.

Now assume that $f \in (L^{1}(X,\mathcal{B},\mu),+,0,\wedge,\vee,\lVert  \ \ \rVert )$ is not an atom. Then $f=r_1 h_1+ r_2h_2$ with $h_1\perp h_2$ (i.e. they have disjoint support), $\lVert h_1\lVert =\lVert h_2\lVert =1$, $r_1,r_2>1/n$ for some $n$ and $r_1+r_2=1$. Let $g=r_1 h_1$, then $0\leq g\leq f$ and $\lVert g\lVert \geq 1/n$. But $ng \perp h_2$ and thus $ng\ngeq f$.
\end{proof}

The previous argument shows that $f$ being an atom is part of the information contained in $\tp(f/\emptyset)$. We will now prove a stronger result, namely the definability of the set of atoms. We will use the following criterion for the definability of a set (see \cite[Proposition 9.19]{BBHU2008}).

\begin{fact}\label{test-definibility} Let $M$ be a metric $\mathcal{L}$-structure. For a closed set $D\subseteq M$, the following are equivalent:\\
(1) $D$ is definable in $M$ over $\emptyset$. \\
(2) There is a sequence $(\varphi_n(x) | n \geq  1)$ of $\mathcal{L}$-formulas and a sequence
$(\delta_n | n \geq 1)$ of positive real numbers such that for all
$n \geq 1$ and $x\in M$,
\begin{equation*} x \in D \Longrightarrow \varphi_n(x) = 0,
\end{equation*}
and,
\begin{equation*} \varphi_n(x) \leq \delta_n \Longrightarrow dist(x,D)<1/n. 
\end{equation*}
\end{fact}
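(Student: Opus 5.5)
The statement is a characterization of definable sets from \cite[Proposition 9.19]{BBHU2008}. I would prove it directly from the definition: a closed set $D\subseteq M$ is definable over $\emptyset$ exactly when the distance predicate $x\mapsto dist(x,D)$ is a $\emptyset$-definable predicate in $M$, that is, a uniform limit on $M$ of $\mathcal{L}$-formulas. Let $K$ be a bound on the diameter of the sort; here $K=2$ on the unit ball. I assume $D\neq\emptyset$; the empty case is handled by convention.

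For (1)$\Rightarrow$(2), I pick formulas $\psi_n(x)$ with $\sup_{x\in M}|\psi_n(x)-dist(x,D)|<\frac{1}{3n}$. Then I set $\varphi_n(x)=\psi_n(x)\dotdiv\frac{1}{3n}$, which is still a formula since connectives are closed under truncated subtraction of constants, and I take $\delta_n=\frac{1}{3n}$.
\begin{itemize}
\item If $x\in D$, then $\psi_n(x)<\frac{1}{3n}$, so $\varphi_n(x)=0$.
\item If $\varphi_n(x)\le\delta_n$, then $\psi_n(x)\le\frac{2}{3n}$, so $dist(x,D)<\frac{1}{n}$.
\end{itemize}

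For (2)$\Rightarrow$(1), I build formulas converging uniformly to $dist(x,D)$. Let $h_n\colon[0,\infty)\to[0,K]$ be the continuous connective $h_n(t)=K\min(1,t/\delta_n)$, so $h_n(0)=0$ and $h_n(t)=K$ for $t\ge\delta_n$. Define
\begin{equation*}
\chi_n(x)=\inf_y\bigl(d(x,y)+h_n(\varphi_n(y))\bigr),
\end{equation*}
which is an $\mathcal{L}$-formula.

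The upper bound comes from restricting the infimum to $y\in D$. There $\varphi_n(y)=0$, so $\chi_n(x)\le \inf_{y\in D}d(x,y)=dist(x,D)$.

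The lower bound splits over the $y$ in the infimum.
\begin{itemize}
\item If $\varphi_n(y)\ge\delta_n$, the term is at least $K\ge dist(x,D)$.
\item If $\varphi_n(y)<\delta_n$, then $dist(y,D)<\frac1n$ by hypothesis. Hence $d(x,y)\ge dist(x,D)-dist(y,D)>dist(x,D)-\frac1n$.
\end{itemize}
So $dist(x,D)-\frac1n\le\chi_n(x)\le dist(x,D)$ for all $x\in M$. Therefore $\chi_n\to dist(\cdot,D)$ uniformly, and $D$ is definable.

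The only delicate step is choosing the penalty $h_n$. It must vanish at $0$, so that points of $D$ impose no penalty and the upper bound holds. It must also reach the diameter bound by $\delta_n$, so that points far from $D$ cannot lower the infimum. The clipped linear connective above satisfies both and is a legitimate continuous connective. Everything else is bookkeeping with the triangle inequality.
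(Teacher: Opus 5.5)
Your proof is correct. For (1)$\Rightarrow$(2) you truncate a uniform approximant of $dist(x,D)$, and for (2)$\Rightarrow$(1) you use $\chi_n(x)=\inf_y\bigl(d(x,y)+h_n(\varphi_n(y))\bigr)$, which is the standard argument; the paper gives no proof of its own and simply quotes the result from \cite[Proposition 9.19]{BBHU2008}. One cosmetic point: in the BBHU formalism connectives take values in a bounded interval, so you should write $\inf_y \min\{K, d(x,y)+h_n(\varphi_n(y))\}$, and this changes neither of your bounds since $dist(x,D)\le K$.
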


Recall that if a set is definable over $\emptyset$, then quantifying over the set will give definable predicates
that can be treated as formulas in the language.

\begin{lemma}\label{def-positive functions} For a Banach lattice $L^{1}(X,\mathcal{B},\mu)$, the set of positive functions of norm one is a definable set and thus a quantifiable set.   
\end{lemma}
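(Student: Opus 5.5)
We apply Fact~\ref{test-definibility} to the closed set $D=\{f : f\geq 0,\ \lVert f\rVert =1\}$, using the single formula
\[
\varphi(x) \;=\; \bigl|\,\lVert x\rVert - 1\,\bigr| \;+\; \lVert x^-\rVert,
\qquad\text{where } x^-=(-x)\vee 0 .
\]
This is an $\mathcal{L}$-formula, since $-x$, $\vee$, $0$ and the norm are all in the language (here $-x$ can be written $0-x$, or obtained from $x\wedge 0$ because $x^-=-(x\wedge 0)$ and $\lVert x^- \rVert=\lVert x\wedge 0\rVert$). The set $D$ is closed because $x\mapsto x^-$ and the norm are continuous. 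Clearly $\varphi(x)=0$ for every $x\in D$, which gives the first condition of Fact~\ref{test-definibility} with $\varphi_n=\varphi$ for all $n$.

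For the second condition, take $x$ with $\varphi(x)\le\delta$. Then $\lVert x^-\rVert\le \delta$ and $|\lVert x\rVert -1|\le\delta$. Set $y=x^+/\lVert x^+\rVert$, which is a positive element of norm one and so lies in $D$. Note that $\lVert x\rVert=\lVert x^+\rVert+\lVert x^-\rVert$ holds in $L^1$, because $x^+$ and $x^-$ have disjoint supports. Hence
\[
\lVert x^+\rVert \in [\,1-2\delta,\ 1+\delta\,],
\]
and in particular $x^+\neq 0$ once $\delta<1/2$, so $y$ is well defined. We then estimate
\[
\lVert x-y\rVert \;\le\; \lVert x^-\rVert + \lVert x^+ - y\rVert \;=\; \lVert x^-\rVert + \bigl|\,\lVert x^+\rVert-1\,\bigr| \;\le\; \delta + 2\delta \;=\; 3\delta .
\]
Choosing $\delta_n = 1/(4n)$ therefore gives $dist(x,D)\le 3/(4n)<1/n$. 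By Fact~\ref{test-definibility}, $D$ is definable over $\emptyset$.

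Quantifiability then follows from the remark after Fact~\ref{test-definibility}: quantifying over a definable set yields definable predicates. The only step needing care is the decomposition $\lVert x\rVert=\lVert x^+\rVert+\lVert x^-\rVert$, which is where the $L^1$ norm is used. The remaining point is just that the norm is available as a predicate of the language, so the distance estimate can be stated directly in terms of formulas.
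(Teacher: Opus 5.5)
Your proof is correct and is essentially the paper's argument. The paper uses $\theta(x)=\max\{|\lVert x\rVert -1|,\,|\lVert x^+\rVert-\lVert x\rVert|\}$, whose second term equals $\lVert x^-\rVert$ in $L^1$, and it likewise shows that a small value of $\theta$ forces $x$ to be close to $x^+/\lVert x^+\rVert$; your version only replaces $\max$ by a sum and makes the choice $\delta_n=1/(4n)$ explicit.
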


\begin{proof}
Let $\theta(x)=\max\{|\lVert x\lVert -1|,|\lVert x^+\lVert -\lVert x\lVert |\}$. If $f$ is a positive function of norm one, then $\theta(f)=0$. 

Now let $0<\epsilon<1/2$ and let $\delta(\epsilon)\leq \epsilon$ be such that if $1-\delta<\lVert f\lVert <1+\delta$ then $d(f,f/\lVert f\lVert)<\epsilon$. If $\theta(f)<\delta/2$ then 
  $1-\delta/2<\lVert f\lVert <1+\delta/2$
  and $\lVert f\lVert -\lVert f^+\lVert <\delta/2$
  so $1-\delta<\lVert f^+\lVert <1+\delta$ and $d(f,f^+/\lVert f^+\lVert )\leq d(f,f^+)+d(f^+,f^+/\lVert f^+\lVert )<\delta+\epsilon\leq 2\epsilon$.
\end{proof}

\begin{proposition}\label{def-atoms} For a Banach lattice $L^{1}(X,\mathcal{B},\mu)$, the set of functions $D=\{f: f$ is an atom$\}$ is definable and thus a quantifiable set.
\end{proposition}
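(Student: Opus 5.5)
We apply the criterion of Fact \ref{test-definibility}, using Lemma \ref{def-positive functions} so that we may quantify over the definable set $P$ of positive functions of norm one. First note that $D$ is closed: if $\mathbf{1}_{a_k}\to f$, then $\lVert \mathbf{1}_{a_k}-\mathbf{1}_{a_l}\rVert \in\{0,2\}$, so the sequence is eventually constant. Next we need a formula expressing that $f$ cannot be split into two disjoint pieces of substantial mass. For $x\in P$, the element $x\wedge \tfrac{1}{2}\cdot$ does not immediately give pieces, since scalar multiplication is not in the language. We therefore quantify over $y$ with $0\le y\le x$, written as $y\in P$ scaled or via $y\wedge x$. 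Concretely, set
\begin{equation*}
\psi(x,y)=\min\bigl\{\lVert y\wedge x\rVert,\ \lVert x-(y\wedge x)\rVert\bigr\}\dotdiv \bigl\lVert (y\wedge x)\wedge (x-(y\wedge x))\bigr\rVert ,
\end{equation*}
and
\begin{equation*}
\varphi(x)=\max\Bigl\{\theta(x),\ \sup_{y}\psi(x,y)\Bigr\},
\end{equation*}
where $\theta$ is the formula from Lemma \ref{def-positive functions} and $y$ ranges over the unit ball. Thus $\varphi(x)$ measures how well $x$ splits as $u+v$ with $u,v\ge 0$, $u=y\wedge x$ and $v=x-u$, so that both have large mass while $u\wedge v$ is small. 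We will take $\varphi_n=\varphi$ for every $n$ and choose $\delta_n$ appropriately.

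If $x=\mathbf{1}_a$, then every $u$ with $0\le u\le x$ has the form $r\mathbf{1}_a$, so $v=(1-r)\mathbf{1}_a$ and $u\wedge v=\min(r,1-r)\mathbf{1}_a$. Hence $\psi(x,y)=0$ and $\varphi(x)=0$.

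The main step is the converse: if $\varphi(f)\le\delta$ and $\delta$ is small, then $f$ is close to some atom. Using $\theta(f)$ small, we replace $f$ by $f^+/\lVert f^+\rVert$ at a small cost, so we may assume $f\ge0$ with $\lVert f\rVert=1$. Decompose $f=f_c+\sum_{a} f(a)\mathbf{1}_a$ into its atomless part and its atomic part. If the largest atomic coefficient $m=\max_a f(a)$ is at least $1-\epsilon$, then $d(f,\mathbf{1}_a)\le 2\epsilon$ and we are done. Otherwise the mass outside any single atom exceeds $\epsilon$, and we exhibit a good split. Using the atomless part (where the mass is continuously divisible) and the atoms, we greedily choose a measurable set $S$ such that the mass of $u=f\mathbf{1}_S$ lies in $[\epsilon/2,1-\epsilon/2]$. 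This is possible because every atom carries mass less than $1-\epsilon$, while mass in atomless regions can be divided continuously. The pieces $u=f\mathbf{1}_S$ and $v=f\mathbf{1}_{S^c}$ are then disjoint, so $u\wedge v=0$. Taking $y=u$ gives $\psi(f,y)\ge\epsilon/2$, and hence $\varphi(f)\ge\epsilon/2$, a contradiction once $\delta<\epsilon/2$. Choosing $\epsilon\approx 1/(4n)$ and accounting for the normalization error yields $\delta_n$ with $\varphi(x)\le\delta_n\Rightarrow dist(x,D)<1/n$.

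We expect the main obstacle to be the greedy mass-splitting step. One must check that a subset of mass in $[\epsilon/2,1-\epsilon/2]$ always exists when $m<1-\epsilon$. The approach is to add atoms in decreasing order of size together with pieces of the atomless part. Each atom has mass below $1-\epsilon$, and if some atom has mass above $\epsilon/2$ we can simply take that atom alone. Otherwise every increment is at most $\epsilon/2$, so the partial sums cannot jump over the interval. A secondary detail is the bookkeeping when $f$ is only approximately positive and normalized, which is handled by the uniform continuity of $\psi$ in $x$.
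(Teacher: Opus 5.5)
Your proof is correct. It shares the paper's skeleton: both apply Fact~\ref{test-definibility}, use $\theta$ to pass from $x$ to the positive norm-one element $x^+/\lVert x^+\rVert$, and rely on the fact that a positive norm-one element far from every atom splits as $h_1+h_2$ with $h_1\perp h_2$ and both pieces of substantial mass.

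Where you differ is the test formula. The paper uses the $n$-indexed family $\psi_n(x)=\sup_{y\in P}\min\{\lVert x\wedge y\rVert\dotdiv 1/n,\lVert (x-n(x\wedge y))^+\rVert\}$. This transcribes the preceding lattice characterization of atoms ($0\le g\le f$ and $\lVert g\rVert\ge 1/n$ imply $ng\ge f$). It is evaluated at $x^+$, so no continuity transfer is needed, and it detects a split through the witness $y=h_1/\lVert h_1\rVert$, for which $(x^+-n(x^+\wedge y))^+=h_2$. Your single formula $\min\{\lVert u\rVert,\lVert v\rVert\}\dotdiv\lVert u\wedge v\rVert$, with $u=y\wedge x$ and $v=x-u$, measures approximate disjoint splittability directly. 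It does not depend on $n$; only $\delta_n$ varies. The price is the Lipschitz transfer from $x$ to $x^+/\lVert x^+\rVert$, which is routine.

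Your greedy mass-splitting argument fills in a step the paper asserts without argument: that if every disjoint decomposition of $x^+$ has a piece of small norm, then $x^+=r\mathbf{1}_a+h$ with $\lVert h\rVert$ small. You also verify that $D$ is closed, which the criterion requires.

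One detail needs repair. You let $y$ range over the whole unit ball, so $u=y\wedge x$ need not be positive. Your atom-case computation assumes $0\le u\le x$ and therefore does not cover every $y$. The conclusion survives:
\begin{itemize}
\item For $x=\mathbf{1}_a$, off $a$ one has $u=-y^-$, $v=y^-$ and $u\wedge v=-y^-$, which add the same amount to all three norms.
\item At $a$, the value $s=u(a)\le 1$ satisfies $\min\{|s|,1-s\}=|\min\{s,1-s\}|$.
\end{itemize}
Hence $\psi(\mathbf{1}_a,y)=0$ for every $y$ in the unit ball. Alternatively, restrict $y$ to the positive part of the unit ball, where your witness $y=f\mathbf{1}_S$ already lies. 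If you quantify over $P$ instead, use $y=f\mathbf{1}_S/\lVert f\mathbf{1}_S\rVert$, which still gives $y\wedge f=f\mathbf{1}_S$.
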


\begin{proof} We will apply Fact \ref{test-definibility}
with $D=\{f: f$ is an atom$\}$. For each positive integer $n>0$ consider the formulas:
$$\psi_n(x) = \sup_{\substack{\lVert y\lVert =1 \\ y\geq 0}} 
\min\{\lVert x\wedge y\lVert \dotdiv 1/n,\lVert (x-n(x\wedge y))^+\lVert \} $$

$$\varphi_n(x) = \max\{|\lVert x^+\lVert -1|, \psi_n(x^+),| \lVert x\lVert -\lVert x^+\lVert |\} $$
and the sequence $(\varphi_n|n\geq 1)$. 

\begin{claim-star} If $x=\mathbf{1}_{a}$ is an atom, then for all $n>0 $ we have $\varphi_n(x)=0$. 
\end{claim-star}
\begin{claimproof} First observe that $x^+=x$ , and thus $\lVert  x^+\lVert =\lVert x\lVert =1$, and so the first and third terms inside of $\varphi_{n}$ are zero. For any $y \geq 0$ with $\lVert y \rVert = 1$, if $\lVert x\wedge y\lVert \leq 1/n$ then $\min\{\lVert x\wedge y\lVert \dotdiv 1/n,\lVert (x-n(x\wedge y))^+\lVert \}=0$. Otherwise $\lVert x \wedge y \rVert > 1/n$ implies $x \wedge y > (1/n) \mathbf{1}_{a}$, and so $n(x \wedge y) \geq x$ and $(x - n(x \wedge y))^{+} = 0$. Thus $\psi_{n}(x) = 0$ and so $\varphi_{n}(x) = 0$. 
\end{claimproof}

\begin{claim-star} If $\varphi_n(x)<\frac{1}{n^2}$ and $n\geq 2$ then $dist(x,D)\leq 2/n+4/n^2< 6/n$.
\end{claim-star}

\begin{claimproof} First note that $1+1/n^2\geq \lVert x^+\lVert \geq 1-1/n^2$ and that $d(x,x^+)=\lVert x\lVert-\lVert x^+\lVert<\frac{1}{n^2}$. Write $x^+=h_1+h_2$ with 
$h_1\perp h_2$ and suppose, in order to obtain a contradiction, that we have both $\lVert h_1\lVert ,\lVert h_2\lVert >1/n+1/n^2$. 
Since $x^+$ is positive, we must have $h_1\geq 0$ and $h_2\geq 0$. Since $\lVert x^+\lVert \leq 1+1/n^2$ and $h_1,h_2$ are disjoint then $\lVert h_1\lVert ,\lVert h_2\lVert \leq 1$. Let $g_1=h_1/\lVert h_1\lVert $ and $g_2=h_2/\lVert h_2\lVert $, both are positive functions of norm one. Then $g_1\geq h_1$, $g_2\geq h_2$, $g_1\wedge x^+=h_1$, $g_2\wedge x^+=h_2$ and $\lVert x^+\wedge g_i\lVert > 1/n+1/n^2$ for $i=1,2$. Similarly $(x^+-n(x^+\wedge g_1))^+=(x^+-nh_1)^+=h_2$. 

Now choose $y=g_1$, we have $y\geq 0$, $\lVert y\lVert =1$,
$\lVert x^+\wedge y\lVert - 1/n>1/n^2$ and
$\lVert (x-n(x^+\wedge y))^+\lVert =\lVert h_2\lVert >1/n+1/n^2$, which contradicts $\psi_n(x^+)\leq\varphi_n(x)\leq\frac{1}{n^2}$. 
\end{claimproof}

Thus, for any decomposition $x^+=h_1+h_2$ with 
$h_1\perp h_2$ we must have $\lVert h_i\lVert \leq 1/n+1/n^2$
for some $i$. This implies that $x^+=r\mathbf{1}_{a}+h$ with $\lVert h\lVert \leq 1/n+1/n^2$ for some $r>0$ and $h\perp \mathbf{1}_{a}$. Since $\lVert x^+\lVert \geq 1-1/n^2$ we must have that $r>1-1/n-2/n^2$. Since $\lVert x^+\lVert \leq 1+1/n^2$ we must also have $r\leq \lVert r\mathbf{1}_{a}\lVert \leq \lVert x^+ \lVert \leq 1+1/n^2$. Thus $|r-1|<\frac{1}{n}+\frac{1}{n^2}$. Then, 
\begin{align*}
dist(x,D)&\leq dist(x,x^+)+dist(x^+,D)\leq 1/n^2+dist(x^+,\mathbf{1}_{a})\\
&\leq 1/n^2+|1-r|+\lVert h\lVert \leq 2/n+4/n^2. \qedhere
\end{align*}
\end{proof}

\begin{remark} If $(X,\mathcal{B},\mu)$ is atomless, then for $n \geq 4$ and $f \in L^{1}(X,\mathcal{B},\mu)$, we have $\varphi_{n}(f) \geq 1/8$ and thus the formula shows that the distance to the collection of atoms is uniformly bounded from $0$, i.e., $D=\emptyset$.

We may write $f=f^+-f^-$. Let $A=\{x: f(x)>0\}$ and $C=\{x: f(x)<0\}$. If $\lVert f\cdot \mathbf{1}_C\rVert\geq 1/8$ then $\lVert f\rVert -\lVert f^+\rVert\geq 1/8$ and $\varphi_n(f)\geq 1/8$ for all $n\geq 1$. If $\lVert f\cdot \mathbf{1}_A\rVert\leq 7/8$ or $\lVert f\cdot\mathbf{1}_A\rVert\geq 9/8$ then $|\lVert f^+\lVert-1|\geq 1/8$ and $\varphi_n(f)\geq 1/8$ for all $n\geq 1$.
    Assume then that $7/8\leq \lVert f\cdot \mathbf{1}_A\rVert\leq 9/8$ and $\lVert f\cdot \mathbf{1}_C\rVert\leq 1/8$. Let $B\subseteq A$ be such that $\lVert f\cdot \mathbf{1}_B\rVert=\frac{1}{2}\lVert f\cdot \mathbf{1}_A\rVert$, then we also have $\lVert f\cdot \mathbf{1}_{A\setminus B}\rVert=\frac{1}{2}\lVert f\cdot \mathbf{1}_A\rVert$ and notice that $\lVert f\cdot \mathbf{1}_B\rVert<1$. Take $g=\frac{f\cdot\mathbf{1}_{B}}{\lVert f\cdot \mathbf{1}_B\rVert}$, then $g\geq 0$, $\lVert g\lVert=1$ and 
    $f\wedge g=f\cdot\mathbf{1}_{B}$.
    For $n\geq 1$ we have $(f-n(f\wedge g))^+=f\cdot\mathbf{1}_{A\setminus B}$ and thus $\lVert (f-n(f\wedge g))^+\lVert= \lVert f\cdot \mathbf{1}_{A\setminus B}\rVert\geq 7/16$. On the other hand, for $n\geq 4$ we get $\lVert f\wedge g\lVert-1/n\geq \lVert f\cdot \mathbf{1}_B\rVert-1/n\geq 7/16-1/4=3/16$ and thus $\psi_n(f)\geq \min(7/16,3/16)\geq 1/8$ and we get $\varphi_n(f)\geq 1/8$.
\end{remark}

Finally, we take the opportunity to discuss the structure of non-standard models of convolution algebras. To understand these models, we first need the following result.

\begin{lemma}\label{unifdistanceatoms}
 Let $G$ be a group and let $f,g\in \ell_1(G)$. Let $\mathcal{A}$ be the set of atoms of $\ell_1(G)$. If $f,g\geq 0$ and $\lVert f\rVert=\lVert g\rVert=1$, then $d(f*g,\mathcal{A})\geq d(f,\mathcal{A})$. 
\end{lemma}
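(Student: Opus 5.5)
The plan is to compute the distance to the atoms explicitly for nonnegative norm-one elements, which reduces the inequality to a sup-norm bound that follows directly from the definition of convolution.

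\textbf{Step 1: a formula for the distance to the atoms.} By the definition of atom, $\mathcal{A}=\{\mathbf{1}_a : a\in G\}$. Let $h\in\ell^1(G)$ satisfy $h\geq 0$ and $\lVert h\rVert=1$. For each $a\in G$ we have $0\le h(a)\le 1$, so
\[
d(h,\mathbf{1}_a)=|1-h(a)|+\sum_{b\neq a}h(b)=(1-h(a))+(1-h(a))=2\bigl(1-h(a)\bigr).
\]
Taking the infimum over $a$ gives
\[
d(h,\mathcal{A})=2\bigl(1-\lVert h\rVert_\infty\bigr),\qquad \text{where } \lVert h\rVert_\infty=\sup_{a\in G}h(a).
\]
The supremum is in fact attained, since only finitely many $a$ have $h(a)>\varepsilon$, but we do not need this.

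\textbf{Step 2: $f*g$ is again nonnegative of norm one.} Since $f,g\ge 0$, we have $(f*g)(x)=\sum_{y}f(y)g(y^{-1}x)\ge 0$. By Tonelli (all terms are nonnegative),
\[
\lVert f*g\rVert=\sum_x\sum_y f(y)g(y^{-1}x)=\sum_y f(y)\sum_x g(y^{-1}x)=\lVert f\rVert\,\lVert g\rVert=1.
\]
Hence the formula from Step 1 applies to $f*g$ as well as to $f$.

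\textbf{Step 3: the sup-norm bound.} For every $x\in G$,
\[
(f*g)(x)=\sum_y f(y)g(y^{-1}x)\le \lVert f\rVert_\infty\sum_y g(y^{-1}x)=\lVert f\rVert_\infty\,\lVert g\rVert=\lVert f\rVert_\infty .
\]
Here we used that $y\mapsto y^{-1}x$ is a bijection of $G$. Thus $\lVert f*g\rVert_\infty\le\lVert f\rVert_\infty$.

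\textbf{Step 4: conclusion.} Combining Steps 1–3,
\[
d(f*g,\mathcal{A})=2\bigl(1-\lVert f*g\rVert_\infty\bigr)\ge 2\bigl(1-\lVert f\rVert_\infty\bigr)=d(f,\mathcal{A}).
\]

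The only step needing any care is Step 1, and specifically the observation that positivity and norm one force $d(h,\mathbf{1}_a)=2-2h(a)$ exactly. Once that identity is established, the rest is the one-line estimate in Step 3.
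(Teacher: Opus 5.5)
Your proof is correct and rests on the same two facts as the paper's: for nonnegative norm-one $h$ the distance to $\mathbf{1}_a$ is $2-2h(a)$, and each value of $f*g$ is at most the largest value of $f$. In the paper this second fact appears as the bound $\sum_p k_{i_p}l_{j_p}\le \max_i k_i$.

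The paper proves this for finitely supported $f,g$, splits into two cases, and then passes to general $f,g$ by density. Your formulation $d(h,\mathcal{A})=2(1-\lVert h\rVert_\infty)$, combined with $\lVert f*g\rVert_\infty\le\lVert f\rVert_\infty\lVert g\rVert_1$, works directly in all of $\ell^1(G)$, so neither the case split nor the density step is needed. It also makes automatic, via the bijection $y\mapsto y^{-1}x$, that the indices $j_p$ are distinct, which the paper uses without stating.
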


\begin{proof}
Assume that $f,g$ have finite support. Say $f=\sum_{i=1}^m k_i\mathbf{1}_{g_i}$ and $g=\sum_{j=1}^nl_j\mathbf{1}_{h_j}$. Since $f,g\geq 0$ and $\lVert f\rVert=\lVert g\rVert=1$, we have that $k_i,l_j\geq 0$ for each $i,j$ and $\sum_{i=1}^mk_i=\sum_{j=1}^nl_j=1$. Note that
$$d(f,\mathcal{A})=\min_{1\leq i\leq m}\{1-k_i+k_1+k_2+\cdots+\widehat{k_i}+\cdots+k_m\}=\min_{1\leq i\leq m}\{2-2k_i\}.$$
Also, $f*g=\sum_{1\leq i\leq m,1\leq j\leq n}k_il_j\mathbf{1}_{g_ih_j}$. We will consider two cases:

\textbf{Case 1.} Assume all elements in the family $(g_ih_j:1\leq i\leq m,1\leq j\leq n)$ are distinct. Then $$d(f*g,\mathcal{A})=\min_{i,j}\{1-k_il_j+k_1l_1+\cdots+\widehat{k_il_j}+k_ml_n\}=\min_{i,j}\{2-2k_il_j\},$$ and thus $$d(f*g,\mathcal{A})=\min_{i,j}\{2-2k_il_j\}\geq\min_{i}\{2-2k_i\}=d(f,\mathcal{A}).$$
\textbf{Case 2.} Assume some the elements $g_ih_j$ are equal, say $g_{i_1}h_{j_1}=\cdots=g_{i_p}h_{j_p}=a$. Then, it is easy to verify that 
$$d(f*g,\chi_a)=2-2k_{i_1}l_{j_1}-\cdots-2k_{i_p}l_{j_p}.$$
Note that $k_{i_1}l_{j_1}+\cdots+k_{i_p}l_{j_p}\leq\max_i k_i(l_{j_1}+\cdots+l_{j_p})\leq \max_i k_i$. Thus $$d(f*g,\mathcal{A})=\min_a d(f*g,\mathbf{1}_a)\geq \min_a\{2-2\max_i k_i\}= d(f,\mathcal{A}).$$

Since the family of functions $f,g$ with finite support are dense and the distance function to the collection of atoms is continuous, the result follows by taking limits.
\end{proof}

\begin{observation} Consider a discrete group $G$ and its associated convolution algebra $(\ell^1(G),+,0,\wedge,\vee,*,\lVert \cdot\rVert)$ and let $\mathcal{A}_{\ell^1(G)}$ be the set of atoms of $(\ell^1(G),*)$. Let $(B,+,0,\wedge,\vee,*,\lVert \cdot\rVert)\equiv (\ell^1(G),+,0,\wedge,\vee,*,\lVert \cdot \rVert)$, and let $\mathcal{A}_B$ be the set of atoms of the structure $B$. Since the convolution defines a group operation on $\mathcal{A}_{\ell^1(G)}$ (which is isomorphic to the group $G$), the multiplication in the algebra $(B,*)$ defines a group operation on $\mathcal{A}_B$ and we write $(G_B,\cdot)$ for the group associated to $(\mathcal{A}_B,*)$ (see Subsection \ref{subsec:defininggroup} for details on the definability of the discrete group $(G_B,\cdot)$). We may write
       $$(B,*)= (\ell^1(G_B) \oplus L^1(X_B),*)$$
       and we call $L^1(X_B)$ the \textbf{atomless part} of $B$ and $\ell^1(G_B)$ the \textbf{atomic part} of $B$. Both of these spaces are Banach lattices and we write the direct sum as an $\ell^1$ sum of the two spaces, in the sense that if $f\in \ell^1(G_B)$, $g\in L^1(X_B)$, then $\lVert f+g\rVert=\lVert f\rVert+\lVert g\rVert$ (see for example \cite[section 2]{He76} for more details). 

        Since the convolution of two atoms is again an atom, the atomic part is closed under $*$.
  By Lemma \ref{unifdistanceatoms}, the convolution of two elements in $L^1(X_B)$ belongs again to $L^1(X_B)$. Finally, it also follows from Lemma \ref{unifdistanceatoms}, that the convolution of an element in $L^1(X_B)$ with an element in $\ell^1(G_B)$ belongs again to $L^1(X_B)$. Thus, $L^1(X_B)$ is an ideal in $B$.

Consider the special case when $G$ is infinite discrete, $\{g_n:n\geq 1\}$ are different elements in $G$, $\mathcal{U}$ is a non-principal ultrafilter over $\mathbb{N}$ and define $B=\Pi_\mathcal{U}(\ell^1(G),*)$ and the functions $f_n= \frac{1}{n}\sum_{i=1}^n \mathbf{1}_{g_i}$. Let $[f]_\mathcal{U}=(f_n)_\mathcal{U}\in B$ which has norm one. Note  that $dist(f_n,\mathcal{A}_{\ell_1(G)})=\frac{n-1}{n}$ and thus $[f]_\mathcal{U}\in L^1(X_B)$, so in a saturated model $L^1(X_B)\neq \emptyset$ and in fact $L^1(X_B)$ is infinite dimensional. Note that the ideal $L^1(X_B)$ is not definable as in it omitted in $(\ell^1(G),*)$ and infinite dimensional in $\Pi_\mathcal{U}(\ell^1(G),*)$.
\end{observation}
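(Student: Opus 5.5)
The statement bundles several claims about a model $B \equiv (\ell^1(G),*)$: the atoms of $B$ form a group under $*$; $B$ splits as $\ell^1(G_B)\oplus L^1(X_B)$; $L^1(X_B)$ is an ideal; and in the ultrapower the atomless part is nonempty, infinite dimensional and not definable. The plan is to transfer each property from $(\ell^1(G),*)$ to $B$. This works because the set $\mathcal{A}$ of atoms and the set of positive norm-one elements are $\emptyset$-definable (Proposition \ref{def-atoms}, Lemma \ref{def-positive functions}), so we may quantify over them.

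\textbf{Group structure on the atoms.} First, $\sup_{x,y\in\mathcal{A}} d(x*y,\mathcal{A})$ has value $0$ in $\ell^1(G)$, hence in $B$. Since $\mathcal{A}_B$ is closed, $\mathcal{A}_B$ is closed under $*$.

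Associativity is a universal sentence, so it transfers directly.

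For the identity and inverses we use that distinct atoms are at distance $2$. This holds in $\ell^1(G)$, i.e.\ $\sup_{x,y\in\mathcal{A}}\min\{d(x,y),\,2\dotdiv d(x,y)\}=0$, and so it holds in $B$. Now consider the sentences
\[
\inf_{e\in\mathcal{A}}\sup_{x\in\mathcal{A}}\max\{d(e*x,x),d(x*e,x)\}=0
\quad\text{and}\quad
\sup_{x\in\mathcal{A}}\inf_{y\in\mathcal{A}} d(x*y,e)=0 .
\]
They transfer to $B$. Their approximate witnesses are exact, because any two atoms that are near a common point must coincide. Hence $(\mathcal{A}_B,*)$ is a group $G_B$.

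\textbf{The decomposition.} $B$ is an abstract $L^1$-Banach lattice, since that class is axiomatizable. By Kakutani's representation together with the decomposition recalled at the start of this section, $B\cong \ell^1(Y)\oplus L^1(X_B)$ as an $\ell^1$-sum, where $Y$ carries counting measure. The lattice atoms of this representation are exactly the elements of the definable set $\mathcal{A}_B$, so $Y$ is identified with $G_B$.

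\textbf{The ideal property.} This is the step I expect to need the most care. Lemma \ref{unifdistanceatoms} is expressed by the condition
\[
\sup_{f,g\geq 0,\ \|f\|=\|g\|=1}\bigl(d(f,\mathcal{A})\dotdiv d(f*g,\mathcal{A})\bigr)=0 .
\]
We also need the right-handed version, $d(f*g,\mathcal{A})\geq d(g,\mathcal{A})$, which is proved by the same computation with the roles of the $k_i$ and $l_j$ exchanged. In addition, for $f,g\geq 0$ we need $f*g\geq 0$ and $\|f*g\|=\|f\|\|g\|$. All of these transfer to $B$.

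Now take $f\ge 0$ of norm $1$ in $L^1(X_B)$. Then $f\perp \mathbf{1}_a$ for every atom, so $d(f,\mathbf{1}_a)=2$, which is the maximal value. Let $g\geq 0$ have norm $1$ and lie in either part. Then $f*g$ is positive of norm $1$ with $d(f*g,\mathcal{A}_B)=2$. For a positive norm-one $h$, having $d(h,\mathbf{1}_a)=2$ forces $h\perp\mathbf{1}_a$, so $f*g\in L^1(X_B)$; the same argument applies to $g*f$.

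General elements are handled by bilinearity of $*$, splitting into positive and negative parts and rescaling. Together with the fact that $L^1(X_B)$ is a closed subspace, this shows it is a two-sided ideal.

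\textbf{The ultrapower example.} By Proposition \ref{def-atoms}, $d(\cdot,\mathcal{A})$ is a definable predicate, so it commutes with ultraproducts: $d([f]_{\mathcal U},\mathcal{A}_B)=\lim_{\mathcal U} d(f_n,\mathcal{A}_{\ell^1(G)})$. This limit is the maximal possible distance, since $f_n$ spreads mass $1/n$ over $n$ atoms. So $[f]_{\mathcal U}$ is positive of norm one and disjoint from every atom, i.e.\ it lies in $L^1(X_B)$.

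For infinite dimensionality, split $\{g_i\}$ into infinitely many disjoint infinite blocks and average over each block. This produces infinitely many pairwise disjoint norm-one elements of $L^1(X_B)$.

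Finally, suppose $L^1(X_B)$ were definable, say its positive norm-one part. It is empty in $\ell^1(G)$ but nonempty in the elementarily equivalent structure $\Pi_{\mathcal U}\ell^1(G)$. This is impossible, because emptiness of a $\emptyset$-definable set is expressed by its distance predicate being identically $1$, which transfers between elementarily equivalent structures.
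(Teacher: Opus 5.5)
Your proposal is correct and follows essentially the same route as the paper. You transfer everything through the definability of the atoms and of the positive unit sphere, obtain the splitting from the atomic/atomless decomposition of abstract $L^1$ lattices, derive the ideal property from Lemma \ref{unifdistanceatoms} transferred to $B$, and place $[f]_{\mathcal{U}}$ in the atomless part by evaluating $d(\cdot,\mathcal{A})$ coordinatewise. You also supply details the paper leaves implicit: the right-handed inequality $d(f*g,\mathcal{A})\geq d(g,\mathcal{A})$ needed for a two-sided ideal, and preservation of positivity and norm under convolution. Two cosmetic fixes remain: put the inverse condition under the same $\inf_{e\in\mathcal{A}}$ as the identity condition so that it is a sentence; and for non-definability, argue directly that a distance predicate for $L^1(X_B)$ would equal $\lVert x\rVert$ on $\ell^1(G)$ but not at $[f]_{\mathcal{U}}$, which avoids passing to its positive norm-one part.
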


\begin{question}The Banach algebra $B=\Pi_\mathcal{U}(\ell^1(G),*)$ contains an atomic part and an atomless part and thus this new structure is not isomorphic to $(L^{1}(H),*)$ for a group $H$. Is there an analytic description of the class of Banach algebras which are elementarily equivalent to $(\ell^1(G),*)$? 
\end{question}

\subsection{Convolution equivalence implies group equivalence}\label{subsec:defininggroup}

In this subsection, we prove that if $G$ is a subgroup of $H$ and $(\ell^{1}(G),*) \preceq (\ell^{1}(H),*)$, then $(G,\cdot) \preceq (H,\cdot)$ as pure groups. Likewise, if $(\ell^{1}(G),*) \equiv (\ell^{1}(H),*)$ then $(G,\cdot) \equiv (H,\cdot)$ again in the pure group language. To prove these statements, we will define a map from the language of pure groups to the language of convolution algebras and induct on the complexity of formulas. We first define a map from discrete $\mathcal{L}_{\grp}$-formulas to continuous $\mathcal{L}_{\cnv}$-formulas. For the purpose of this section only, we will use $x,y,z$ to denote discrete variables (associated to the language $\mathcal{L}_{\grp}$) and $\mathbf{x},\mathbf{y},\mathbf{z}$ to denote continuous variables (associated to the language $\mathcal{L}_{\cnv}$). Throughout, we let $D(\mathbf{x})$ denote the zeroset for the atoms in a convolution algebra. By Proposition \ref{def-atoms}, this is a definable set.

\begin{definition} We define a map from discrete $\mathcal{L}_{\grp}$-formulas in variables $\{x_i: i < \omega\}$ to $\mathcal{L}_{\cnv}$-formulas in variables $\{\mathbf{x}_i: i < \omega\}$ inductively. First, we define a map $\gamma$ from $\mathcal{L}_{\grp}$-terms to $\mathcal{L}_{\cnv}$-terms. We then define an induced map $\Gamma$ from $\mathcal{L}_{\grp}$-formulas to $\mathcal{L}_{\cnv}$-formulas. 
\begin{enumerate} 
\item For any discrete variable $x_i$, we let $\gamma(x_i) = \mathbf{x}_i$. 
\item If $t_1(\bar{x})$ and $t_2(\bar{y})$ are $\mathcal{L}_{\grp}$-terms in which $\gamma$ has been defined, then we define $\gamma[t_1(\bar{x}) \cdot t_2(\bar{y})]= \gamma(t_1(\bar{x})) * \gamma(t_2(\bar{y}))$.  
\item If $t_1(\bar{x})$ and $t_2(\bar{y})$ are $\mathcal{L}_{\grp}$-terms, then $\Gamma[t_1(\bar{x}) = t_2(\bar{y})] = d(\gamma(t_1(\bar{x})),\gamma(t_2(\bar{y})))$. 
\item If $\psi(\bar{x})$ and $\psi(\bar{y})$ are $\mathcal{L}_{\grp}$-formulas in which $\Gamma$ has been defined, then we define $\Gamma(\phi \wedge \psi) = \max\{\Gamma(\phi), \Gamma(\psi)\}$.
\item  If $\psi(\bar{x})$ is an $\mathcal{L}_{\grp}$-formula in which $\Gamma$ has been defined, then we define $\Gamma(\neg \psi) = 1 - \Gamma(\psi)$. 
\item If $\psi(\bar{x})$ is an $\mathcal{L}_{\grp}$-formula in which $\Gamma$ has been defined, then we define $\Gamma(\exists y\psi(\bar{x},y)) = \inf_{y \in D} \Gamma(\psi(\bar{x},y)).$
\end{enumerate} 
If $\varphi(x_1,...,x_n)$ is an $\mathcal{L}_{\grp}$-formula, then we denote the associated $\mathcal{L}_{\cnv}$-formula $\Gamma(\varphi(x_1,...,x_n))$ by  $\hat{\varphi}(\mathbf{x}_1,...,\mathbf{x}_n)$. 
\end{definition} 

\begin{lemma}\label{lemma:iso_atomic} Fix a discrete group $G$. The map $g \mapsto \mathbf{1}_{g}$ from $G$ to $D(\ell^{1}(G))$ is an isomorphism of pure groups. 
\end{lemma}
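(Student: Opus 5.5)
The plan is to check directly the three ingredients of an isomorphism: the map is well defined with image exactly $D(\ell^{1}(G))$, it is injective, and it turns group multiplication into convolution. Since $D(\ell^{1}(G))$ is the set of atoms, these facts show at once that the atoms form a group under $*$ and that $g \mapsto \mathbf{1}_{g}$ identifies this group with $G$.

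\emph{The image is the set of atoms.} With the counting measure every singleton is an atom of measure one, so $\lVert \mathbf{1}_g \rVert = 1$ and each $\mathbf{1}_g$ is an atom in the sense of the definition above. Conversely, every atom is by definition of the form $\mathbf{1}_a$ with $a \in G$. By Proposition \ref{def-atoms}, the zeroset $D(\ell^{1}(G))$ is precisely this set of atoms, so the map is onto $D(\ell^{1}(G))$.

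\emph{Injectivity.} If $g \neq h$, then $\mathbf{1}_g$ and $\mathbf{1}_h$ are distinct functions, indeed $d(\mathbf{1}_g,\mathbf{1}_h)=2$. Hence the map is a bijection onto $D(\ell^{1}(G))$.

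\emph{Homomorphism.} Compute from the definition of convolution with the counting measure:
\[
(\mathbf{1}_g * \mathbf{1}_h)(x)=\sum_{y\in G}\mathbf{1}_g(y)\,\mathbf{1}_h(y^{-1}x)=\mathbf{1}_h(g^{-1}x).
\]
The right-hand side equals $1$ exactly when $g^{-1}x = h$, that is, when $x = gh$, and is $0$ otherwise. Therefore $\mathbf{1}_g * \mathbf{1}_h = \mathbf{1}_{gh}$.

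A bijection that preserves the product transports the group structure of $G$ to $(D(\ell^{1}(G)),*)$. In particular $*$ restricted to $D(\ell^{1}(G))$ is associative, has identity $\mathbf{1}_e$, and has inverses $\mathbf{1}_{g^{-1}}$. This gives an isomorphism of pure groups in $\mathcal{L}_{\grp}$.

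None of these steps is a real obstacle. The only point needing care is that the image is all of $D(\ell^{1}(G))$, which rests on the atoms in $\ell^{1}(G)$ being exactly the indicators of points; this follows from the counting measure and Proposition \ref{def-atoms}.
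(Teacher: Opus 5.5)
Your proof is correct and follows essentially the same approach as the paper, which verifies $\mathbf{1}_{g_1} * \mathbf{1}_{g_2} = \mathbf{1}_{g_1 g_2}$ directly from the definition of convolution. The paper leaves bijectivity onto the atoms implicit, and you spell it out. Your appeal to Proposition \ref{def-atoms} is not needed there: $D$ is by definition the set of atoms, and that proposition only supplies its definability.
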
 

\begin{proof} This follows directly from the definition of convolution. Notice that 
\begin{equation*} 
(\mathbf{1}_{g_1} * \mathbf{1}_{g_2})(x) = \int_{x \in G} \mathbf{1}_{g_1}(y) \mathbf{1}_{g_2}(y^{-1} x) dm = \mathbf{1}_{g_2}(g_1^{-1} \cdot x) = \mathbf{1}_{g_1 \cdot g_2}(x). \qedhere
\end{equation*} 
\end{proof} 

\begin{lemma}\label{lemma:01} Fix a discrete group $G$. If $g_1,...,g_n \in G$ then for any $\mathcal{L}_{\grp}$-formula $\varphi(x_1,...,x_n)$, 
$ \hat{\varphi}(\mathbf{1}_{g_1},...,\mathbf{1}_{g_n})^{\ell^1(G)}\in \{0,1\}$.
\end{lemma}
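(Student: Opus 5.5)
We argue by induction on the complexity of $\varphi$. In fact we prove the slightly stronger statement: for all $g_1,\dots,g_n \in G$,
\[
\hat{\varphi}(\mathbf{1}_{g_1},\dots,\mathbf{1}_{g_n})^{\ell^1(G)} = 0 \iff G \models \varphi(g_1,\dots,g_n),
\]
and otherwise the value is $1$. Carrying the truth condition along keeps the induction self-contained, and the quantifier step needs it.

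\textbf{Terms.} First we show that for every $\mathcal{L}_{\grp}$-term $t(\bar x)$,
\[
\gamma(t)(\mathbf{1}_{g_1},\dots,\mathbf{1}_{g_n}) = \mathbf{1}_{t^G(g_1,\dots,g_n)}.
\]
This follows by induction on terms using Lemma \ref{lemma:iso_atomic}, since $\mathbf{1}_a * \mathbf{1}_b = \mathbf{1}_{ab}$.

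\textbf{Atomic formulas.} For $t_1 = t_2$, the value is $d(\mathbf{1}_a,\mathbf{1}_b)$ with $a = t_1^G(\bar g)$ and $b = t_2^G(\bar g)$. This is $0$ if $a = b$ and $\lVert \mathbf{1}_a - \mathbf{1}_b\rVert = 2$ otherwise. Here we must match the convention from the Preliminaries: if the metric is normalized by dividing by $2$, the value is exactly $1$. Alternatively, one reads the atomic clause as truncated at $1$. We adopt the normalized metric so that the atomic case lands in $\{0,1\}$ and agrees with truth in $G$. I expect this normalization issue to be the only real subtlety.

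\textbf{Connectives.} For $\wedge$, the maximum of two values in $\{0,1\}$ lies in $\{0,1\}$, and it is $0$ exactly when both are $0$, i.e.\ when both conjuncts hold. For $\neg$, the map $r \mapsto 1-r$ swaps $0$ and $1$, matching negation.

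\textbf{Quantifier.} For $\exists y\,\psi$, the value is $\inf_{\mathbf{y} \in D} \hat{\psi}(\bar{\mathbf{1}}_{g},\mathbf{y})$. In $\ell^1(G)$ the definable set $D$ is exactly $\{\mathbf{1}_h : h \in G\}$. By the induction hypothesis, each value $\hat{\psi}(\bar{\mathbf{1}}_g,\mathbf{1}_h)$ lies in $\{0,1\}$ and is $0$ iff $G \models \psi(\bar g,h)$. An infimum over a set of values contained in $\{0,1\}$ is again in $\{0,1\}$; the infimum is attained since only two values are possible. It equals $0$ iff some $h$ witnesses $\psi$, i.e.\ iff $G \models \exists y\,\psi(\bar g,y)$.

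This completes the induction and proves the lemma.
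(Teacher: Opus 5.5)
Your proof is correct and follows the paper's argument: induction on formula complexity, with the atomic case from Lemma \ref{lemma:iso_atomic}, routine connectives, and the quantifier step because an infimum of $\{0,1\}$-values over the nonempty set of atoms $\{\mathbf{1}_h : h \in G\}$ is again $0$ or $1$. Carrying the truth condition is not actually needed here (contrary to your remark, the quantifier step only uses that each value lies in $\{0,1\}$, and the paper proves the truth equivalence separately as Lemma \ref{lemma:good}); your explicit normalization of the metric so that $d(\mathbf{1}_a,\mathbf{1}_b)=1$ rather than $2$ is a genuine point that the paper handles only implicitly through its remark in the Preliminaries.
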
 

\begin{proof} We proceed by induction on formula complexity. The atomic case follows immediately by the way we defined the distance between functions and from Lemma~\ref{lemma:iso_atomic}, while the propositional connectives are routine. It remains to consider the existential quantifier. Suppose $\varphi(\bar{x}) = \exists y \, \psi(\bar{x}, y)$ and, as our induction hypothesis, assume that for every tuple $(\bar{g}, h) \in G^{|\bar{x}|} \times G$, the truth value of $\hat{\psi}(\mathbf{1}_{g_1},...,\mathbf{1}_{g_n}, \mathbf{1}_{h})$ relative to $(\ell^{1}(G), *)$ is either $0$ or $1$. Then
\begin{align*}
\hat{\varphi}^{\ell^{1}(G)}(\mathbf{1}_{g_1}, \ldots, \mathbf{1}_{g_n}) 
&= \inf_{\mathbf{y} \in D} \hat{\psi}^{\ell^{1}(G)}(\mathbf{1}_{g_1}, \ldots, \mathbf{1}_{g_n}, \mathbf{y}) \\
&= 
\begin{cases}
1 & \text{if for every } a \in D(\ell^{1}(G)),\ \hat\psi(\mathbf{1}_{g_1}, \ldots, \mathbf{1}_{g_n}, a) = 1, \\[6pt]
0 & \text{if there exists } a \in D(\ell^{1}(G)),\ \hat\psi(\mathbf{1}_{g_1}, \ldots, \mathbf{1}_{g_n}, a) = 0. \qedhere
\end{cases}
\end{align*} 
\end{proof} 

\begin{corollary}\label{inf=min} Let $G$ be a discrete group. If $g_1,...,g_n \in G$ then for every $\mathcal{L}_{\grp}$-formula $\psi(x_1,...,x_n,y)$,  $(\ell^1(G),*) \models \inf_{\mathbf{y}\in D}\hat\psi(\mathbf{1}_{g_1},...,\mathbf{1}_{g_n},\mathbf{y})$ if and only if  $(\ell^{1}(G),*) \models \hat{\psi}(\mathbf{1}_{g_1},...,\mathbf{1}_{g_n},\mathbf{1}_g) = 0$ for some $g\in G$. 
\end{corollary}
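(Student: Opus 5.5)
The plan is to combine Lemma \ref{lemma:01} with the way the infimum over the definable set $D$ is evaluated in $(\ell^1(G),*)$. I read the statement ``$(\ell^1(G),*)\models \inf_{\mathbf{y}\in D}\hat\psi(\ldots)$'' as saying that this infimum takes the value $0$. Since $D$ is definable (Proposition \ref{def-atoms}), the infimum is a legitimate formula. In $\ell^1(G)$ the set $D(\ell^1(G))$ is exactly $\{\mathbf{1}_g : g\in G\}$, because the atoms are the characteristic functions of singletons and the measure is counting measure. Hence
\begin{equation*}
\inf_{\mathbf{y}\in D}\hat\psi^{\ell^1(G)}(\mathbf{1}_{g_1},\ldots,\mathbf{1}_{g_n},\mathbf{y}) \;=\; \inf_{g\in G}\hat\psi^{\ell^1(G)}(\mathbf{1}_{g_1},\ldots,\mathbf{1}_{g_n},\mathbf{1}_g).
\end{equation*}

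Next I apply Lemma \ref{lemma:01} to the $\mathcal{L}_{\grp}$-formula $\psi(x_1,\ldots,x_n,y)$ with parameters $g_1,\ldots,g_n,g$. Every term in the infimum on the right lies in $\{0,1\}$. An infimum of a set of values drawn from $\{0,1\}$ equals $0$ exactly when some member is $0$, since otherwise all members are $1$ and the infimum is $1$. So the infimum is attained, and it is $0$ if and only if $\hat\psi(\mathbf{1}_{g_1},\ldots,\mathbf{1}_{g_n},\mathbf{1}_g)=0$ for some $g\in G$. That is exactly the claim.

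The only point needing care is the identification $D(\ell^1(G))=\{\mathbf{1}_g:g\in G\}$, so that quantifying over the zeroset $D$ really ranges over the atoms and nothing else. This follows from the definition of atom together with the zeroset characterization established in Proposition \ref{def-atoms}: atoms satisfy $\varphi_n=0$ for all $n$, and anything with $\varphi_n=0$ for all $n$ lies at distance $0$ from the closed set of atoms. Everything else is immediate, and I expect no real obstacle.
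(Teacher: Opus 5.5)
Your proposal is correct and matches the paper, which derives the corollary directly from Lemma \ref{lemma:01}: since $D(\ell^1(G))=\{\mathbf{1}_g : g\in G\}$ (this is the bijection recorded in Lemma \ref{lemma:iso_atomic}), the infimum is taken over values in $\{0,1\}$, so it is $0$ exactly when some instance $\hat\psi(\mathbf{1}_{g_1},\ldots,\mathbf{1}_{g_n},\mathbf{1}_g)$ equals $0$. Your explicit check that quantifying over the definable set $D$ ranges exactly over the atoms simply spells out what the paper's one-line proof leaves implicit.
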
 
\begin{proof}
Directly from Lemma \ref{lemma:01}.
\end{proof}

\begin{lemma}\label{lemma:good} Fix a discrete group $G$. If $g_1,...,g_n \in G$ then for any $\mathcal{L}_{\grp}$-formula $\varphi(x_1,...,x_n)$, 
\begin{equation*} (G,\cdot) \models \varphi(g_1,...,g_n) \Longleftrightarrow (\ell^{1}(G),*) \models \hat{\varphi}(\mathbf{1}_{g_1},...,\mathbf{1}_{g_n}) = 0. \
\end{equation*} 
\end{lemma}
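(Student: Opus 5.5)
We argue by induction on the complexity of $\varphi$, proving the equivalence simultaneously for all tuples $g_1,\dots,g_n \in G$. Lemma \ref{lemma:01} guarantees that $\hat\varphi(\mathbf{1}_{g_1},\dots,\mathbf{1}_{g_n})$ takes only the values $0$ and $1$. So the statement is equivalent to: the value is $0$ exactly when $G\models\varphi(\bar g)$, and $1$ otherwise. It is this two-valued form that we carry through the induction.

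\emph{Terms.} First we show, by induction on terms, that $\gamma(t)(\mathbf{1}_{g_1},\dots,\mathbf{1}_{g_n}) = \mathbf{1}_{t^G(g_1,\dots,g_n)}$. For a variable this is trivial. For a product $t_1\cdot t_2$, Lemma \ref{lemma:iso_atomic} gives $\mathbf{1}_{a}*\mathbf{1}_{b}=\mathbf{1}_{ab}$.

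\emph{Atomic formulas.} For $t_1=t_2$, the value $d(\mathbf{1}_a,\mathbf{1}_b)$ is $0$ if $a=b$ and $2$ otherwise. To stay in $\{0,1\}$ as Lemma \ref{lemma:01} asserts, we use the paper's normalized metric, dividing by $2$. Either way, the value is $0$ iff $a=b$, i.e.\ iff $G\models t_1(\bar g)=t_2(\bar g)$.

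\emph{Connectives.} For $\wedge$, $\max\{\hat\phi,\hat\psi\}=0$ iff both values are $0$, iff both formulas hold in $G$ by the induction hypothesis. For $\neg$, since the values lie in $\{0,1\}$, $1-\hat\psi=0$ iff $\hat\psi=1$, iff $\hat\psi\neq 0$, iff $G\not\models\psi(\bar g)$.

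\emph{Existential quantifier.} For $\varphi=\exists y\,\psi(\bar x,y)$, Corollary \ref{inf=min} says the infimum over $D$ is $0$ iff $\hat\psi(\bar{\mathbf{1}}_{g},\mathbf{1}_h)=0$ for some $h\in G$. This step uses that $D(\ell^1(G))$ consists exactly of the elements $\mathbf{1}_h$ with $h\in G$. By the induction hypothesis applied to the tuple $(\bar g,h)$, this holds iff $G\models\psi(\bar g,h)$ for some $h$, i.e.\ iff $G\models\varphi(\bar g)$.

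The main obstacle is this quantifier step. One must ensure the infimum over the definable set $D$ is actually attained at a genuine atom, so that an infimum of $0$ is witnessed rather than merely approached. Lemma \ref{lemma:01} and Corollary \ref{inf=min} supply this: the values at atoms are discrete ($0$ or $1$), so an infimum of $0$ forces some atom to give exactly $0$. Apart from this, the only care needed is to keep the metric normalization consistent with $\{0,1\}$-valuedness, which the negation clause relies on.
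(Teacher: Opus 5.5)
Your proof is correct and follows the paper's argument: induction on formula complexity, with the atomic case handled by Lemma \ref{lemma:iso_atomic} and the existential case by Corollary \ref{inf=min}. Your explicit term induction, and your remark that the negation clause needs the $\{0,1\}$-valuedness from Lemma \ref{lemma:01}, make precise what the paper calls ``routine.'' That remark requires the metric normalized by $\tfrac12$, since otherwise a false atomic formula has value $2$ and its negation gets value $1-2=-1\neq 0$.
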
 

\begin{proof} Again, we proceed by induction on formula complexity. The atomic case follows immediately from the way we defined the distance between functions and Lemma~\ref{lemma:iso_atomic}, propositional connectives are routine. We again check the existential quantification case.  Suppose $\varphi(\bar{x}) = \exists y \, \psi(\bar{x}, y)$ and, as our induction hypothesis, assume that for every tuple $(\bar{g}, h) \in G^{|\bar{x}|} \times G$, 
\begin{equation*} 
G \models \psi(g_1,...,g_n,h) \Longleftrightarrow (\ell^{1}(G),*) \models \hat{\psi}(\mathbf{1}_{g_1},...,\mathbf{1}_{g_n},\mathbf{1}_h) =0.
\end{equation*} 
Then 
\begin{align*}
G \models \varphi(g_1,...,g_n) &\Longleftrightarrow G \models \exists y \psi(g_1,...,g_n,y) \\ 
&\Longleftrightarrow  G \models \psi(g_1,...,g_n,h) \text{ for some $h\in G$}\\
&\Longleftrightarrow  (\ell^{1}(G),*) \models \hat{\psi}(\mathbf{1}_{g_1},...,\mathbf{1}_{g_n},\mathbf{1}_{h}) = 0 \text{ for some $h\in G$}\\
&\Longleftrightarrow (\ell^{1}(G),*) \models \inf_{\mathbf{y} \in D} \hat{\psi}(\mathbf{1}_{g_1},...,\mathbf{1}_{g_n},\mathbf{y}) = 0 \\
&\Longleftrightarrow (\ell^{1}(G),*) \models  \hat{\varphi}(\mathbf{1}_{g_1},...,\mathbf{1}_{g_n}) = 0. 
\end{align*} 
The fourth $(\Longleftrightarrow)$ follows from Corollary \ref{inf=min}.
\end{proof} 

\begin{theorem}  Fix discrete groups $G,H$ such that $G \subseteq H$. If $(\ell^{1}(G),*) \preceq (\ell^{1}(H),*)$, then $(G,\cdot) \preceq (H,\cdot)$. 
\end{theorem}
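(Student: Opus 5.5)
The plan is to chain Lemma~\ref{lemma:good} for $G$ and for $H$ through the elementarity of $(\ell^{1}(G),*)$ in $(\ell^{1}(H),*)$. Fix an $\mathcal{L}_{\grp}$-formula $\varphi(x_1,\dots,x_n)$ and elements $g_1,\dots,g_n \in G$. We want to show
\[
G \models \varphi(\bar g) \iff H \models \varphi(\bar g).
\]

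First we need the inclusion to be the right one. The hypothesis $(\ell^{1}(G),*) \preceq (\ell^{1}(H),*)$ is understood via the canonical inclusion $\ell^{1}(G)\subseteq \ell^{1}(H)$, extending functions by zero. This inclusion sends $\mathbf{1}_g$ to $\mathbf{1}_g$, and it is compatible with convolution because $G$ is a subgroup of $H$.

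Next, observe that $\hat{\varphi}$ is a genuine $\mathcal{L}_{\cnv}$-formula, possibly with quantifiers relativised to $D$. This is legitimate because, by Proposition~\ref{def-atoms}, $D$ is $\emptyset$-definable by the \emph{same} sequence of formulas $\varphi_n$ in every Banach lattice $L^1(X,\mathcal{B},\mu)$. So $\inf_{\mathbf{y}\in D}$ is a definable predicate, built uniformly from the language.

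Elementary substructures preserve the values of definable predicates with parameters in the smaller structure. This holds because a definable predicate is a uniform limit of formulas, and each formula keeps its value along an elementary embedding. Hence
\[
\hat\varphi^{\ell^{1}(G)}(\mathbf{1}_{g_1},\dots,\mathbf{1}_{g_n}) = \hat\varphi^{\ell^{1}(H)}(\mathbf{1}_{g_1},\dots,\mathbf{1}_{g_n}).
\]
Now apply Lemma~\ref{lemma:good} once in $G$ and once in $H$:
\[
G\models\varphi(\bar g) \iff \hat\varphi^{\ell^1(G)}(\mathbf{1}_{\bar g})=0 \iff \hat\varphi^{\ell^1(H)}(\mathbf{1}_{\bar g})=0 \iff H\models\varphi(\bar g).
\]

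The main point needing care is the step about quantifying over $D$. We must confirm that $\inf_{\mathbf{y}\in D}$ is interpreted, in both structures, as the infimum over that structure's own set of atoms. This follows because $D$'s defining predicate $\mathrm{dist}(\mathbf{y},D)$ is given by the same formula in both structures and is preserved under the elementary inclusion. Moreover, $D(\ell^1(G)) = D(\ell^1(H))\cap \ell^1(G)$, since atoms of $\ell^1(H)$ lying in $\ell^1(G)$ are exactly the $\mathbf{1}_g$ with $g\in G$. With these facts the standard result that relativised quantifiers over a definable set are definable predicates, as in \cite{BBHU2008}, completes the argument.
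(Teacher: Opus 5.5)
Your argument is correct and is essentially the paper's. The paper runs the same transfer through the Tarski--Vaught test: it checks only formulas $\exists y\,\varphi(\bar g,y)$ and uses Corollary~\ref{inf=min} to extract a witness $g_\star\in G$. Since $\inf_{\mathbf y\in D}\hat\varphi$ is exactly $\widehat{\exists y\,\varphi}$, that chain is your equivalence $G\models\psi(\bar g)\iff\hat\psi^{\ell^1(G)}(\mathbf{1}_{\bar g})=0\iff\hat\psi^{\ell^1(H)}(\mathbf{1}_{\bar g})=0\iff H\models\psi(\bar g)$, specialized to existential $\psi$. Your direct version mirrors the paper's proof of Corollary~\ref{interpretgroup}, now with parameters, and makes the Tarski--Vaught detour unnecessary. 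Your remarks on the uniform definability of $D$ and on $D(\ell^1(G))=D(\ell^1(H))\cap\ell^1(G)$ make explicit a point the paper leaves implicit.
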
 

\begin{proof}  We prove the statement via Tarski-Vaught. Fix a $\mathcal{L}_{\grp}$-formula $\varphi(x_1,...,x_n,y)$ and elements $g_1,...,g_n \in G$. Then, 
\begin{align*} 
H \models \exists y\varphi(g_1,...,g_n,y) 
&\Longrightarrow H \models \varphi(g_1,...,g_n,h) \text{ for some $h\in H$}\\
&\Longrightarrow (\ell^{1}(H),*) \models \hat{\varphi}(\mathbf{1}_{g_1},...,\mathbf{1}_{g_n},\mathbf{1}_{h}) = 0 \text{ for some $h\in H$}\\ 
&\Longrightarrow (\ell^{1}(H),*) \models \inf_{\mathbf{y}\in D}\hat{\varphi}(\mathbf{1}_{g_1},...,\mathbf{1}_{g_n},\mathbf{y})\\
&\Longrightarrow (\ell^{1}(G),*) \models \inf_{\mathbf{y}\in D}\hat{\varphi}(\mathbf{1}_{g_1},...,\mathbf{1}_{g_n},\mathbf{y})\\
&\Longrightarrow (\ell^{1}(G),*) \models \hat{\varphi}(\mathbf{1}_{g_1},...,\mathbf{1}_{g_n},\mathbf{1}_{g_{\star}}) = 0 \text{ for some $g_{\star}\in G$}\\ 
&\Longrightarrow G \models \varphi(g_1,...,g_n,g_{\star}) \text{ for some $g_{\star}\in G$}\\
&\Longrightarrow G \models \exists y\varphi(g_1,...,g_n,y). 
\end{align*} 
The second and sixth $(\Rightarrow)$ follow from Lemma \ref{lemma:good}. 
The third and fifth $(\Rightarrow)$ follows from Corollary \ref{inf=min}.
\end{proof}

\begin{corollary}\label{interpretgroup} Fix discrete groups $G,H$. If $(\ell^{1}(G),*) \equiv (\ell^{1}(H),*)$, then $(G,\cdot) \equiv (H,\cdot)$. 
\end{corollary}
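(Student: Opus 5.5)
The plan is to derive Corollary~\ref{interpretgroup} from Lemma~\ref{lemma:good} via the uniform translation $\varphi \mapsto \hat{\varphi}$, applied to sentences. Fix an $\mathcal{L}_{\grp}$-sentence $\varphi$ with no free variables. Its translation $\hat{\varphi}$ is an $\mathcal{L}_{\cnv}$-sentence, but it quantifies over the definable set $D$ of atoms. By Proposition~\ref{def-atoms}, $D$ is defined by the same formulas $\varphi_n$ in every Banach lattice of the form $L^1(X,\mathcal{B},\mu)$, in particular in both $\ell^1(G)$ and $\ell^1(H)$. Quantification over $D$ therefore gives a definable predicate over $\emptyset$, that is, a uniform limit of genuine $\mathcal{L}_{\cnv}$-formulas. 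Its value in any model is determined by the theory, so elementarily equivalent structures assign $\hat{\varphi}$ the same value.

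The step that needs care is this last point. Because $(\ell^1(G),*)\equiv(\ell^1(H),*)$, the zeroset of the definable predicate $\operatorname{dist}(\mathbf{x},D)$ in $\ell^1(H)$ is exactly the set of atoms of $\ell^1(H)$. Proposition~\ref{def-atoms} gives this, since the same sequence of formulas works in both structures. Consequently the predicate $\inf_{\mathbf{y}\in D}\psi$ is expressed by a single definable predicate, the same one in both structures, and it is interpreted correctly in each. This is the standard fact that, for a $\emptyset$-definable set, $\inf_{\mathbf{y}\in D}\psi(\mathbf{y})$ equals a definable predicate built from $\psi$ and $\operatorname{dist}(\mathbf{y},D)$ (see \cite[Proposition 9.19]{BBHU2008}). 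By induction on complexity, every $\hat{\varphi}$ is then a $\emptyset$-definable predicate whose defining data depend only on $\varphi$. Hence $\hat{\varphi}^{\ell^1(G)}=\hat{\varphi}^{\ell^1(H)}$.

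With this in hand, the conclusion follows from Lemma~\ref{lemma:good} with $n=0$:
\begin{equation*}
G\models\varphi \iff \hat{\varphi}^{\ell^1(G)}=0 \iff \hat{\varphi}^{\ell^1(H)}=0 \iff H\models\varphi.
\end{equation*}
Since $\varphi$ was arbitrary, $(G,\cdot)\equiv(H,\cdot)$.

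There is an alternative route through the preceding theorem on elementary substructures, using a common elementary extension, but it would require the extensions to be of the form $\ell^1(K)$, which need not hold. The direct sentence-by-sentence argument avoids this issue, so I would use it.
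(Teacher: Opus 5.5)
Your proof is correct and follows the same route as the paper. Both apply Lemma~\ref{lemma:good} to sentences and transfer the value of $\hat{\varphi}$ across the elementary equivalence. The paper does this in one line, relying implicitly on its earlier remark that quantifying over the $\emptyset$-definable set $D$ gives definable predicates; your extra paragraph makes that uniformity explicit.
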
 

\begin{proof} Fix an $\mathcal{L}_{\grp}$-sentence $\varphi$. Then by Lemma \ref{lemma:good}, 
\begin{align*} 
G \models \varphi &\Longleftrightarrow (\ell^{1}(G),*) \models \hat{\varphi} = 0 \Longleftrightarrow (\ell^{1}(H),*) \models \hat{\varphi} = 0 \Longleftrightarrow H \models \varphi.  \qedhere
\end{align*} 
\end{proof} 

\subsection{Group equivalence implies convolution equivalence modulo some saturation.} We first prove that if $(G,\cdot)$ and $(H,\cdot)$ are $\omega$-saturated models of $T$ such that $(G,\cdot) \preceq (H,\cdot)$, then $(\ell^{1}(G),*) \preceq (\ell^{1}(H),*)$. As a corollary, if $G$ and $H$ are $\omega$-saturated models of $T$, then $(\ell^{1}(G),*)  \equiv (\ell^{1}(H),*)$. The following variant of the Tarski-Vaught theorem for continuous logic will be quite useful (\cite[Proposition 4.5]{BBHU2008}). 

\begin{fact}[Tarski-Vaught]\label{tarski-vaught}
Let $S$ be any set of $\mathcal{L}$-formulas that is dense with respect to logical distance. 
Suppose $M$ and $N$ are $\mathcal{L}$-structures with $M \subseteq N$. 
Then the following statements are equivalent:
\begin{enumerate}
    \item $M \preceq N$;
    \item For every $\mathcal{L}$-formula $\phi(x_1, \ldots, x_n, y)$ in $S$ and all $a_1, \ldots, a_n \in M$,
    \[
    \inf \bigl\{ \phi^N(a_1, \ldots, a_n, b) \mid b \in N \bigr\}
    = 
    \inf \bigl\{ \phi^N(a_1, \ldots, a_n, c) \mid c \in M \bigr\}.
    \]
\end{enumerate}
\end{fact}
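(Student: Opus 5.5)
The plan is to prove both implications directly, with an induction on formulas for the nontrivial direction; the density hypothesis on $S$ is needed only to pass from formulas in $S$ to arbitrary formulas. Throughout, $M \subseteq N$ means that $M$ is a substructure of $N$. Hence every atomic formula (a basic predicate or the metric applied to terms) takes the same value in $M$ and in $N$ at tuples from $M$, since terms are computed identically.

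For (1)$\Rightarrow$(2), fix $\phi(\bar x,y)\in S$ and $\bar a\in M$. Elementarity applied to the formula $\inf_y\phi$ and then to $\phi$ gives
\begin{align*}
\inf_{b\in N}\phi^N(\bar a,b) = \Bigl(\inf_y\phi\Bigr)^N(\bar a) = \Bigl(\inf_y\phi\Bigr)^M(\bar a) = \inf_{c\in M}\phi^M(\bar a,c) = \inf_{c\in M}\phi^N(\bar a,c).
\end{align*}

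For (2)$\Rightarrow$(1), I would first upgrade (2) from $S$ to all $\mathcal{L}$-formulas. Let $\phi$ be arbitrary and $\epsilon>0$. By density there is $\psi\in S$ with $|\phi-\psi|\le\epsilon$ at every tuple of every $\mathcal{L}$-structure, and in particular in $N$. Then both $\inf_{b\in N}\phi^N(\bar a,b)$ and $\inf_{c\in M}\phi^N(\bar a,c)$ lie within $\epsilon$ of the corresponding infima for $\psi$. Those infima for $\psi$ are equal by (2), so the two infima for $\phi$ differ by at most $2\epsilon$. Letting $\epsilon\to0$ gives equality.

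Next I would show by induction on the complexity of $\phi(\bar x)$ that $\phi^M(\bar a)=\phi^N(\bar a)$ for every tuple $\bar a$ from $M$.
\begin{itemize}
\item The atomic case holds because $M$ is a substructure of $N$.
\item If $\phi=u(\phi_1,\dots,\phi_k)$ for a continuous connective $u$, the claim follows immediately from the induction hypothesis.
\item For $\phi=\inf_y\psi(\bar x,y)$, I compute
\begin{align*}
\phi^M(\bar a)=\inf_{c\in M}\psi^M(\bar a,c)=\inf_{c\in M}\psi^N(\bar a,c)=\inf_{b\in N}\psi^N(\bar a,b)=\phi^N(\bar a).
\end{align*}
The second equality is the induction hypothesis applied to $(\bar a,c)$, and the third is the upgraded form of (2).
\item The case $\sup_y\psi$ reduces to the previous one, since $\sup_y\psi=1-\inf_y(1-\psi)$ and the induction hypothesis applies to $1-\psi$. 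Equivalently, one can repeat the argument using (2) for $1-\psi$.
\end{itemize}

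The main point needing care is the upgrade step. Logical distance must be understood uniformly over all structures, or at least over $N$, so that approximating $\phi$ by $\psi\in S$ moves both infima by at most $\epsilon$ at once. Once that uniformity is in place, the rest is routine bookkeeping.
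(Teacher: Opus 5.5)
Your proof is correct: it gives the forward direction by applying elementarity to $\inf_y\phi$ and to $\phi$, and the converse by first extending (2) to all formulas via uniform density in logical distance and then inducting on formulas, with the quantifier step coming from the extended (2). The paper gives no proof of this Fact and only cites \cite[Proposition 4.5]{BBHU2008}, and your argument is the standard proof of that result.
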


The main technical lemma of the section is the following. 

\begin{lemma}\label{lemma:auto} Suppose that $(G,\cdot) \preceq (H,\cdot)$ and for every $a_1,...,a_n \in G$ and $b_1,...,b_m \in H$, there exists an automorphism $\sigma\colon H \to H$ such that for each $i \leq n$, $\sigma(a_i) = a_i$ and for each $j \leq m$, $\sigma(b_j) \in G$. Then $(\ell^{1}(G),*) \preceq (\ell^{1}(H),*)$. 
\end{lemma}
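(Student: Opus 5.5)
We verify the Tarski--Vaught criterion of Fact \ref{tarski-vaught} for the inclusion $\ell^{1}(G) \subseteq \ell^{1}(H)$, obtained by extending functions on $G$ by zero on $H \setminus G$. We take $S$ to be the set of all $\mathcal{L}_{\cnv}$-formulas. First, this inclusion is an $\mathcal{L}_{\cnv}$-embedding. It is an isometric lattice embedding. Convolution is also preserved: for $f,g$ supported in $G$, the function $f*g$ computed in $H$ is supported in $G\cdot G = G$ and agrees with the convolution in $G$, since $G$ is a subgroup.

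Fix a formula $\phi(\mathbf{x}_1,\dots,\mathbf{x}_n,\mathbf{y})$, parameters $f_1,\dots,f_n \in \ell^{1}(G)$, and a witness $h \in \ell^{1}(H)$. The inequality ``$\leq$'' between the infima over $\ell^1(H)$ and over $\ell^1(G)$ is trivial. For the other direction, fix $\epsilon>0$. We want $c \in \ell^{1}(G)$ with $\phi^{\ell^1(H)}(\bar f, c) < \phi^{\ell^1(H)}(\bar f,h)+\epsilon$.

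\textbf{Step 1 (approximation).} Formulas are uniformly continuous with a modulus depending only on $\phi$. So we may choose $f_i' \in c_{00}(G)$ and $h' \in c_{00}(H)$ within some $\delta$ of $f_i$ and $h$, so that replacing $f_i$ by $f_i'$ and $h$ by $h'$ changes the value of $\phi$ by less than $\epsilon/3$. We may also assume $\lVert f_i' \rVert \le \lVert f_i \rVert$ and $\lVert h' \rVert \le \lVert h \rVert$, for instance by truncating supports, so that everything stays in the unit ball.

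\textbf{Step 2 (automorphism).} Let $a_1,\dots,a_k$ enumerate $\bigcup_i \supp(f_i')$ and let $b_1,\dots,b_m$ enumerate $\supp(h')$. The hypothesis gives an automorphism $\sigma$ of $H$ fixing each $a_j$ and with $\sigma(b_j)\in G$ for all $j$. The automorphism $\sigma$ induces the map $\hat\sigma\colon \ell^{1}(H)\to\ell^{1}(H)$ given by $\hat\sigma(u)=u\circ\sigma^{-1}$. This map is a lattice isometry. It preserves convolution, since the counting measure is $\sigma$-invariant and $\hat\sigma(\mathbf 1_a)=\mathbf 1_{\sigma(a)}$, with $\mathbf 1_a * \mathbf 1_b=\mathbf 1_{ab}$ by Lemma \ref{lemma:iso_atomic}. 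Hence $\hat\sigma$ is an $\mathcal{L}_{\cnv}$-automorphism, and it preserves the values of all formulas. Moreover $\hat\sigma(f_i')=f_i'$, and $c:=\hat\sigma(h')$ has support in $G$, so $c \in \ell^1(G)$. Therefore
\[
\phi^{\ell^1(H)}(\bar f,c)\approx_{\epsilon/3}\phi^{\ell^1(H)}(\bar f',c)=\phi^{\ell^1(H)}(\bar f',h')\approx_{\epsilon/3}\phi^{\ell^1(H)}(\bar f,h).
\]
The first approximation uses $d(f_i,f_i')<\delta$ again. Since $\epsilon$ was arbitrary, the two infima agree, and Fact \ref{tarski-vaught} gives $(\ell^{1}(G),*)\preceq(\ell^{1}(H),*)$.

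The main point needing care is Step 2. Fixing $f_i'$ only requires fixing its finitely many support points, which is exactly why we pass to finite supports first. We must also check that $\hat\sigma$ genuinely respects every symbol, including the norm and the lattice operations; this holds because $\hat\sigma$ merely permutes coordinates. The hypothesis $(G,\cdot)\preceq(H,\cdot)$ is used only to ensure $G$ is a subgroup, so that the inclusion is a substructure embedding.
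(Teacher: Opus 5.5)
Your proposal is correct and follows essentially the same route as the paper: you apply Tarski--Vaught, approximate by finitely supported functions, and transport the witness into $\ell^1(G)$ via the automorphism $\hat\sigma$ of $\ell^1(H)$ induced by $\sigma$. You also spell out details the paper leaves implicit, namely that the inclusion is an $\mathcal{L}_{\cnv}$-embedding, that $\hat\sigma$ respects every symbol, and that truncation keeps everything in the unit ball.
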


\begin{proof}
 We use Tarski-Vaught to prove $(\ell^{1}(G),*) \preceq (\ell^{1}(H),*)$, so fix a formula $\varphi(\bar{x},y)$. Choose $f_1,...,f_n \in \ell^{1}(G)$. Suppose that $\inf_{y \in \ell^{1}(H)}\varphi^{\ell^{1}(H)}(f_1,...,f_n,y) = r$.  Choose $\epsilon>0$ and let $h\in \ell^1(H)$ be such that $\varphi^{\ell^{1}(H)}(f_1,...,f_n,h) < r+\epsilon$. Since that map $\varphi$ is uniformly continuous, there exists some $\delta(\epsilon)$ such that if $\max_{i \leq n} \{d(t_i,s_i),d(v,w)\} < \delta(\epsilon)$ then $|\varphi^{\ell^{1}(H)}(t_1,...,t_n,v) - \varphi^{\ell^{1}(H)}(s_1,...,s_n,w)| < \epsilon$. Using metric density of functions with finite support, we may find $\tilde{f}_{1},...,\tilde{f}_{n},\tilde{h}$ such that 
\begin{enumerate} 
    \item For each $i \leq n$, $\supp(\tilde{f}_i) \subseteq G$ and $|\supp(\tilde{f}_{i})| < \aleph_0$. 
    \item Also $\supp(\tilde{h}) \subseteq H$ and $|\supp(\tilde{h})| < \aleph_0$.
    \item  $\max_{i \leq n} \{d(f_i,\tilde{f}_i),d(h,\tilde{h})\} < \delta(\epsilon)$ and so $\varphi^{\ell^{1}(H)}(\tilde{f}_{1},...,\tilde{f}_{n},\tilde{h}) < r + 2\epsilon$.
\end{enumerate}
Fix $\tilde{f}_{1},...,\tilde{f}_{n},\tilde{h}$ as above. Consider $A_0 = \bigcup_{i \leq n} \supp(\tilde{f}_i)$. Let $B = \{b_1,...,b_{k}\} = \supp(\tilde{h})$. By our hypothesis, there is an automorphism $\sigma\colon H \to H$ such that $\sigma$ fixes $A_0$ pointwise and maps $B$ into $G$. Notice that $\sigma$ induces an automorphism $\hat{\sigma}$ of $\ell^{1}(H)$ via 
\begin{equation*}
    \hat{\sigma}\left( \sum_{b \in H} r_{b}\mathbf{1}_{b} \right) = \sum_{b \in H} r_{b}\mathbf{1}_{\sigma(b)}. 
\end{equation*}
Then $\hat{\sigma}(\tilde{h}) \in \ell^{1}(G)$ and by a moduli of continuity argument, 
\begin{align*}
    \varphi^{\ell^{1}(H)}(\tilde{f}_1,...,\tilde{f}_n,\tilde{h}) &= \varphi^{\ell^{1}(H)}(\tilde{f}_1,...,\tilde{f}_n,\hat{\sigma}(\tilde{h})) \\ &\approx_{\epsilon}\varphi^{\ell^{1}(H)} (f_1,...,f_n,\hat{\sigma}(\tilde{h})) < r + 3\epsilon.
\end{align*}
Since $\epsilon$ was arbitrary, $(\ell^{1}(G),*) \preceq (\ell^{1}(H),*)$ by Tarski-Vaught. 
\end{proof}

\begin{lemma}\label{lemma:next} Suppose that $G$ is $\omega$-saturated and $H$ is strongly $\aleph_0$-homogeneous. If $G \preceq H$ then $(\ell^{1}(G),*) \preceq (\ell^{1}(H),*)$. 
\end{lemma}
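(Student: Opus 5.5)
The plan is to reduce to Lemma \ref{lemma:auto}. Since $G \preceq H$ is assumed, it suffices to check the automorphism hypothesis. Fix $a_1,\dots,a_n \in G$ and $b_1,\dots,b_m \in H$; we need an automorphism $\sigma$ of $H$ that fixes each $a_i$ and sends each $b_j$ into $G$.

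First, consider the type $p(\bar y) = \tp^H(\bar b/\bar a)$. This is a complete type over the finite set $\bar a \subseteq G$. Because $G \preceq H$, every formula in $p$ is consistent with $\Th(G,\bar a)$, so $p$ is a type over $\bar a$ in the sense of $G$. By $\omega$-saturation of $G$, and since $\bar a$ is finite, $p$ is realized in $G$ by some tuple $\bar c = (c_1,\dots,c_m)$. If one wants to work with one variable at a time, realize the $m$-tuple type directly, since $\omega$-saturation realizes finite-variable types over finite parameter sets.

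Second, elementarity gives $\tp^H(\bar a \bar c) = \tp^G(\bar a\bar c)$. By the choice of $\bar c$, this equals $\tp^H(\bar a \bar b)$. So $\bar a\bar b$ and $\bar a\bar c$ are finite tuples of $H$ with the same type in $H$. Strong $\aleph_0$-homogeneity of $H$ then yields an automorphism $\sigma$ of $H$ with $\sigma(\bar a\bar b) = \bar a\bar c$. This $\sigma$ fixes each $a_i$ and maps $b_j$ to $c_j \in G$, as required. Lemma \ref{lemma:auto} now gives $(\ell^{1}(G),*) \preceq (\ell^{1}(H),*)$.

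There is no real obstacle here. The only point needing care is the transfer of the type from $H$ to $G$ in the first step, which uses $G \preceq H$ so that the type is finitely satisfiable in $G$ over $\bar a$. One should also note that strong homogeneity is applied to tuples of the same length with equal types in $H$.
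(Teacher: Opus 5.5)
Your proposal is correct and is essentially the paper's argument. The paper simply asserts that the pair satisfies the hypothesis of Lemma \ref{lemma:auto}, and your steps are exactly the verification it leaves implicit: realize $\tp^H(\bar b/\bar a)$ in $G$ by $\omega$-saturation together with $G \preceq H$, then extend the elementary map $\bar a\bar b \mapsto \bar a\bar c$ to an automorphism of $H$ by strong $\aleph_0$-homogeneity.
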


\begin{proof} Clearly the pair satisfies the hypothesis of Lemma \ref{lemma:auto} and thus the statement holds. 
\end{proof}

\begin{proposition}\label{prop:elementary} Suppose that $G$ and $H$ are $\omega$-saturated groups and $G \preceq H$. Then $(\ell^{1}(G),*) \preceq (\ell^{1}(H),*)$. 
\end{proposition}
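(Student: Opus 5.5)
The plan is to reduce Proposition \ref{prop:elementary} to Lemma \ref{lemma:next}, or more directly to Lemma \ref{lemma:auto}, by passing through a strongly $\aleph_0$-homogeneous elementary extension. The hypothesis of Lemma \ref{lemma:auto} asks for automorphisms of $H$ itself. An $\omega$-saturated $H$ need not be strongly $\aleph_0$-homogeneous, so I will first move to a big model and then come back.

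\textbf{Step 1: a homogeneous extension.} Choose $K \succeq H$ with $K$ $\omega$-saturated and strongly $\aleph_0$-homogeneous; for instance, a sufficiently saturated and homogeneous elementary extension (a monster model) works. Then $G \preceq K$ and $H \preceq K$, both $G$ and $H$ are $\omega$-saturated, and $K$ is strongly $\aleph_0$-homogeneous. By Lemma \ref{lemma:next} applied twice, $(\ell^{1}(G),*) \preceq (\ell^{1}(K),*)$ and $(\ell^{1}(H),*) \preceq (\ell^{1}(K),*)$.

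\textbf{Step 2: the inclusions are compatible.} The inclusion $G \subseteq H$ induces the inclusion $\ell^{1}(G) \subseteq \ell^{1}(H)$ (extension by zero), and this is an $\mathcal{L}_{\cnv}$-embedding. The lattice operations and the norm are computed pointwise or by summation, and convolution of functions supported in the subgroup $G$ stays supported in $G$ with the same values. This inclusion composed with $\ell^{1}(H)\subseteq \ell^{1}(K)$ is exactly the inclusion $\ell^{1}(G)\subseteq\ell^{1}(K)$.

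\textbf{Step 3: the coherence lemma.} Apply the standard fact: if $M \subseteq N \subseteq P$ with $M \preceq P$ and $N \preceq P$, then $M \preceq N$. For a formula $\varphi$ and a tuple $\bar f$ from $\ell^{1}(G)$, we have $\varphi^{\ell^{1}(G)}(\bar f) = \varphi^{\ell^{1}(K)}(\bar f) = \varphi^{\ell^{1}(H)}(\bar f)$. This yields $(\ell^{1}(G),*) \preceq (\ell^{1}(H),*)$.

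\textbf{Main obstacle.} The delicate point is Step 1: producing $K$ that is both strongly $\aleph_0$-homogeneous and an elementary extension of $H$, so that Lemma \ref{lemma:next} applies with $G$ and with $H$ in the role of the small model. This is standard (take a $\kappa$-saturated, strongly $\kappa$-homogeneous extension for some $\kappa>|H|$). Then one must check that Lemma \ref{lemma:next} only needs the small model to be $\omega$-saturated and the large one to be homogeneous. That holds: given finite $\bar a \subseteq G$ and $\bar b \subseteq K$, $\omega$-saturation of $G$ realizes $\tp(\bar b/\bar a)$ by some $\bar b' \in G$. This uses $G\preceq K$ so that the type is consistent with $\Th(G,\bar a)$. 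Strong homogeneity of $K$ then gives an automorphism fixing $\bar a$ and sending $\bar b\mapsto \bar b'$.
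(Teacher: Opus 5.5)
Your proposal is correct and is essentially the paper's own proof. You pass to a strongly $\aleph_0$-homogeneous elementary extension of $H$, apply Lemma \ref{lemma:next} once with $G$ and once with $H$ as the small model, and conclude because $(\ell^{1}(G),*)$ is a substructure of $(\ell^{1}(H),*)$ and both are elementary in the common extension. Your added check is exactly the reasoning the paper leaves implicit in Lemma \ref{lemma:next}: $\omega$-saturation of $G$ together with $G \preceq K$, plus strong homogeneity of $K$, produces the automorphisms that Lemma \ref{lemma:auto} requires.
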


\begin{proof}
Choose a model $\mathcal{U}$ such that $H \preceq \mathcal{U}$ and $\mathcal{U}$ is strongly $\aleph_0$-homogeneous. We may apply Lemma \ref{lemma:next} twice and conclude that $(\ell^{1}(H),*) \preceq (\ell^{1}(\mathcal{U}),*)$ and $(\ell^{1}(G),*) \preceq (\ell^{1}(\mathcal{U}),*)$.
Since $(\ell^{1}(G),*)$ is a substructure of $(\ell^{1}(H),*)$, we may conclude that $(\ell^{1}(G),*) \preceq (\ell^{1}(H),*)$. 
\end{proof}

\begin{theorem}\label{thm:elementaryequiv} Suppose that $G \equiv H$ and $G$ and $H$ are both $\omega$-saturated groups. Then $(\ell^{1}(G),*) \equiv (\ell^{1}(H),*)$. 
\end{theorem}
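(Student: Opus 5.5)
The plan is to reduce the statement to Proposition \ref{prop:elementary} by placing $G$ and $H$ as elementary substructures of a common $\omega$-saturated group. Since $G \equiv H$, both are models of the complete theory $T = \Th(G,\cdot)$. By the joint embedding property for models of a complete theory (equivalently, elementary amalgamation over the empty set), we can choose a model $\mathcal{U} \models T$ together with elementary embeddings $G \preceq \mathcal{U}$ and $H \preceq \mathcal{U}$. Replacing $\mathcal{U}$ by an elementary extension if necessary, we may assume $\mathcal{U}$ is $\omega$-saturated; indeed, it can even be taken strongly $\aleph_0$-homogeneous, as in the proof of Proposition \ref{prop:elementary}. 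After identifying $G$ and $H$ with their images, we have $G \preceq \mathcal{U}$ and $H \preceq \mathcal{U}$ as pure groups.

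Next, apply Proposition \ref{prop:elementary} twice: once to the pair $G \preceq \mathcal{U}$, where both groups are $\omega$-saturated, and once to $H \preceq \mathcal{U}$. This gives $(\ell^{1}(G),*) \preceq (\ell^{1}(\mathcal{U}),*)$ and $(\ell^{1}(H),*) \preceq (\ell^{1}(\mathcal{U}),*)$. It follows that $(\ell^{1}(G),*) \equiv (\ell^{1}(\mathcal{U}),*) \equiv (\ell^{1}(H),*)$, which is the claim.

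The only point requiring care is that the embeddings of $G$ and $H$ into $\mathcal{U}$ are abstract elementary embeddings rather than literal inclusions. We therefore need the fact that a group isomorphism $G \cong G'$ induces an isomorphism of convolution algebras $(\ell^{1}(G),*) \cong (\ell^{1}(G'),*)$. This is the map $\sum r_b \mathbf{1}_b \mapsto \sum r_b \mathbf{1}_{\sigma(b)}$ used in Lemma \ref{lemma:auto}: it preserves the lattice operations, the norm and the convolution, by Lemma \ref{lemma:iso_atomic} and linearity together with continuity. Thus nothing is lost by identifying $G$ and $H$ with their images in $\mathcal{U}$. I do not expect any real obstacle here, since all the work is contained in Proposition \ref{prop:elementary}.
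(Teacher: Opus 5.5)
Your proof is correct and is essentially the paper's argument: place $G$ and $H$ as elementary substructures of a common $\omega$-saturated, strongly $\aleph_0$-homogeneous group $\mathcal{U}$, then apply Proposition \ref{prop:elementary} (or directly Lemma \ref{lemma:next}) to each and chain the resulting elementary equivalences. You additionally require $\mathcal{U}$ to be $\omega$-saturated, which is what Proposition \ref{prop:elementary} literally needs, and you note that group isomorphisms induce convolution-algebra isomorphisms; these make explicit two points the paper leaves implicit.
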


\begin{proof} Again, choose a strongly $\aleph_0$-homogeneous model $\mathcal{U}$ such that $G, H \preceq \mathcal{U}$. Then by Proposition \ref{prop:elementary}, $(\ell^{1}(G),*), (\ell^{1}(H),*) \preceq (\ell^{1}(\mathcal{U}),*)$ and so $(\ell^{1}(G),*) \equiv (\ell^{1}(\mathcal{U}),*) \equiv (\ell^{1}(H),*)$. 
\end{proof}

Finally, we prove the following corollary directly from the definability of the atoms. The result, the discrete case of the main theorems of Kawada and Wendel \cite{Ka48,We51}, which was an important mathematical inspiration for this paper. 

\begin{corollary}\label{coro:automorphism} Suppose that $\phi\colon(\ell^1(G),*)\to (\ell^1(G),*)$ is an metric isomorphism of convolution algebras. Then there is $\hat \phi\colon G\to G$ a group isomorphism that induces $\phi$. 
\end{corollary}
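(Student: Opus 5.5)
The plan is to use the definability of the atoms together with the fact that an isomorphism of metric structures preserves every formula. Let $\phi\colon(\ell^1(G),*)\to(\ell^1(G),*)$ be a metric isomorphism of convolution algebras, so $\phi$ is a bijective isometry preserving $+,0,\wedge,\vee,\lVert\cdot\rVert$ and $*$. By Proposition \ref{def-atoms}, the set $D$ of atoms is $\emptyset$-definable, as the zeroset of the formulas $\varphi_n$ in the Banach lattice language. An isomorphism preserves the value of every formula, so $f\in D$ iff $\varphi_n(f)=0$ for all $n$ iff $\varphi_n(\phi(f))=0$ for all $n$ iff $\phi(f)\in D$. One can also argue directly from the lattice characterization of atoms: $\phi$ preserves positivity (since $f\ge 0$ iff $f\wedge 0=0$), order and norm, hence preserves the condition in the first lemma of Section \ref{sec:atoms}. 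Either way, $\phi$ restricts to a bijection $D\to D$, and the same holds for $\phi^{-1}$.

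Next I define $\hat\phi\colon G\to G$ by $\phi(\mathbf{1}_g)=\mathbf{1}_{\hat\phi(g)}$. This is well defined and bijective because $\phi|_D$ is a bijection of $D$ and $g\mapsto \mathbf{1}_g$ is a bijection $G\to D$. Since $\phi$ preserves $*$, Lemma \ref{lemma:iso_atomic} gives
\[
\mathbf{1}_{\hat\phi(gh)}=\phi(\mathbf{1}_g*\mathbf{1}_h)=\phi(\mathbf{1}_g)*\phi(\mathbf{1}_h)=\mathbf{1}_{\hat\phi(g)\hat\phi(h)},
\]
so $\hat\phi$ is a group isomorphism.

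It remains to check that $\hat\phi$ induces $\phi$, i.e.\ $\phi=\hat{\hat\phi}$ with $\hat{\hat\phi}(\sum r_b\mathbf{1}_b)=\sum r_b\mathbf{1}_{\hat\phi(b)}$ as in the proof of Lemma \ref{lemma:auto}. The main obstacle is linearity: the language contains $+$ but not scalar multiplication by arbitrary reals. I would handle this in the standard way. Additivity gives $\mathbb{Q}$-linearity, since $\phi(nf)=n\phi(f)$ and $n\phi(f/n)=\phi(f)$. Because $\phi$ is an isometry, it is continuous, so it is $\mathbb{R}$-linear. Hence $\phi$ and the map induced by $\hat\phi$ agree on $c_{00}(G)$. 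Both maps are continuous (indeed isometric), and $c_{00}(G)$ is dense, so they agree on all of $\ell^1(G)$.
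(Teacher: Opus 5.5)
Your proposal is correct and takes essentially the same route as the paper. Definability of the atoms forces $\phi$ to permute $D$, Lemma \ref{lemma:iso_atomic} turns that permutation into a group automorphism $\hat\phi$, and density of $c_{00}(G)$ finishes the argument. Your derivation of $\mathbb{R}$-linearity of $\phi$ from additivity plus continuity makes explicit a step the paper uses only implicitly when it extends ``linearly on $span(\mathcal{A})$''.
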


\begin{proof}
 Let $\mathcal{A}$ be the collection of atoms of $\ell^1(G)$. Since $\mathcal{A}$ is a definable set, $\phi(\mathcal{A})=\mathcal{A}$ and it is a bijection. Furthermore, since $(\mathcal{A},*)$ is isomorphic to $(G,\cdot)$, the map $\phi$ restricted to $(\mathcal{A},*)$ induces a isomorphism of groups $\hat \phi\colon G\to G$. The map $\hat \phi$ determines $\phi$ on $\mathcal{A}$, which can be extended linearity on $span(\mathcal{A})$ and by continuity on $\ell^1(G)$.   \end{proof}

\begin{question} Corollary \ref{coro:automorphism} gives a model-theoretic account of the results by Kawada and Wendel \cite{Ka48,We51} when the group $G$ is discrete. Can we find a model-theoretic proof of these results when $G$ is locally compact non-discrete?
\end{question}

\subsection{Failure of elementary equivalence for non-saturated models.} In this subsection, we prove that there exists discrete groups $G_1$ and $G_2$ such that $G_1 \equiv G_2$ but the associated convolution algebras are not elementary equivalent, i.e., $(\ell^{1}(G_1),*) \not \equiv ( \ell^{1}(G_2),*)$. This follows from the observation that \emph{amenability} is first-orderizable in the language of convolution algebras, but not in the language of pure groups. Keller \cite{Keller1972} introduced the notation of \emph{uniformly amenable groups}. A discrete group is uniformly amenable if every ultrapower of $G$ remains amenable. Therefore, any non-uniformly amenable group allows us to construct a pair of groups which are elementary equivalent, but that equivalence does not transfer to the associated convolution algebras.

\begin{fact}\label{fact:notequiv} There exists two discrete groups $G_1$ and $G_2$ such that $G_1 \equiv G_2$ as pure groups, but $G_1$ is amenable while $G_2$ is not amenable. 
\end{fact}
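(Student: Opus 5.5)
We will take $G_1$ to be an amenable but not uniformly amenable discrete group, and $G_2$ to be a suitable ultrapower of $G_1$ that fails to be amenable. By Łoś's theorem $G_1 \equiv G_2$ as pure groups, so the whole task reduces to producing such a $G_1$. Concretely, we will use Keller's characterization \cite{Keller1972}. A discrete group $G$ is uniformly amenable iff there is a function $\phi\colon \mathbb{N}\times(0,1)\to\mathbb{N}$ with the following property: for every finite $S\subseteq G$ with $|S|\le n$ and every $\epsilon>0$, there is a F\o lner set $X$ with $|X|\le \phi(n,\epsilon)$ and $|sX\triangle X|<\epsilon|X|$ for all $s\in S$. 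If such a bound fails, then for some fixed $n,\epsilon$ there are sets $S_k$ of size at most $n$ whose minimal $\epsilon$-F\o lner sets have size greater than $k$.

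Set $G_2=\prod_{\mathcal U}G_1$ for a non-principal ultrafilter $\mathcal U$ on $\mathbb{N}$. We show $G_2$ is not amenable. Let $\bar s=(s_{k,1},\dots,s_{k,n})_{\mathcal U}$. Suppose $Y\subseteq G_2$ were a finite $\epsilon/2$-F\o lner set for $\bar s$, say with $|Y|=m$. Having a F\o lner set of size exactly $m$ for $n$ given elements is a first-order (existential) condition in $\mathcal{L}_{\grp}$: quantify over $m$ elements, require them to be distinct, and bound the counting of the symmetric differences, which is a finite Boolean combination of equations. By Łoś, for $\mathcal U$-many $k$, the group $G_1$ then has an $\epsilon$-F\o lner set of size $m$ for $S_k$. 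This contradicts $k>m$. Hence $G_2$ fails F\o lner's criterion (Definition \ref{def:FC}).

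It remains to exhibit an amenable, non-uniformly-amenable group. We will take $G_1=\bigoplus_{n}F_n$, where the $F_n$ are finite groups whose F\o lner constants are unbounded. Concretely, let $F_n$ be finite quotients of a free group of rank $2$ that form an expander family, e.g.\ $SL_2(\mathbb{Z}/p\mathbb{Z})$ with fixed Selberg/Lubotzky generators. Then $G_1$ is locally finite, hence amenable. However, for the two generators $S_n\subseteq F_n$, every set $X\subseteq G_1$ with $|sX\triangle X|<\epsilon|X|$ must have size comparable to $|F_n|$. This holds because the expansion property passes to cosets of $F_n$: the F\o lner quotient on each coset is bounded below unless the set almost fills the coset. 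So the uniform bound fails.

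The main obstacle is this last point, namely verifying the expander lower bound for subsets of the direct sum rather than of $F_n$ itself. We would handle it by decomposing $X$ along cosets of $F_n$ and averaging the boundary. Alternatively, we would cite Keller's explicit examples directly, since the paper already attributes the observation to \cite{Keller1972}.
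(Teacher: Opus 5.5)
Your proof is correct, but it shows the ultrapower is non-amenable by a different mechanism than the paper.

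\textbf{The paper's route.} The paper takes $G_1=\mathrm{Sym}_{\mathrm{fin}}(\mathbb{N})$, which is locally finite and hence amenable. It asserts that every non-trivial ultrapower of $G_1$ contains a free subgroup of rank $2$. This follows from residual finiteness of $F_2$: each ball of $F_2$ embeds injectively into a finite quotient, and that quotient sits inside some $\mathrm{Sym}(N)\le\mathrm{Sym}_{\mathrm{fin}}(\mathbb{N})$. Since amenability passes to subgroups, the ultrapower is non-amenable.

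\textbf{Your route.} You apply \L o\'s to the existential formula ``there is an $\epsilon/2$-F\o lner set of size $m$ for $\bar s$''. This proves directly that an unbounded F\o lner function forces the ultrapower to be non-amenable. That is one half of Keller's theorem; it is fully general and needs no free subgroups.

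\textbf{The cost.} Your concrete example needs a genuine expander family (Selberg's theorem, or Margulis/Lubotzky--Phillips--Sarnak). This is a far deeper input than residual finiteness of $F_2$.

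\textbf{The step you call the main obstacle is immediate.} Since $F_n$ is a direct factor, write $X=\bigsqcup_k X_k k$, with $k$ ranging over $\bigoplus_{m\neq n}F_m$ and $X_k\subseteq F_n$. Then $|sX\triangle X|=\sum_k|sX_k\triangle X_k|$ for every $s\in S_n$. Suppose $|X|\le|F_n|/2$. The edge-expansion bound $\sum_{s\in S_n}|sA\triangle A|\ge h|A|$ holds for $A\subseteq F_n$ with $|A|\le|F_n|/2$. Applying it to each $X_k$ gives $\sum_{s\in S_n}|sX\triangle X|\ge h|X|$. So for a fixed $\epsilon<h/2$, every $\epsilon$-F\o lner set for $S_n$ has more than $|F_n|/2$ elements. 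That is exactly the failure of the uniform bound you need. You should fix such an $\epsilon$ explicitly, because your ``size comparable to $|F_n|$'' claim is false for $\epsilon$ above the expansion constant.

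\textbf{You could avoid expanders even with your $G_1$.} Reduction modulo $p_n$ gives an injective homomorphism $SL_2(\mathbb{Z})\to\prod_{\mathcal U}\bigoplus_n SL_2(\mathbb{Z}/p_n\mathbb{Z})$, since a non-identity integer matrix stays non-identity modulo all large primes. As $SL_2(\mathbb{Z})$ contains free subgroups, this is exactly the paper's mechanism.
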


\begin{proof}
    As stated in the paragraph above, any group which is amenable but not uniformly amenable gives an example. A canonical example is the collection of permutations of $\mathbb{N}$ with finite support. This group is locally finite and thus amenable. On the other hand, it is not hard to show that any non-trivial ultrapower of this group contains copies of the free group on two generators as subgroup, and thus not amenable\footnote{This same argument is detailed by Goldbring in \cite{Goldbring2024}.}.
\end{proof}

\begin{proposition}\label{prop:amenable} Let $G$ be a discrete group. For every $n \in \mathbb{N}$, we define the sentence $\psi_{n}$ as  
\begin{equation*}
\psi_{n} := \sup_{\substack{\lVert f_1 \lVert = 1 \\ f_1 \geq 0}} \cdots\sup_{\substack{\lVert f_n \lVert = 1 \\ f_n \geq 0}} \inf_{\substack{\lVert g \lVert = 1 \\ g \geq 0}} \max \left\{ d(g, f_1 * g),\dots,d(g,f_n *g) \right\} = 0.
\end{equation*}
Then $G$ is amenable if and only if $\ell^{1}(G) \models \psi_{n}$ for every $n \geq 1$. 
\end{proposition}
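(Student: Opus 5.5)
The plan is to prove both directions by relating $d(g, f*g)$ to left translates of $g$. The basic identity is $(\mathbf{1}_a * g)(x) = g(a^{-1}x)$. Hence for a finitely supported positive $f=\sum_{a} c_a \mathbf{1}_a$ with $\sum c_a = 1$ we get $f*g = \sum_a c_a (\mathbf{1}_a * g)$. The triangle inequality then gives
\[
d(g,f*g) \le \sum_a c_a\, d(g,\mathbf{1}_a*g) \le \max_{a\in \supp(f)} d(g,\mathbf{1}_a * g).
\]
For general positive norm-one $f$ we approximate by such finitely supported $f'$ (truncate and renormalize). By Young's inequality and $\lVert g\rVert=1$, $d(f*g,f'*g)\le d(f,f')$, so only a controlled error is introduced.

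For the forward direction, assume $G$ is amenable. Fix positive norm-one $f_1,\dots,f_n$ and $\epsilon>0$.
\begin{enumerate}
\item Replace each $f_i$ by a finitely supported positive norm-one $f_i'$ with $d(f_i,f_i')<\epsilon$, and let $F$ be the finite union of the supports.
\item By F\o lner's criterion (Definition \ref{def:FC}(2)), choose a finite $X$ with $|aX\triangle X|/|X|<\epsilon$ for all $a\in F$.
\item Put $g=\mathbf{1}_X/|X|$. Then $\mathbf{1}_a*g=\mathbf{1}_{aX}/|X|$, so $d(g,\mathbf{1}_a*g)=|aX\triangle X|/|X|<\epsilon$.
\end{enumerate}
The display above then gives $d(g,f_i*g)<2\epsilon$ for each $i$, so the infimum in $\psi_n$ is $0$. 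The F\o lner condition is stated with left translates $gX_i$, which is exactly what left convolution needs, so no conversion is required in this direction.

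For the converse, assume $\ell^1(G)\models\psi_n$ for every $n$. I will produce a net as in Definition \ref{def:FC}(1).
\begin{enumerate}
\item Index by pairs $(F,\epsilon)$ with $F\subseteq G$ finite, ordered by inclusion and decreasing $\epsilon$.
\item Apply $\psi_{|F|}$ with $f_i=\mathbf{1}_{a_i}$ for $a_i\in F$ to get a positive norm-one $g$ with $d(g,\mathbf{1}_a*g)<\epsilon$ for all $a\in F$.
\item Approximate $g$ by a finitely supported positive norm-one $g'$ within $\epsilon$. Left translation is an isometry, so $d(g',\mathbf{1}_a*g')<3\epsilon$.
\end{enumerate}
This yields finitely supported norm-one $h_{(F,\epsilon)}$ with $\lVert \mathbf{1}_a*h-h\rVert\to 0$ for every $a$, which is the left-sided version of the required condition.

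The main (mild) obstacle is that Definition \ref{def:FC}(1) is stated with right convolution $h*\mathbf{1}_a$. To handle it, set $\check h(x)=h(x^{-1})$. Since $(h*\mathbf{1}_b)(x)=h(xb^{-1})$, a direct check gives $(\mathbf{1}_a*h)^{\vee}=\check h*\mathbf{1}_{a^{-1}}$, and $h\mapsto\check h$ is an isometry preserving support size and norm. Therefore $\lVert \check h*\mathbf{1}_{a^{-1}}-\check h\rVert=\lVert\mathbf{1}_a*h-h\rVert\to0$, and the net $\check h_{(F,\epsilon)}$ witnesses amenability. The remaining verifications are routine norm estimates.
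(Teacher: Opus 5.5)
Your proof is correct and follows the same route as the paper. For the forward direction, both use Følner averages $\mathbf{1}_X/|X|$ together with a convex-combination estimate after approximating the $f_i$ by finitely supported functions. For the converse, both plug atoms $\mathbf{1}_a$ into $\psi_n$ and approximate to obtain a finitely supported Reiter-type net.

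Your version is more careful about left versus right than the paper's written proof. The paper uses right translates $X\cdot a$ and $g*\tilde f_i$, and later $h_A*\mathbf{1}_a$, even though $\psi_n$ involves $f_i*g$. Your left Følner sets and the inversion $h\mapsto\check h$ settle this cleanly.
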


\begin{proof} 
Throughout the following argument, we let $m$ denote the standard counting measure on $G$. 

Suppose that $G$ is amenable. Fix $f_1,...,f_n \in \ell^{1}(G)$ in the positive cone with norm $1$. For each $i \leq n$, there exists some element $\tilde{f}_i$ in $\ell^{1}(G)$ such that $\tilde{f}_i$ has finite support, $\tilde{f}_i$ is in the positive cone with norm $1$, and $d(f_i,\tilde{f}_i)$ is arbitrarily small. Using uniform continuity of $*$ and the distance function, it will suffice to prove that for every $\epsilon > 0$, there exists some $g$ in the positive cone with norm $1$ such that $\max_{i \leq n}\{d(g,g*\tilde{f}_i)\} < \epsilon$. For each $\tilde{f_i}$, we let $F_i = \supp(\tilde{f}_i)$ and for each $a \in \supp(\tilde{f}_i)$, we let $r_{a}^{i}$ be the weight of $a$ in $\tilde{f}_i$, i.e., $\tilde{f}_i(a)$. We let $F = \bigcup_{i \leq n} F_i$. Note that since each $\bar{f_i}$ has finite support, the set $F$ is finite. Also note that since each $\tilde{f}_i$ is in the positive cone, $r_a^i\geq 0$ and since $\Vert \tilde{f}_i\lVert=1$ we also have $\sum_{a\in F_i}r_a^i=1$.

Since $G$ is amenable, we may fix a (right) F\o lner net $(X_i)_{i \in I}$ of $G$. Choose some set $X = X_{k}$ so that for each $a \in F$, we have that 
\begin{equation*} \frac{|X \triangle (X \cdot a)|}{|X|} < \min\left\{\frac{\epsilon}{r_{c}^{i} \cdot |F_i|}: i \leq n, c \in F_{i} \right\}.
\end{equation*}
Note that if $c\in F_i=\supp(\tilde{f}_i)$ then $r_c^i>0$, so the expression above is well defined. Then let $g = \frac{1}{|X|} \mathbf{1}_{X}$. Now, notice that for each $a \in G$, we have that 
\begin{equation*}
    g*\mathbf{1}_{a}(x) = g(x \cdot a^{-1}) = \frac{1}{|X|}\mathbf{1}_{X\cdot a}. 
\end{equation*}
Then,
\begin{equation*}
    |g - g * \mathbf{1}_a| = \frac{1}{|X|} \mathbf{1}_{X \triangle (X \cdot a)}.
\end{equation*}

Therefore, by linearity of convolution,
\begin{align*}
d(g,g * \tilde{f}_i)& =d \left(\sum_{a\in F_i}r^i_a g,g*\sum_{a\in F_i}r^i_a\mathbf{1}_a \right)\\
&= \int_{G} \left|\sum_{a \in F_i} r^i_a (g - g * \mathbf{1}_{a}) \right| dm\\
&\leq \int_{G} \sum_{a \in F_i} r^i_a \left|(g - g * \mathbf{1}_{a}) \right| dm\\ 
&= \int_{G} \sum_{a \in F_i} r^i_a  \left| \left(\frac{1}{|X|} \mathbf{1}_{X \triangle (X \cdot a)}\right) \right|dm \\
&= \sum_{a \in F_i} r^i_a \left( \frac{|X \triangle (X \cdot a)|}{|X|} \right) < \sum_{a \in F_i} \frac{\epsilon}{|F_i|} = \epsilon. 
\end{align*}

For the other implication, suppose that $\ell^{1}(G) \models \psi_{n}$ for every $n \geq 1$. Let $\Gamma$ be the directed set indexed by the finite subsets of $G$ and ordered by inclusion. For each finite subset $A$ of $G$, since $G \models \psi_{|A|}$, we can find $h_{A} \in \ell^{1}(G)$ such that $\lVert h_{A} \rVert = 1$, $|\supp(h_{A})|$ is finite, and for every $a \in A$, $\lVert h_{A} * \mathbf{1}_{a} - h_{A} \rVert_{\ell^{1}(G)} < \frac{1 }{|A|}$. A straightforward computation shows that this net of finite means witnesses the amenability of $G$. 
\end{proof}

\begin{theorem}\label{thm:failure} There exist countable discrete groups $G_1$ and $G_2$ such that $G_1 \equiv G_2$ but the associated convolution algebras are not elementary equivalent, that is $(\ell^{1}(G_1),*) \not \equiv (\ell^{1}(G_2),*)$.
\end{theorem}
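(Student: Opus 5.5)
The plan is to combine Fact \ref{fact:notequiv} with Proposition \ref{prop:amenable}, making sure the groups are countable. Take $G_1$ to be the group of finitely supported permutations of $\mathbb{N}$, which is countable, locally finite and hence amenable. Fix a non-principal ultrafilter $\mathcal{U}$ on $\mathbb{N}$. As in the proof of Fact \ref{fact:notequiv}, the ultrapower $G_1^{\mathcal{U}}$ contains a copy of the free group $F_2$ on two generators. Concretely, choose for each $n$ two permutations $\alpha_n,\beta_n$ of finite support whose generated subgroup has no nontrivial relation of length $\le n$; this is possible because $F_2$ is residually finite, so it embeds approximately into finite symmetric groups. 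Their classes in $G_1^{\mathcal{U}}$ then generate a free subgroup.

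Next, apply downward Löwenheim--Skolem in $G_1^{\mathcal{U}}$ to the two elements $[\alpha_n]_{\mathcal{U}}$ and $[\beta_n]_{\mathcal{U}}$. This gives a countable $G_2 \preceq G_1^{\mathcal{U}}$ that contains them, so $G_2 \equiv G_1$ and $G_2$ is countable. Because $G_2$ contains $F_2$, it is not amenable, since amenability passes to subgroups in the discrete case.

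It remains to separate the convolution algebras. By Proposition \ref{prop:amenable}, $\ell^1(G_1)\models\psi_n$ for every $n$, while $\ell^1(G_2)$ fails some $\psi_n$. Each $\psi_n$ is a single $\mathcal{L}_{\cnv}$-condition: its quantifiers range over the positive norm-one elements, which form a definable and hence quantifiable set by Lemma \ref{def-positive functions}, so the quantification is legitimate. Therefore some sentence takes value $0$ in $\ell^1(G_1)$ and a positive value in $\ell^1(G_2)$, which gives $(\ell^1(G_1),*)\not\equiv(\ell^1(G_2),*)$.

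The main obstacle is the free-subgroup claim, that is, finding finitely supported permutations with no short relations. I would justify it using residual finiteness of $F_2$: for each $n$, take a finite quotient of $F_2$ that is injective on the ball of radius $n$. The action of this quotient on itself by left multiplication, transplanted to an interval of $\mathbb{N}$ and extended by the identity elsewhere, supplies $\alpha_n$ and $\beta_n$.
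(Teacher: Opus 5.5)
Your proposal is correct and follows the paper's proof, which combines Fact~\ref{fact:notequiv} (the finitary permutations of $\mathbb{N}$ are amenable, while a nontrivial ultrapower contains $F_2$) with Proposition~\ref{prop:amenable}. Your residual-finiteness construction of the free subgroup and your downward L\"owenheim--Skolem step fill in details that the paper's one-line proof leaves implicit; in particular, the L\"owenheim--Skolem step is what makes $G_2$ countable, as the statement requires.
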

\begin{proof}
    Direct from Fact \ref{fact:notequiv} and Proposition \ref{prop:amenable}.
\end{proof}

The behavior of convolution algebras with respect to the underlying groups is reminiscent of the behavior of von Neumann algebras with respect to their underlying group as explained in \cite{GHT}, the algebra carries more information than the first order theory of the group.

\begin{question}
If $G$ and $H$ are $\omega$-back-and-forth-equivalent groups (see \cite{GHT} for details), are the convolution algebras $(\ell^1(G),*)$ and
 $(\ell^1(H),*)$ elementarily equivalent?
\end{question}

\section{Dividing Lines}

This section concerns convolution algebras and their connections to model-theoretic dividing lines. It is divided into two parts: the first deals with discrete groups, while the second deals more generally with locally compact groups. Several authors have made scattered remarks about dividing lines witnessed by either $(L^{1}(\mathbb{R}),*)$ or $(\ell^{1}(\mathbb{Z}),*)$. The first named author proved that $(L^1(\mathbb{R}),*)$ is unstable \cite{Be}. Subsequently, Farah, Hart, and Sherman \cite[Proposition 6.2]{FHS} proved that $(\ell^1(\mathbb{Z}),*)$ is unstable. Khanaki \cite{Khanaki} further demonstrated that $(\ell^1(\mathbb{Z}),*)$ has the strong order property and asked whether it is NIP \cite[Question 11.3]{Khanaki}. We prove that for a wide variety of groups, $(\ell^{1}(G),*)$ has $\mathrm{TP}_2$, and in particular $(\ell^{1}(\mathbb{Z}),*)$ has $\mathrm{TP}_2$, thus also $\mathrm{IP}$, resolving Khanaki's question.

Throughout the section, we mention that all the parameters involved in witnessing $\mathrm{TP}_2$ and $\mathrm{IP}$ are positive and of norm exactly $1$; that is, they are unit vectors in the positive part of the unit ball of $L^{1}(G)$. Also, we emphasize that the formulas which we use to witness the different dividing lines are all quantifier free whereas the formula in both \cite{FHS} and \cite{Khanaki} used to witness instability and the strict order property (respectively) uses an existential quantifier. 

\subsection{Discrete groups}

Throughout this subsection, all of the groups we will consider are discrete. We prove the following:

\begin{enumerate}
\item If $G$ is amenable, then the convolution algebra $(\ell^{1}(G),*)$ is unstable.
\item If $G$ admits a certain combinatorial family of subgroups, then the convolution algebra $(\ell^{1}(G),*)$ has $\mathrm{TP}_2$.
\item As a consequence, if $G$ has an infinite abelian subgroup, then the convolution algebra $(\ell^{1}(G),*)$ has $\mathrm{TP}_2$. In particular, if $G$ is solvable, then $(\ell^{1}(G),*)$ has $\mathrm{TP}_2$.
\end{enumerate}

We quickly prove statement (1) above to get some intuition on the basic tools that we will use in this section and then move on to proving statements (2) and (3). The following definition is well-known to be equivalent to the classical notion of stability in continuous logic \cite{byaacov2014model}.

\begin{definition} A theory $T$ is \textbf{unstable}, if there exists a model $M$ of $T$ and an $\mathcal{L}$-formula $\varphi(\bar{x}, \bar{y})$ with parameters $(\bar{a}_n,\bar{b}_m)_{n,m < \omega}$ from $M$ such that 
\begin{equation*}
    \lim_{n \to \infty} \lim_{m \to \infty} \varphi(\bar{a}_n,\bar{b}_{m}) \neq \lim_{m \to \infty} \lim_{n \to \infty} \varphi(\bar{a}_n,\bar{b}_{m}),
\end{equation*}
where both arrays converge. 
\end{definition}

\begin{proposition}\label{prop:unstable} Suppose that $G$ is an infinite amenable discrete group. Then $\Th(\ell^{1}(G),*)$ is unstable.
\end{proposition}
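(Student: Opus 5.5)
We will witness instability with the quantifier-free formula $\varphi(x,y) = d(x*y,y)$, the formula mentioned in the introduction. All parameters will be positive and of norm one. The idea is to let one family of parameters be atoms $\mathbf{1}_{a_n}$ and the other be normalized characteristic functions of F\o lner sets. The atoms translate nontrivially, while the F\o lner averages are almost invariant under translation.

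Since $G$ is infinite and amenable, Definition \ref{def:FC}(2) gives a left F\o lner net; its left version is obtained by inverting sets, as noted in the preliminaries. First, we pick a sequence of finite sets $X_m$ and distinct elements $a_n \in G$ by a joint diagonal construction. Concretely, we enumerate a countable subgroup $G_0 \leq G$, and if $G$ is uncountable we instead work inside an infinite countable subgroup. The elements $a_0,a_1,\dots$ are chosen distinct, and $X_m$ is chosen from the net so that $|a_n X_m \triangle X_m|/|X_m| < 1/m$ for all $n \leq m$. This is possible because the F\o lner condition handles finitely many elements at once. We also choose $a_n$ recursively so that $a_n X_m \cap X_m = \emptyset$ for all $m < n$. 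This is possible since $\{g : gX_m \cap X_m \neq \emptyset\} = X_m X_m^{-1}$ is finite and $G$ is infinite. The two requirements do not conflict: at stage $n$ we first choose $a_n$ avoiding the finite set $\bigcup_{m<n} X_mX_m^{-1}$ and the earlier $a_i$, and only then choose $X_n$ almost invariant under $a_0,\dots,a_n$.

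Set $f_m = \frac{1}{|X_m|}\mathbf{1}_{X_m}$. Then $\mathbf{1}_{a} * f_m = \frac{1}{|X_m|}\mathbf{1}_{aX_m}$, so
\[ d(\mathbf{1}_{a_n}*f_m, f_m) = \frac{|a_nX_m \triangle X_m|}{|X_m|}. \]
For $n \leq m$ this is less than $1/m$. For $m < n$ the two sets are disjoint, so it equals $2$. Taking $\varphi(x,y)=d(x*y,y)$ with $x=\mathbf{1}_{a_n}$ and $y=f_m$, we get $\lim_n\lim_m \varphi = 0$ and $\lim_m \lim_n \varphi = 2$, with both iterated limits existing since the inner sequences are eventually constant or tend to $0$. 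This witnesses instability in $M=\ell^1(G)$ itself. If one prefers the sequence version of F\o lner's criterion, the countable subgroup reduction is where that is used, since a subgroup of an amenable group is amenable.

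The main point needing care is the compatibility of the two conditions in the diagonal construction. Above it is resolved by ordering the choices ($a_n$ before $X_n$) and using finiteness of $X_mX_m^{-1}$. No step beyond this is expected to be difficult.
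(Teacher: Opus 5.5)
Your proposal is correct and is essentially the paper's proof: the same formula $d(x*y,y)$, atoms $\mathbf{1}_{a_n}$ paired against normalized F\o lner sets, and the same interleaved diagonal construction. The paper picks the F\o lner set first and then an element moving all sets built so far off themselves, while you pick the element first; this difference is immaterial. Your countable-subgroup reduction is also unnecessary, since the net form of F\o lner's criterion already gives, for any finite set of elements and $\epsilon>0$, a single almost-invariant finite set.
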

\begin{proof} Since $G$ is amenable, we may choose a net of finite subsets of $G$, $(X_i)_{i \in I}$, which witness F\o lner's criterion (Definition \ref{def:FC}). Since $G$ is infinite, for each finite subset $X$ of $G$, there exists some $a_{X} \in G$ such that $(a_{X} \cdot X) \cap X = \emptyset$. We now construct a sequence of elements in $\ell^{1}(G)$ which will show that $(\ell^{1}(G),*)$ is unstable. 

 \textbf{Step 0}: Choose an $i \in I$ and let $Y_0 = X_0$. Let $b_0 = a_{X_0}$. 

\textbf{Step n+1}: Suppose we have constructed $Y_i$ and $b_i$ for $0\leq i\leq n$. Choose $X_j$ such that for all $c \in \{b_0,...,b_n\}$, $|(c \cdot X_j) \triangle X_j| /|X_j| < \frac{1}{n+1}$. Let $Y_{n+1} = X_j$. Choose $b_{n+1}$ such that
  
 \begin{equation*} \left(b_{n+1} \cdot \bigcup_{m \leq n+1} Y_{m}\right) \cap \bigcup_{m \leq n+1} Y_{m} = \emptyset. 
 \end{equation*}

For $n < \omega$, let $f_{n} = \frac{1}{|Y_n|} \mathbf{1}_{Y_n}$ and $g_{n} = \mathbf{1}_{b_n}$. Consider the formula $\varphi(x,y):= \lVert x * y - y \rVert$. Then 
\begin{equation*}
    \lim_{m \to \infty} \lim_{n \to \infty} \varphi(g_m, f_n) = 0 \text{ while } \lim_{n \to \infty} \lim_{m \to \infty} \varphi(g_m, f_n) = 2.  
\end{equation*}
Thus $(\ell^{1}(G),*)$ is unstable. 
\end{proof}

Note that we proved a stronger statement: the function $\varphi(x,y)=\lVert x * y - y \rVert$ is not WAP with respect to the structure $(\ell^{1}(G),*)$.

We now focus on proving statements (2) and (3) from the beginning of the section. We begin by recalling the definition of the independence property, $\mathbf{IP}$ and the tree property of the second kind, $\mathbf{TP_{2}}$, in the context of continuous logic.

\begin{definition} A theory $T$ has the \textbf{independence property}, $\mathbf{IP}$, if there is a formula $\varphi(\bar{x}, \bar{y})$, positive real numbers $r$ and $\epsilon$, and for every $n < \omega$, there are parameters $a_1,...,a_n$ and $(b_{A})_{A \in \mathcal{P}([n])}$ from a model $M$ of $T$ such that for every $K\in \mathcal{P}([n])$
\[
\varphi(\bar{a}_i,\bar{b}_K) \leq r \text{ if } i \in K, \quad \text{and} \quad \varphi(\bar{a}_i,\bar{b}_K) \geq r + \epsilon \text{ if } i \in [n] \setminus K.
\]
\end{definition}

The following definition is \cite[Definition 2.4]{conant2016model}.

\begin{definition} A theory $T$ has the \textbf{tree property of the second kind}, $\mathbf{TP_{2}}$, if there is a formula $\varphi(\bar{x}, \bar{y})$, an integer $k > 0$, and an array of parameters $(\bar{a}_{i,j})_{i,j < \omega}$ from a model $M$ of $T$ such that 
\begin{enumerate}
    \item For all $\sigma \in \omega^{\omega}$, $\{\varphi(\bar{x},\bar{a}_{n,\sigma(n)}) = 0: n < \omega\}$ is satisfiable, 
    \item for all $n < \omega$, $\{\varphi(\bar{x},\bar{a}_{n,i}) = 0 : i < \omega\}$ is $k$-inconsistent. 
\end{enumerate}
\end{definition}

We quickly take the opportunity to notice that if $G$ witnesses some dividing line, then $(\ell^{1}(G),*)$ witnesses the continuous variant of the same dividing line. 

\begin{corollary} Let $G$ be a discrete group. If $G$ is unstable, $\mathrm{IP}$, or $\mathrm{TP}_2$, then $(\ell^{1}(G),*)$ is respectively unstable, $\mathrm{IP}$, or $\mathrm{TP}_2$. 
\end{corollary}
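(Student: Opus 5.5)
The plan is to transport the witnessing configuration from $G$ into $(\ell^{1}(G),*)$ through the translation $\varphi \mapsto \hat{\varphi}$ and the atoms. Suppose $\varphi(\bar{x},\bar{y})$ is an $\mathcal{L}_{\grp}$-formula witnessing instability, $\mathrm{IP}$, or $\mathrm{TP}_2$ in some model of $\Th(G)$. The candidate witness in the convolution algebra is $\hat{\varphi}(\bar{\mathbf{x}},\bar{\mathbf{y}})$, with parameters $\mathbf{1}_{a}$ for the group parameters $a$. By Lemma~\ref{lemma:01} and Lemma~\ref{lemma:good}, $\hat{\varphi}(\mathbf{1}_{\bar a},\mathbf{1}_{\bar b})$ is $0$ exactly when $G \models \varphi(\bar a,\bar b)$, and $1$ otherwise. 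The $\{0,1\}$-valuedness is what converts discrete patterns into continuous ones with gap $\epsilon = 1$.

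First I would arrange that the witnesses lie in $G$ itself. The properties are properties of $\Th(G)$, and only finitely many instances are needed for each finite pattern. Hence the $\mathrm{IP}$ pattern for each $n$, and the finite stages of the ladder or array, can be realized in $G$ by elementarity. For instability I would use the order property: for every $n$ there are $a_i,b_j \in G$ with $\varphi(a_i,b_j)$ holding iff $i<j$. Transferred, $\hat\varphi(\mathbf{1}_{a_i},\mathbf{1}_{b_j})$ takes values $0$ or $1$ according to $i<j$. Compactness in the continuous theory $\Th(\ell^1(G),*)$, or passing to an ultrapower, then yields an infinite ladder there. This gives the double-limit discrepancy $0$ versus $1$ in the model $\prod_{\mathcal U}(\ell^1(G),*)$. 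For $\mathrm{IP}$, take $r=0$ and $\epsilon = 1$ directly from finite patterns in $G$.

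For $\mathrm{TP}_2$ I would work with the finite version: for each $N$, an $N\times N$ array in $G$ whose paths are consistent and whose rows are $k$-inconsistent. Path consistency transfers as follows. A finite set of formulas $\{\varphi(x,a_{n,\sigma(n)})\}$ is realized by some $c\in G$, so $\hat\varphi(\mathbf{1}_c,\mathbf{1}_{a_{n,\sigma(n)}})=0$. Row $k$-inconsistency is the harder direction, because the realizer $\mathbf{x}$ in the algebra ranges over all elements, not just atoms. I would fix this by relativizing to $D$, replacing $\hat\varphi(\mathbf{x},\bar{\mathbf{y}})$ with
\[
\theta(\mathbf{x},\bar{\mathbf{y}}) = \max\{\hat\varphi(\mathbf{x},\bar{\mathbf{y}}), \operatorname{dist}(\mathbf{x},D)\}.
\]
This is a legitimate definable predicate by Proposition~\ref{def-atoms}. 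Alternatively, one could substitute $\varphi_n$-style formulas from that proof, or use the definable-set machinery with a formula approximating it. Now suppose $\theta(\mathbf{x},\bar a_{n,i_1}),\dots,\theta(\mathbf{x},\bar a_{n,i_k})$ are all $0$ in some model. Then $\mathbf{x}$ is an atom, and the sentence $\inf_{\mathbf{x}\in D}\max_j \hat\varphi(\mathbf{x},\mathbf{1}_{\bar a_{n,i_j}})$ has value $0$. By Lemma~\ref{lemma:good} (Corollary~\ref{inf=min}), this holds in $\ell^1(G)$ iff $G\models \exists x\bigwedge_j\varphi(x,a_{n,i_j})$, contradicting $k$-inconsistency. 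Everything is expressed by sentences with $\{0,1\}$ values in $(\ell^1(G),*)$, so compactness gives the infinite array in an elementary extension.

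The main obstacle is exactly this inconsistency step, together with the question of whether $\hat\varphi$ alone suffices or the relativization to atoms is required. A non-atomic $\mathbf{x}$ could make $\hat\varphi$ small even when no group element works, and inner quantifiers of $\hat\varphi$ are already relativized to $D$ by definition. So I would commit to the $D$-relativized predicate $\theta$, whose existence is justified by Proposition~\ref{def-atoms} and Fact~\ref{test-definibility}. For stability and $\mathrm{IP}$ this issue does not arise, since there only the values at the given atomic parameters matter.
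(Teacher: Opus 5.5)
Your proof is correct and is a fleshed-out version of the paper's argument, whose entire proof is ``direct from Lemmas \ref{lemma:01} and \ref{lemma:good}'': transfer the witnessing configuration through $\varphi\mapsto\hat\varphi$, use atoms $\mathbf{1}_a$ as parameters, and let the $\{0,1\}$-valuedness supply the uniform gap. Your relativization $\max\{\hat\varphi(\mathbf{x},\bar{\mathbf{y}}),\operatorname{dist}(\mathbf{x},D)\}$ for the $\mathrm{TP}_2$ rows makes explicit a point the paper's one-line proof passes over, and some device of this kind is genuinely needed: for a suitable (logically equivalent) choice of the witnessing group formula, $\hat\varphi$ alone can vanish at non-atomic elements of an ultrapower (limits of averages $\frac{1}{N}\mathbf{1}_S$) for several parameters in the same row, whereas for instability and $\mathrm{IP}$ only the values at atomic parameters matter, exactly as you say.
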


\begin{proof}
Direct from Lemmas \ref{lemma:01} and \ref{lemma:good}.
\end{proof}

The next proposition shows that if a countable group have a particular combinatorial family of subgroups, then $(\ell^{1}(G),*)$ has $\mathrm{TP}_{2}$. We will then prove a $\mathrm{TP}_{2}$ transfer theorem and work toward removing the countability assumption on $G$.

\begin{theorem}\label{prop:general}
Let $G$ be a countable discrete group. Suppose that for every natural number $k$ there exists a family of pairwise distinct (proper) amenable subgroups $(H_i)_{i \leq k}$ of $G$ and an array of elements $(b_{i,j})_{i,j \leq k}$ from $G$ such that 
\begin{enumerate}
    \item For each $\eta \colon [k] \to [k]$, $\bigcap_{i \leq k} b_{i,\eta(i)} H_{i} \neq \emptyset$. 
    \item For each $i,j,j' \leq k$, $b_{i,j} H_{i} \cap b_{i,j'} H_{i} = \emptyset$ whenever $j\neq j'$. 
\end{enumerate}
Then the formula $d(x * y, z) \dotdiv \frac{1}{2}$ has $\mathrm{TP}_{2}$. Furthermore, if one can choose the arrays of elements from $G$ such that $b_{i,0} = e$ for $i \leq k$, then $d(x *y, y)$ has $\mathrm{IP}$ relative to the theory of $(\ell^{1}(G),*)$.
\end{theorem}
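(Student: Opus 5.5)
The plan is to realize both dividing lines with atoms $\mathbf{1}_c$ in the variable $x$, and with normalized F\o lner averages on the amenable subgroups $H_i$ (and their translates) as parameters. I will first build, for each fixed $k$, a finite $k\times k$ configuration inside $(\ell^{1}(G),*)$ itself. Then I will pass to an $\omega\times\omega$ array in a sufficiently saturated model of $\Th(\ell^{1}(G),*)$ by compactness. This is legitimate because the two requirements in the definition of $\mathrm{TP}_2$ are expressed by conditions on finitely many parameters at a time, with uniform constants, and $k$-inconsistency with a fixed $k$ is a closed condition.

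Fix $k$, together with the subgroups $(H_i)_{i\le k}$ and elements $(b_{i,j})$ given by hypothesis. For each $\eta\colon[k]\to[k]$, hypothesis (1) lets me choose $c_\eta\in\bigcap_{i\le k} b_{i,\eta(i)}H_i$ and write $c_\eta=b_{i,\eta(i)}h_{\eta,i}$ with $h_{\eta,i}\in H_i$. Only finitely many $h_{\eta,i}$ occur. Since $H_i$ is amenable (and countable), F\o lner's criterion (Definition \ref{def:FC}) gives a finite $F_i\subseteq H_i$ with $|hF_i\triangle F_i|/|F_i|<\epsilon$ for all these $h=h_{\eta,i}$. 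Put $\mu_i=\frac{1}{|F_i|}\mathbf{1}_{F_i}$ and take parameters $y_{i,j}=\mu_i$ and $z_{i,j}=\mathbf{1}_{b_{i,j}}*\mu_i$; the latter is the normalized indicator of $b_{i,j}F_i$. All of these are positive of norm $1$. Then
\[
d(\mathbf{1}_{c_\eta}*y_{i,\eta(i)},z_{i,\eta(i)})=d(\mathbf{1}_{b_{i,\eta(i)}}*\mathbf{1}_{h_{\eta,i}}*\mu_i,\ \mathbf{1}_{b_{i,\eta(i)}}*\mu_i)=\frac{|h_{\eta,i}F_i\triangle F_i|}{|F_i|}<\epsilon,
\]
using that left translation by an atom is an isometry (Lemma \ref{lemma:iso_atomic}). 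Hence $d(x*y,z)\dotdiv\frac12$ vanishes along every path $\eta$ once $\epsilon\le\frac12$. For $2$-inconsistency of each row, note that $z_{i,j}$ and $z_{i,j'}$ are supported in the cosets $b_{i,j}H_i$ and $b_{i,j'}H_i$. These are disjoint by hypothesis (2), so $d(z_{i,j},z_{i,j'})=2$. If some $x$ satisfied both $d(x*y_{i,j},z_{i,j})\le\frac12$ and $d(x*y_{i,j'},z_{i,j'})\le\frac12$, then, since $y_{i,j}=y_{i,j'}$, the triangle inequality would give $d(z_{i,j},z_{i,j'})\le 1$, a contradiction. This bound is uniform in $k$ and survives to elementary extensions.

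For the $\mathrm{IP}$ clause with $b_{i,0}=e$, fix $n$ and apply the hypothesis with $k=n$, using only the columns $0$ and $1$. For each $K\subseteq[n]$, let $\eta_K(i)=0$ if $i\in K$ and $\eta_K(i)=1$ otherwise, and let $c_K\in\bigcap_i b_{i,\eta_K(i)}H_i$. The element $\mathbf{1}_{c_K}$ will play the role of $b_K$, and $\mu_i$ that of $a_i$. Choose each $F_i$ to be F\o lner for the finitely many elements $c_K$ with $i\in K$, which lie in $H_i$. Then $d(\mathbf{1}_{c_K}*\mu_i,\mu_i)<\epsilon$ for $i\in K$. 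For $i\notin K$, the function $\mathbf{1}_{c_K}*\mu_i$ is supported in $b_{i,1}H_i$, which is disjoint from $H_i\supseteq F_i$, so the distance is exactly $2$. This gives $\mathrm{IP}$ for $d(x*y,y)$ with $r=\epsilon$ and gap $2-2\epsilon$. The variable roles here differ from the $\mathrm{IP}$ definition's ordering, which is harmless.

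The main obstacle is making sure the F\o lner sets can be chosen simultaneously for all finitely many translates appearing at a given stage. This works because for fixed $k$ only finitely many $h_{\eta,i}$ occur. The other point needing care is verifying that the compactness step yields a genuine $\omega\times\omega$ $\mathrm{TP}_2$ array. The constants ($\epsilon$, the threshold $\frac12$, and the $2$-inconsistency) are independent of $k$, so the finite configurations express a consistent set of conditions in a saturated model.
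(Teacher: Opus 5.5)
Your proof is correct. The combinatorial core is the same as the paper's: atoms $\mathbf{1}_c$ as realizations, F\o lner averages on $H_i$, translates by $b_{i,j}$, the triangle inequality for rows, and shattering for IP. The difference lies in where the parameters live.

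\emph{The paper's route.} It passes to $\prod_{\mathcal D}\ell^1(G)$ and sets $f_i=[\frac{1}{|X_{i,m}|}\mathbf{1}_{X_{i,m}}]_{\mathcal D}$ for a F\o lner \emph{sequence} of $H_i$. These ultralimits are exactly invariant under left translation by $H_i$, behaving like invariant means. So paths are satisfied with distance exactly $0$, and the IP values are exactly $0$ and $2$.

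\emph{Your route.} You stay in $\ell^1(G)$ for each $k$. You pick one finite F\o lner set $F_i$ for the finitely many relevant elements $h_{\eta,i}$ (resp.\ $c_K$), and let the slack in $\dotdiv\frac12$ absorb the error $\epsilon$.

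\emph{What each buys.} Yours never uses countability, despite your parenthetical: the F\o lner net of any amenable group yields a single finite set that is $\epsilon$-invariant under finitely many given elements. So your argument proves Theorem \ref{thm:general} directly, without the L\"owenheim--Skolem reduction and Lemma \ref{lemma:TP2}. The paper's version buys exactness and the ``invariant mean in an elementary extension'' picture that the introduction emphasizes.

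\emph{One imprecision in your compactness step.} $k$-inconsistency is not in general a closed condition on parameters. What you actually transfer are the closed conditions $d(z_{i,j},z_{i,j'})=2$ and $d(y_{i,j},y_{i,0})=0$, and 2-inconsistency then follows in every model by your triangle-inequality argument. The second condition must be put into the type explicitly, since that argument uses $y_{i,j}=y_{i,j'}$. You also need, for every finite $F\subseteq\omega$ and $\sigma\colon F\to\omega$, the condition $\inf_x\max_{i\in F}\bigl(d(x*y_{i,\sigma(i)},z_{i,\sigma(i)})\dotdiv\frac12\bigr)=0$. With these, an $\aleph_1$-saturated model yields the $\omega\times\omega$ array with all infinite paths satisfiable.
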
 

\begin{proof} Fix a non-principal ultrafilter $\mathcal{D}$ on $\mathbb{N}$ and consider the ultrapower $\prod_{\mathcal{D}} (\ell^{1}(G),*)$ which is an $\aleph_1$-saturated model of $\Th(\ell^{1}(G),*)$. We show that for every positive integer $n \geq 2$, we can find a family of parameters $(f_i, g_{i,j})_{i,j \leq n}$ from $\prod_{\mathcal{D}} \ell^{1}(G)$ such that
\begin{enumerate}
\item For each function $\eta\colon [n] \to [n]$, $\{d(x * f_i, g_{i,\eta(i)}) \dotdiv \frac{1}{2}: i \leq n\}$ is satisfiable.   
\item For each $i \leq n$, $\{d(x * f_i, g_{i,j})  \dotdiv \frac{1}{2}: j \leq n \}$ is $2$-inconsistent. 
\end{enumerate} 
By compactness, this implies the statement. First, we need to consider a family of \emph{small enough} subgroups. Fix a family of amenable (proper) subgroups $(H_{i})_{i \leq n}$ and elements $(b_{i,j})_{i,j \leq n}$ of $G$ with the condition described in the statement of the theorem. 

Now we construct the parameters $(f_{i},g_{i,j})_{i,j \leq n}$ in $\prod_{D} (\ell^{1}(G),*)$. The parameter $f_i$ will correspond essentially to a \emph{mean} on $H_i$ and $g_{i,j}$ will correspond to a translation of $H_{i}$. Since $H_i$ is amenable and countable, we may choose a F\o lner sequence $(X_{i,m})_{m < \omega}$ for $H_i$. We let $f_{i} = [\frac{1}{|X_{i,m}|}\mathbf{1}_{X_{i,m}}]_{\mathcal{D}}$. We let $g_{i,j} = \mathbf{1}_{b_{i,j}} * f_{i}$ [formally speaking, $\mathbf{1}_{b_{i,j}} = \Delta(\mathbf{1}_{b_{i,j}})$ where $\Delta\colon G \to \prod_{\mathcal{D}} G$ is the diagonal embedding]. It follows from the definition of convolutions that $g_{i,j} = [\frac{1}{|X_{i,m}|}\mathbf{1}_{(b_{i,j}\cdot X_{i,m})}]_{\mathcal{D}}$. 

We prove the claims stated at the beginning of the proof, namely the $2$-inconsistency of rows and the consistency of paths. To show that rows are $2$-inconsistent, notice that $d(g_{i,j},g_{i,j'}) = 2$ whenever $j' \neq j$. Hence, if \begin{equation*} 
d(x * f_{i}, g_{i,j}) \leq \frac{1}{2} \text{ and } d(x * f_{i}, g_{i,j'}) \leq \frac{1}{2},
\end{equation*} 
is consistent, then by the triangle inequality $d(g_{i,j},g_{i,j'}) \leq 1$, a contradiction. To see that paths are consistent, choose any map $\eta\colon [n] \to [n]$. By the hypothesis in our statement, there exists an element $c \in G$ such that $c \in \bigcap_{i \leq n} b_{i,\eta(i)} \cdot H_i$. Thus, for each $i \leq n$ we may write $c = b_{i,\eta(i)} \cdot d_i$ where $d_i \in H_{i}$. Then 
\begin{align*} 
d(\mathbf{1}_{c} * f_{i},g_{i,\eta(i)}) = \left[\frac{|c \cdot X_{i,m} \triangle b_{i,\eta(i)} \cdot X_{i,m}|}{|X_{i,m}|} \right]_{D} &=  \left[\frac{|b_{i,\eta(i)} \cdot d_i \cdot X_{i,m} \triangle b_{i,\eta(i)} \cdot X_{i,m}|}{|X_{i,m}|} \right]_{D} \\
&= \left[\frac{| d_i \cdot X_{i,m} \triangle  X_{i,m}|}{|X_{i,m}|} \right]_{D} =0.
\end{align*} 

The third equality follows by multiplying both sides of the symmetric difference by $b_{i,\eta(i)}^{-1}$. The fourth equality follows from the fact that as $m$ goes to infinity, the value of the term $(|d_i \cdot X_{i,m} \triangle X_{i,m}|/|X_{i,m}|)$ goes to $0$, since $d_{i} \in H_{i}$ and $(X_{i,m})_{m < \omega}$ is a F\o lner sequence. 

To show the moreover portion of the statements, i.e., that $d(x * y, y)$ has $\mathrm{IP}$ with the additional assumption on the parameters, we argue that one can shatter the set $\{f_{i}: i \leq n\}$. Indeed, for $A \subseteq [n]$, consider $\mathbf{1}_{a_{A}}$ where $a_A \in \bigcap_{i = 1}^{n} \alpha_i H_i$, $\alpha_i = b_{i,0}=e$ if $i \in A$, otherwise $\alpha_i = b_{i,1}$.  Then if $i \in A$, $d(\mathbf{1}_{a_{A}} * f_i,f_i) = 0$ and if $i \not \in A$, $d(\mathbf{1}_{a_{A}} * f_i,f_i) = 2$. 
\end{proof}

In general, $\mathrm{TP}_2$ does not transfer from substructures to superstructures via embeddings. However, we will see that this is the case for the particular formula involved in the proof of Theorem \ref{prop:general}.

\begin{lemma}\label{lemma:TP2} Fix a discrete group $G$. Suppose that there exists a model $N$ of $\Th(\ell^{1}(G),*)$ and an array of parameters $(a_i,b_{i,j})_{i,j < \omega}$ from $N$ such that 
\begin{enumerate}
    \item The formulas $\{d(x * a_i, b_{i,j}) \dotdiv \frac{1}{2}: i, j < \omega\}$ witnesses $\mathrm{TP}_2$. 
    \item There exists $\epsilon > 0$ so that for each $i, j , k < \omega$, $j \neq k$, $d(b_{i,j},b_{i,k}) \geq 1 + \epsilon$. 
\end{enumerate}
Then if there is a metric embedding from the convolution algebra $(\ell^{1}(G),*)$ into an  $\mathcal{L}_{\cnv}$-structure $M$, then the formula $\varphi(x;y,z) = d(x * y,z) \dotdiv \frac{1}{2}$ has $\mathrm{TP}_{2}$ with respect to $\Th(M)$.
\end{lemma}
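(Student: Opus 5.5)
The plan is to encode the finite fragments of the given $\mathrm{TP}_2$ configuration as existential sentences. Such sentences hold in $N$, hence in $(\ell^{1}(G),*)$ because $N \equiv (\ell^{1}(G),*)$. Existential witnesses in $(\ell^{1}(G),*)$ are carried into $M$ by the metric embedding, since it preserves quantifier-free formulas. Compactness in $\Th(M)$ then rebuilds an infinite $\mathrm{TP}_2$ array. The key observation is that the $2$-inconsistency of the rows needs no quantifier at all: it follows from hypothesis (2) by the triangle-inequality argument already used in the proof of Theorem \ref{prop:general}. If $d(x*a_i,b_{i,j}) \leq \frac12$ and $d(x*a_i,b_{i,k})\leq \frac12$, then $d(b_{i,j},b_{i,k}) \leq 1 < 1+\epsilon$. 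So it suffices to transfer path consistency together with the distance condition $d(b_{i,j},b_{i,k}) \geq 1+\epsilon$.

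Concretely, fix $n$ and $\delta>0$. Consider the $\mathcal{L}_{\cnv}$-sentence
\begin{equation*}
\theta_{n,\delta} := \inf_{\bar y,\bar z}\max\Bigl\{ \max_{\eta\colon[n]\to[n]} \inf_x \max_{i\leq n}\bigl(d(x*y_i,z_{i,\eta(i)})\dotdiv \tfrac12\bigr),\ \max_{i\leq n,\, j\neq k}\bigl((1+\epsilon)\dotdiv d(z_{i,j},z_{i,k})\bigr)\Bigr\}.
\end{equation*}
All variables range over the unit ball, as in our conventions. In $N$, taking $y_i=a_i$ and $z_{i,j}=b_{i,j}$ gives $\theta_{n,\delta}^N = 0$: for each $\eta$ the path is satisfiable, so the inner $\inf_x$ is exactly $0$, and the second term is $0$ by hypothesis (2). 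Since $N\equiv(\ell^{1}(G),*)$, the sentence also has value $0$ in $(\ell^{1}(G),*)$. Hence there are $a_i',b_{i,j}'\in\ell^{1}(G)$, and for each of the finitely many $\eta$ an element $x_\eta\in\ell^{1}(G)$, such that
\begin{equation*}
d(x_\eta*a_i',b_{i,\eta(i)}')\leq \tfrac12+\delta \quad\text{and}\quad d(b'_{i,j},b'_{i,k})\geq 1+\epsilon-\delta \text{ for } j\neq k.
\end{equation*}
These are quantifier-free conditions on elements of $\ell^{1}(G)$, so they are preserved by the metric embedding into $M$, since it is an isometry commuting with $*$. They therefore hold in $M$ for the images.

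Finally, I apply compactness in a sufficiently saturated $\widetilde M\succeq M$. Consider the type in variables $(y_i,z_{i,j})_{i,j<\omega}$ and $(x_\eta)_{\eta\in\omega^\omega}$ stating that $d(x_\eta*y_i,z_{i,\eta(i)})\dotdiv\frac12=0$ for all $i$ and all $\eta$, and that $d(z_{i,j},z_{i,k})\geq 1+\epsilon/2$ for $j\neq k$. Each finite part of this type mentions only finitely many rows, columns and paths. Restrictions of paths to $[n]$ are functions $[n]\to[n]$, so each finite part is approximately satisfiable in $M$ with arbitrarily small $\delta$, after relabelling indices into $[n]$ and choosing $\delta<\epsilon/2$. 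This is the step requiring the most care. Since approximate satisfiability for every $\delta$ is exactly the continuous-logic notion of finite satisfiability, the type is realized in $\widetilde M$. In the realization, every path $\{\varphi(x,y_i,z_{i,\eta(i)})=0 : i<\omega\}$ is satisfied by $x_\eta$. Every row is $2$-inconsistent by the triangle-inequality argument, because $1<1+\epsilon/2$. Thus $\varphi(x;y,z)=d(x*y,z)\dotdiv\frac12$ has $\mathrm{TP}_2$ with respect to $\Th(M)$.
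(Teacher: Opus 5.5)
Your proof is correct and uses the same idea as the paper. Only the quantifier-free path conditions and the separation $d(b_{i,j},b_{i,k})\geq 1+\epsilon$ are pushed through the metric embedding, and the $2$-inconsistency of the rows is recovered from the triangle inequality. The difference is packaging: the paper realizes the whole array at once in the $\aleph_1$-saturated ultrapower $\prod_D(\ell^1(G),*)$, viewed as a substructure of $\prod_D M$, while you transfer finite approximate fragments via the sentences $\theta_{n,\delta}$ and reassemble them by compactness in $\Th(M)$.
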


\begin{proof} Identify the metric embedding of $(\ell^{1}(G),*)$ in $M$ with $(\ell^{1}(G),*)$. After taking a metric ultrapower of $M$, namely $\prod_{D} M$, we let $N'$ be the substructure corresponding to the metric ultrapower of the embedded copies of $(\ell^{1}(G),*)$. By saturation, we may find an array of parameters in $N'$ with properties $(1)$ and $(2)$ as described in the lemma. Since $\varphi(x;y,z)$ is quantifier free, the paths (which are satisfiable in $N'$) remain satisfiable (in $\prod_{D}M$). So it suffices to prove that the rows are 2-inconsistent. Fix $j \neq k$ and suppose there is some $h \in \prod_{D} M$ such that
\begin{equation*}
    d(h * a_i, b_{i,j}) \leq \frac{1}{2} \text{ and } d(h * a_i, b_{i,k}) \leq \frac{1}{2}. 
\end{equation*}
Then by the triangle inequality, 
\begin{equation*}
    d(b_{i,j},b_{i,k}) \leq d(h * a_i, b_{i,j}) + d(h * a_i, b_{i,k}) \leq \frac{1}{2} + \frac{1}{2} = 1.  
\end{equation*}
This contradicts the fact that  $d(b_{i,j},b_{i,k}) \geq 1 + \epsilon$.
\end{proof}

The next theorem shows that the countability assumption can be dropped from Theorem \ref{prop:general}.

\begin{theorem}\label{thm:general}
Let $G$ be a discrete group. Suppose that for every natural number $k$ there exists a family of pairwise distinct (proper) amenable subgroups $(H_i)_{i \leq k}$ of $G$ and an array of elements $(b_{i,j})_{i,j \leq k}$ from $G$ such that 
\begin{enumerate}
    \item For each $\eta \colon [k] \to [k]$, $\bigcap_{i \leq k} b_{i,\eta(i)} H_{i} \neq \emptyset$. 
    \item For each $i,j,j' \leq k$, $b_{i,j} H_{i} \cap b_{i,j'} H_{i} = \emptyset$ whenever $j\neq j'$. 
\end{enumerate}
Then the formula $d(x * y, z) \dotdiv \frac{1}{2}$ has $\mathrm{TP}_{2}$. Furthermore, if one can choose the arrays of elements from $G$ such that $b_{i,0} = e$ for $i \leq k$, then $d(x *y, y)$ has $\mathrm{IP}$ relative to the theory of $(\ell^{1}(G),*)$.
\end{theorem}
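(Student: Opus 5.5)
The plan is to reduce to the countable case, Theorem \ref{prop:general}, and then transfer $\mathrm{TP}_2$ upward with Lemma \ref{lemma:TP2}. For each $k$, fix the subgroups $(H_i^{k})_{i\le k}$ and the elements $(b^{k}_{i,j})_{i,j\le k}$ given by the hypothesis. For every $\eta\colon[k]\to[k]$, also fix a witness $c^{k}_\eta\in\bigcap_{i\le k} b^k_{i,\eta(i)}H^k_i$, and write $c^k_\eta=b^k_{i,\eta(i)}d^k_{\eta,i}$ with $d^k_{\eta,i}\in H^k_i$. Let $G_0\le G$ be the subgroup generated by all the $b^k_{i,j}$, $c^k_\eta$ and $d^k_{\eta,i}$ for all $k$. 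This is a countable subgroup. Inside $G_0$, set $H'^k_i=H^k_i\cap G_0$. Each $H'^k_i$ is countable and amenable, since subgroups of discrete amenable groups are amenable. Condition (1) still holds in $G_0$, because $c^k_\eta=b^k_{i,\eta(i)}d^k_{\eta,i}\in b^k_{i,\eta(i)}H'^k_i$. Condition (2) passes to subsets trivially. The words ``pairwise distinct (proper)'' play no role in the proof of Theorem \ref{prop:general}, so that theorem applies to $G_0$ as long as we read the hypotheses as conditions (1) and (2). If the normalization $b^k_{i,0}=e$ holds in $G$, it is unchanged in $G_0$.

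Next I would extract from the proof of Theorem \ref{prop:general} more than its statement gives. In $N=\prod_{\mathcal D}(\ell^1(G_0),*)$ we get an array with $a_i=f_i$ and $b_{i,j}=g_{i,j}$, where $d(g_{i,j},g_{i,j'})=2$ for $j\neq j'$. For each fixed $n$ this gives a finite $n\times n$ configuration. By compactness, i.e.\ $\aleph_1$-saturation, I would obtain an $\omega\times\omega$ array in a model of $\Th(\ell^1(G_0),*)$ satisfying two conditions. First, paths are satisfiable with value $0$. Second, $d(b_{i,j},b_{i,k})\ge 2$, so hypothesis (2) of Lemma \ref{lemma:TP2} holds with $\epsilon=1$. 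The distance condition is a closed condition, so it survives compactness.

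The inclusion $G_0\subseteq G$ induces a metric embedding of $(\ell^1(G_0),*)$ into $(\ell^1(G),*)$. This map is an isometry and preserves the lattice operations and convolution, since convolution of functions supported in $G_0$ stays supported in $G_0$. Lemma \ref{lemma:TP2}, applied with $M=(\ell^1(G),*)$, then gives that $d(x*y,z)\dotdiv\frac12$ has $\mathrm{TP}_2$ for $\Th(\ell^1(G),*)$.

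For the IP clause I would argue the same way. The formula $d(x*y,y)$ is quantifier free. The finite shattering configurations $f_i$, $\mathbf 1_{a_A}$ from Theorem \ref{prop:general} live in $\prod_{\mathcal D}\ell^1(G_0)$, which embeds into $\prod_{\mathcal D}\ell^1(G)\models\Th(\ell^1(G),*)$. The values $0$ and $2$ are therefore preserved, and IP transfers directly. I expect the main obstacle to be the bookkeeping in the first step: choosing $G_0$ so that it contains the path witnesses $c^k_\eta$ together with the decomposing elements $d^k_{\eta,i}$, which is what makes condition (1) hold for the intersected subgroups. The transfer steps afterwards are routine.
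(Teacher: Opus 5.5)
Your proposal is correct and follows the paper's proof. You pass to a countable $G_0\le G$ in which the configuration survives, apply Theorem \ref{prop:general} there, and lift $\mathrm{TP}_2$ to $\Th(\ell^1(G),*)$ through the inclusion $\ell^1(G_0)\hookrightarrow\ell^1(G)$ and Lemma \ref{lemma:TP2}. $\mathrm{IP}$ then transfers because $d(x*y,y)$ is quantifier free.

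The paper obtains $G_0$ by downward L\"owenheim--Skolem in a language with predicates for the subgroups. You instead take the subgroup generated by the $b^k_{i,j}$ and the path witnesses $c^k_\eta$; the $d^k_{\eta,i}=(b^k_{i,\eta(i)})^{-1}c^k_\eta$ then lie in $G_0$ automatically. This is more elementary and keeps $b^k_{i,0}=e$ without extra work. You also check hypothesis (2) of Lemma \ref{lemma:TP2} explicitly with $\epsilon=1$, which the paper leaves implicit.
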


\begin{proof} Consider the countable language $\mathcal{L}' = \mathcal{L}_{\grp} \cup \{(H_{k,i}(x))_{i \leq k}: k < \omega\}$. $G$ can naturally be extended to a structure in this language by interpreting each finite sequence of unary predicates $(H_{k,i}(x))_{i \leq k}$ as a finite family of amenable subgroups of $G$ with the appropriate intersection property stipulated in the hypothesis. By the downward L\"{o}wenheim-Skolem theorem, there exists a countable $\mathcal{L}'$-structure $G_0$ such that $G_0 \preceq G$. Notice that for every $k < \omega$ and $i \leq k$, $H_{k,i}(G_0)$ is a countable subgroup of $H_{k,i}(G)$. Since $H_{k,i}(G)$ is amenable, so is $H_{k,i}(G_0)$. Since $G_0 \preceq G$, the sequence of subgroups $(H_{k,i}(G_0))_{i \leq k}$ gives a family of amenable subgroups of $G_0$ with the appropriate intersection property, i.e., there exists parameters $(b_{i,j})_{i,j \leq k}$ in $G_0$ such that $(H_{k,i}(G_0))_{i \leq k}$ and $(b_{i,j})_{i,j \leq k}$ witness the hypothesis of Theorem \ref{prop:general}. Moreover, the inclusion map $\iota\colon (\ell^{1}(G_0),*) \to (\ell^{1}(G),*)$ is clearly a metric embedding that preserves the convolution operation and thus by Lemma \ref{lemma:TP2}, $d(x * y,z) \dotdiv \frac{1}{2}$ has $\mathrm{TP}_{2}$ with respect to $\Th(\ell^{1}(G),*)$. Finally, assuming the moreover condition, we remark that $d(x * y, y)$ has $\mathrm{IP}$ with respect to $\Th(\ell^{1}(G),*)$ since $(\ell^{1}(G_0),*)$ is a substructure of $(\ell^{1}(G),*)$, $d(x * y, y)$ has $\mathrm{IP}$ with respect to this model, and $d(x * y,y)$ is a quantifier free formula.
\end{proof}

Our goal now is to show that $\Th(\ell^{1}(G),*)$ has $\mathrm{TP}_2$ for a wide range of concrete groups. In particular, we prove that if $G$ contains an infinite abelian subgroup, then $\Th(\ell^{1}(G),*)$ has $\mathrm{TP}_2$. To do so, we require a series of lemmas, beginning with the following classical description of  torsion abelian groups.

\begin{fact}\cite[Theorem 8.4]{Fuchs1970}\label{Fuch} A torsion abelian group $A$ is the direct sum of $p$-groups $A_{p}$ belonging to different primes $p$. The $A_{p}$ are uniquely determined by $A$. 
\end{fact}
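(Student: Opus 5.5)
The statement to prove is the classical primary decomposition of torsion abelian groups (Fuchs, Theorem 8.4). The plan is to define the $p$-components directly: for each prime $p$, set
\[
A_{p} = \{a \in A : p^{n}a = 0 \text{ for some } n\}.
\]
Each $A_p$ is a subgroup, since $A$ is abelian and $p^{\max(n,m)}(a-b)=0$ whenever $p^n a = p^m b = 0$. The proof then establishes three things in order: $A$ is generated by the $A_p$, the sum of the $A_p$ is direct, and the $A_p$ are unique.

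For generation, take $a\in A$ of finite order $n = p_1^{e_1}\cdots p_r^{e_r}$. Put $n_i = n/p_i^{e_i}$. The integers $n_i$ are coprime, so Bézout gives integers $u_i$ with $\sum u_i n_i = 1$. Hence
\[
a = \sum_i u_i n_i a ,
\]
and each summand is killed by $p_i^{e_i}$, so $u_i n_i a\in A_{p_i}$.

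For directness, suppose $a_1+\dots+a_r=0$ with $a_i\in A_{p_i}$ for distinct primes $p_i$. Multiply by $m=\prod_{j\ne 1}p_j^{e_j}$, where $p_j^{e_j}$ kills $a_j$. This gives $m a_1=0$. The order of $a_1$ is also a power of $p_1$, and that power is coprime to $m$, so $a_1=0$. By symmetry every $a_i=0$.

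For uniqueness, suppose $A=\bigoplus_p B_p$ where each $B_p$ is a $p$-group. Clearly $B_p\subseteq A_p$. Conversely, write $a\in A_p$ as $\sum_q b_q$ with $b_q\in B_q$. Since the sum is direct, the order of $a$ is the lcm of the orders of the $b_q$. That order is a power of $p$, which forces $b_q=0$ for every $q\neq p$. Hence $A_p=B_p$.

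No step is a real obstacle. The only point needing care is the Bézout step in the generation argument, and it is routine; in the paper one could simply cite Fuchs.
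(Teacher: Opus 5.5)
Your proof is correct. It is the standard argument, essentially Fuchs's own: define $A_p$ as the elements of $p$-power order, split each element via B\'ezout, get directness from coprime orders, and get uniqueness because any $p$-group summand must equal the set of $p$-power-order elements. The paper gives no proof and simply cites Fuchs. The only wording to tighten is that the $n_i$ are jointly coprime, $\gcd(n_1,\dots,n_r)=1$, not pairwise coprime; joint coprimality is exactly what the several-variable B\'ezout identity needs.
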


As a consequence of this theorem, any infinite torsion group must contain either finite groups that split into products of arbitrarily many components, or cyclic subgroups $\mathbb{Z}/n\mathbb{Z}$ with $n$ arbitrarily large. We make this precise in the following fact.

\begin{fact}\label{fact:subgroup} Suppose that $G$ is a torsion group that contains an infinite abelian subgroup. Then either 
\begin{enumerate}
    \item There exists arbitrarily large $n$ such that $H_1 \times ... \times H_{n}$ is a subgroup of $G$ and each $H_i$ for $i \leq n$ is a finite non-trivial abelian group. 
    \item There exists a prime $p$ such that for every integer $m$, there exists an integer $n > m$ with $\mathbb{Z}/p^{n}\mathbb{Z}$ a subgroup of $G$. 
\end{enumerate}
\end{fact}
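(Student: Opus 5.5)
The plan is to reduce the statement to the structure theory of infinite torsion abelian groups given by Fact \ref{Fuch}. Since $G$ is torsion, every subgroup of $G$ is torsion. So fix an infinite abelian subgroup $A \leq G$; it is an infinite torsion abelian group. By Fact \ref{Fuch}, write $A = \bigoplus_{p} A_p$ with $A_p$ the $p$-primary components. It then suffices to produce the configurations in (1) or (2) inside $A$, since any subgroup of $A$ is a subgroup of $G$. We split into two cases according to how many primes $p$ have $A_p \neq 0$.

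\emph{Case 1: infinitely many $A_p$ are nontrivial.} For each $n$ pick distinct primes $p_1,\dots,p_n$ with $A_{p_i}\neq 0$, and choose a nonzero element $a_i \in A_{p_i}$. Each $a_i$ generates a finite nontrivial cyclic group $H_i = \langle a_i\rangle$. The internal sum $H_1 + \dots + H_n$ inside the direct sum is direct, so it is isomorphic to $H_1\times\dots\times H_n$, and (1) holds.

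\emph{Case 2: only finitely many $A_p$ are nontrivial.} Then some single $A_p$ is infinite, and we work in the infinite abelian $p$-group $P = A_p$. Consider the socle $P[p] = \{x : px = 0\}$, which is an $\mathbb{F}_p$-vector space.

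If $P[p]$ is infinite, then for each $n$ we pick $n$ linearly independent elements. They span a subgroup $(\mathbb{Z}/p\mathbb{Z})^n$, and (1) holds.

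If $P[p]$ is finite, of dimension $r$, the step needing the most care is to get arbitrarily large cyclic subgroups. Every finite subgroup of $P$ is a finite abelian $p$-group whose socle embeds in $P[p]$. Hence every finite subgroup is a product of at most $r$ cyclic groups $\mathbb{Z}/p^{e}\mathbb{Z}$. Since $P$ is infinite and torsion, it contains finite subgroups of arbitrarily large order: take the subgroup generated by finitely many elements, which is finite because $P$ is abelian and torsion. Such a subgroup of order $\geq p^{rm}$ must have some cyclic factor of order $p^{e}$ with $e > m$. This gives $\mathbb{Z}/p^{e}\mathbb{Z} \leq G$ with $e>m$, which is (2).

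Note that Case 1 only uses the directness of the decomposition in Fact \ref{Fuch}. The substantive point is the finite-socle bound in Case 2, which controls the number of cyclic factors of any finite subgroup.
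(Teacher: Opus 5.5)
Your proof is correct. It shares the paper's outer structure: apply Fact~\ref{Fuch} to an infinite abelian subgroup, handle the case of infinitely many nontrivial primary components by picking elements in distinct $A_p$, and then work inside one infinite $A_p$. The difference is in how you treat that last case.

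The paper works one $n$ at a time. For each $n$ it takes the subgroup $K$ generated by $p^{n^2}$ distinct elements of $A_p$ and writes $K\cong\prod_{i\le m}\mathbb{Z}/p^{k_i}\mathbb{Z}$. Then either $m\ge n$, or $p^{n^2}\le |K|\le p^{m\max_i k_i}$ forces $\max_i k_i>n$. Because the dichotomy is made separately for each $n$, one still has to note that one alternative must recur for infinitely many $n$; the paper leaves this implicit.

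You instead decide the case once and for all using the $p$-rank $\dim_{\mathbb{F}_p}P[p]$. Infinite rank gives $(\mathbb{Z}/p\mathbb{Z})^n\le G$ for every $n$. Finite rank $r$ uniformly bounds the number of cyclic factors of every finite subgroup, so any growth in order must come from the exponent. This gives a uniform dichotomy with no pigeonhole over $n$. It also gives slightly sharper information: in the infinite-rank case all the $H_i$ can be taken to be $\mathbb{Z}/p\mathbb{Z}$. The cost is introducing the socle, whereas the paper's argument is pure counting with the structure theorem.

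One small slip: a finite subgroup of order $\geq p^{rm}$ need not have a cyclic factor of exponent $>m$; for example, $(\mathbb{Z}/p^{m}\mathbb{Z})^{r}$ has all exponents equal to $m$. You need order $>p^{rm}$. This is available, since you already observed that $P$ has finite subgroups of arbitrarily large order.
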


\begin{proof} Let $H$ be an infinite abelian subgroup of $G$. Since $G$ is torsion, so is $H$. By Fact \ref{Fuch}, $H$ is a direct sum of $p$-groups, say 

\begin{equation*}
    H \cong \bigoplus_{p} A_{p}.
\end{equation*}

If there are infinitely many primes for with which the component $A_{p}$ is non-trivial, we are in case (1), since every torsion group clearly contains a finite subgroup. Thus we may assume that there are only finitely many non-trivial components.  By the pigeon hole principle, there exists some $p$ such that $|A_{p}|$ is infinite. For  integer $n$, consider $(p^{n})^{n}$ many distinct point in $A_{p}$ and consider the generated subgroup, $K$. By the fundamental theorem of finitely generated abelian groups, we have that $K$ is isomorphic to $\prod_{i \leq m} \mathbb{Z}/p^{k_{i}}\mathbb{Z}$ for some integers $k_{1},...,k_{m}$. If $m \geq n$, we have satisfied condition (1). Now assume that $m < n$. Since our finite group is generated by $(p^{n})^{n}$-many elements, the group must be at least of cardinality $(p^{n})^{n}$. Thus, $ (p^{n})^{n} \leq |K| = \prod_{i \leq m } p^{k_i} \leq \left( p^{\max_{i \leq m}\{k_i\}} \right)^{m}$. Since $m < n$, $\max_{i \leq m} \{k_i\} > n$. 

Thus, for every integer $n$, we have found a subgroup $K$ such that either $K$ is the product of at least $n$-non trivial abelian groups or $K$ contains a $p$-group of size at least $p^{n}$. This completes the proof.  
\end{proof}

We will now show that the convolution algebra $(\ell^{1}(\mathbb{Z}),*)$ embeds into an ultraproduct of convolution algebras over finite groups. The argument is straightforward and relies on two observations. First, finitely supported functions are dense in $\ell^{1}(\mathbb{Z})$; hence, every element of $\ell^{1}(\mathbb{Z})$ can be approximated arbitrarily well in the $\ell^{1}$-norm by finitely supported functions. Second, finitely supported functions in $\ell^{1}(\mathbb{Z})$ are closed under the operations in our language. In particular, these operations only depend on finitely many coordinates and therefore take place entirely within some large finite interval of $\mathbb{Z}$. Such finite pieces can be faithfully modeled inside $\mathbb{Z}/n\mathbb{Z}$ for sufficiently large $n$. Combining these observations, one obtains an embedding of $(\ell^{1}(\mathbb{Z}),*)$ into an appropriate ultraproduct of the convolution algebras $(\ell^{1}(\mathbb{Z}/n\mathbb{Z}),*)$. More interestingly, in the next section we will extend this construction to show how $(L^{1}(G),*)$ embeds into similar ultraproducts when $G$ is a connected abelian Lie group.

\begin{proposition}\label{prop:discrete} Suppose that $(n_i)_{i < \omega}$ is a sequence of strictly increasing positive integers. Consider each $\mathbb{Z}/n_i\mathbb{Z}$ as a group with the counting measure and $D$ a non-principal ultrafilter on $\mathbb{N}$. Then there is a metric embedding of convolution algebras from $(\ell^{1}(\mathbb{Z}),*)$ into $\prod_{D} (\ell^{1}(\mathbb{Z}/n_i\mathbb{Z}),*)$. 
\end{proposition}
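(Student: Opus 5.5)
The plan is to define the embedding on finitely supported functions coordinatewise and then extend by continuity. For each $i$, let $\pi_i\colon \mathbb{Z}\to\mathbb{Z}/n_i\mathbb{Z}$ be the quotient map, and let $\pi_i^*\colon \ell^1(\mathbb{Z})\to \ell^1(\mathbb{Z}/n_i\mathbb{Z})$ be the push-forward
\[
(\pi_i^* f)(\bar{k})=\sum_{m\equiv k \ (n_i)} f(m).
\]
Define $\Phi(f)=[\pi_i^*f]_D$. By Young's inequality each $\pi_i^*$ has norm at most $1$, so $\Phi$ is well defined, linear, and $1$-Lipschitz into the ultraproduct.

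The first step is to handle $f\in c_{00}(\mathbb{Z})$ with $\supp(f)\subseteq[-N,N]$. For all $i$ with $n_i>2N$, the map $\pi_i$ is injective on $[-N,N]$, so $\pi_i^*f$ is just a relabelling of $f$. Because $(n_i)$ is strictly increasing, this holds for cofinitely many $i$, hence $D$-almost surely since $D$ is non-principal. On that set:
\begin{enumerate}
\item $\lVert \pi_i^*f\rVert = \lVert f\rVert$.
\item The lattice operations are preserved: $\pi_i^*(f\wedge g)=\pi_i^*f\wedge\pi_i^*g$, and likewise for $\vee$ and $+$, whenever $f,g$ are supported in $[-N,N]$.
\item Convolution is preserved once $n_i>4N$. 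The convolution $f*g$ is supported in $[-2N,2N]$, and $\pi_i$ is a homomorphism, so $\pi_i^*(f*g)=\pi_i^*f*\pi_i^*g$.
\end{enumerate}
In fact $\pi_i^*$ is an algebra homomorphism for every $i$, because push-forward along a homomorphism respects convolution. Only the norm and the lattice operations need injectivity on the support.

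The second step passes to the ultraproduct. On $c_{00}(\mathbb{Z})$, $\Phi$ is isometric and preserves $+$, $\wedge$, $\vee$ and $*$. Since $c_{00}(\mathbb{Z})$ is dense in $\ell^1(\mathbb{Z})$, each identity extends to all of $\ell^1(\mathbb{Z})$: $\Phi$ is continuous, the operations are uniformly continuous by the corollary to Young's inequality, and the lattice operations are $1$-Lipschitz in norm. It remains to check that the continuous $\Phi$ defined directly agrees with the extension. This holds because $\Phi$ is $1$-Lipschitz and agrees with the extension on a dense set.

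The only point needing care is the lattice operations, since $\pi_i^*$ is not a lattice homomorphism in general. The argument must therefore use the $D$-large set of indices on which the supports embed injectively, and only then take limits. I expect this to be routine rather than a real obstacle.
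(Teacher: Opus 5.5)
Your proposal is correct and is essentially the paper's argument: reduce finitely supported functions modulo $n_i$, use that $\pi_i$ is injective on the support for cofinitely many (hence $D$-almost all) indices, and extend by density. On $c_{00}(\mathbb{Z})$ your map agrees with the paper's except at finitely many coordinates, and your one refinement is defining $\Phi$ globally as the push-forward; that map is contractive by the triangle inequality (not Young's inequality) and is a convolution homomorphism at every index, so only the norm and the lattice operations need the injectivity argument.
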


\begin{proof} The collection of finitely supported functions on $\mathbb{Z}$, $c_{00}(\mathbb{Z})$, is a dense subset of $\ell^{1}(\mathbb{Z})$. It suffices to define our metric embedding for the subspace $c_{00}(\mathbb{Z})$ and extend by continuity. We define the embedding $\Phi$ as follows: If $f \in c_{00}(\mathbb{Z})$ then we let $\Phi(f) = [\hat{f}_i]_{D}$ where 
\begin{enumerate}
    \item If $\supp(f)$ is not a subset of the interval $[-n_i+,n_i]$, then $\hat{f}_{i}$ is $0$. 
\item If $\supp(f) \subseteq [-n_i,n_i]$,
then 
$f = \sum_{-n_i \leq k \leq n_i} r_k \mathbf{1}_{k}$ and we set
\begin{equation*}
    \hat{f}_i = \sum_{-n_i \leq k \leq n_i} r_k \mathbf{1}_{[k]_i}, 
\end{equation*}
where $[k]_i$ is the equivalence class of the integer $k$ in $\mathbb{Z}/n_i\mathbb{Z}$. 
\end{enumerate}
We check that $\Phi$ respects the norm and convolution and leave the other operations (i.e., addition, min, max) as an exercise. 
\begin{claim-star} The map $\Phi$ preserves the norm. 
\end{claim-star}
\begin{claimproof}

Fix $f \in c_{00}(\mathbb{Z})$ and fix $i < \omega$ such that $\supp(f) \subseteq [-n_i, n_i]$. Fix $j < \omega$ such that $n_j > 2n_i$. Then for any $t \geq j$,  if $f = \sum_{-n_i \leq k \leq n_i} r_k\mathbf{1}_{k}$, the map $\iota_{t}\colon [-n_i,n_i] \to \mathbb{Z}/n_t\mathbb{Z}$ via $a \mapsto [a]_{t}$ is injective and so,
\begin{equation*}
    \lVert \hat{f}_t \rVert_{\ell^1(\mathbb{Z}/n_t\mathbb{Z})} =  \sum_{-n_i \leq k \leq n_i} |r_k| = \lVert f \rVert_{\ell^1(\mathbb{Z})} .
\end{equation*}
Hence this equality holds for cofinitely many indices. Since $D$ is non-principal, the conclusion holds. 
\end{claimproof}
\begin{claim-star} The map $\Phi$ preserves convolution. 
\end{claim-star}

\begin{claimproof}

Fix $f, g \in c_{00}(\mathbb{Z})$ and let $h = f * g$. Fix $i < \omega$ such that $\supp(f),\supp(g),\supp(f) + \supp(g) \subset [-n_i,n_i]$. Then $\supp(h) \subset [-n_i,n_i]$. Fix $j < \omega$ such that $n_{j} > 2n_i$. It suffices to show that for $t > j$,
\begin{equation*}
    \lim_{t \to \infty} d(\hat{f}_t * \hat{g}_{t}, \hat{h}_t) = 0. 
\end{equation*}

If $f = \sum_{-n_i \leq a \leq n_i} r_a \mathbf{1}_{a}$ and $g = \sum_{-n_i \leq b \leq n_i} s_b \mathbf{1}_{b} $, then again the map $\iota_{t}\colon [-n_i,n_i] \to \mathbb{Z}/n_t\mathbb{Z}$ is injective and so,
\begin{align*}
    \hat{f}_{t} * \hat{g}_{t} &= \left( \sum_{-n_i \leq a \leq n_i} r_a \mathbf{1}_{[a]_{t}} \right) * \left( \sum_{-n_i \leq b \leq n_i} s_b \mathbf{1}_{[b]_{t}} \right) \\ 
    &=\sum_{-n_i \leq a,b \leq n_i} r_a s_b \mathbf{1}_{[a + b]_{t}} \\
    &= \hat{h}_t.
\end{align*}
The last equality follows from the definition of convolution and the fact that $\supp(h) \subset [-n_i,n_i]$.
\end{claimproof}
\end{proof}

We now prove that if $G$ contains an infinite abelian group, then the theory of the convolution algebra $(\ell^{1}(G),*)$ has $\mathrm{TP}_{2}$. 

\begin{corollary}\label{cor:tp2} For the following groups $G$, the formula $d(x * y,z)\dotdiv\frac{1}{2}$ has $\mathrm{TP}_2$ and $d(x * y, y)$ has $\mathrm{IP}$ relative to $\Th(\ell^1(G),*)$. 
\begin{enumerate} 
\item $G = \mathbb{Z}$. 
\item $G = \bigoplus_{i \leq \kappa} G_i$ or $G = \prod_{i \leq \kappa} G_i$, where each $G_i$ is a nontrivial amenable group and $\kappa\geq \aleph_0$. 
\item $G$ contains an infinite abelian subgroup. 
\end{enumerate} 
\end{corollary}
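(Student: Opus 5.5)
We check the hypotheses of Theorem \ref{thm:general} directly where possible, and otherwise metrically embed a convolution algebra already known to witness $\mathrm{TP}_2$ into a model of $\Th(\ell^1(G),*)$ and apply Lemma \ref{lemma:TP2}. The array produced in Theorem \ref{prop:general} satisfies $d(g_{i,j},g_{i,j'})=2$ for $j\neq j'$, so hypothesis (2) of Lemma \ref{lemma:TP2} holds with $\epsilon=1$. For $\mathrm{IP}$ of $d(x*y,y)$ we use that this formula is quantifier free, so a shattering configuration survives any metric embedding.

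\emph{Case (1), $G=\mathbb{Z}$.} Given $k$, choose distinct primes $p_1,\dots,p_k>k$ and set $H_i=p_i\mathbb{Z}$ and $b_{i,j}=j$ for $0\le j\le k$. These subgroups are abelian, hence amenable, and pairwise distinct. The cosets $j+p_i\mathbb{Z}$ are pairwise disjoint for distinct $j\le k<p_i$. Every path $\bigcap_i (\eta(i)+p_i\mathbb{Z})$ is nonempty by the Chinese remainder theorem. Since $b_{i,0}=0$, Theorem \ref{thm:general} gives both $\mathrm{TP}_2$ and $\mathrm{IP}$.

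\emph{Case (2).} Direct products of amenable discrete groups need not be amenable, so we do not work in all of $G$. Instead we fix a countable subsum $G'=\bigoplus_{n<\omega}G_{l_n}$, which sits inside both the direct sum and the direct product. $G'$ is amenable as a directed union of finite products of amenable groups, and so is every subsum of it. Given $k$, choose disjoint finite blocks $B_1,\dots,B_k\subseteq\omega$ such that $K_i=\bigoplus_{n\in B_i}G_{l_n}$ has at least $k+1$ elements. Let $H_i=\bigoplus_{n\notin B_i}G_{l_n}\le G'$, and let $b_{i,0}=e,b_{i,1},\dots,b_{i,k}$ be distinct elements of $K_i$. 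Distinct elements of $K_i$ lie in distinct cosets of $H_i$. For a path, the product over $i$ of the $b_{i,\eta(i)}$ (computed coordinatewise) lies in every $b_{i,\eta(i)}H_i$. Theorem \ref{thm:general} therefore applies to $G'$. The inclusion $\ell^1(G')\hookrightarrow\ell^1(G)$ is a metric embedding of convolution algebras, so Lemma \ref{lemma:TP2} transfers $\mathrm{TP}_2$, and quantifier-freeness transfers $\mathrm{IP}$.

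\emph{Case (3).} Let $A\le G$ be infinite abelian. If $A$ has an element of infinite order, then $\mathbb{Z}\le G$; we embed $\ell^1(\mathbb{Z})\hookrightarrow\ell^1(G)$ and conclude by case (1) and Lemma \ref{lemma:TP2}. Otherwise $A$ is torsion, and Fact \ref{fact:subgroup} applies to $A$ (its proof uses only the torsion abelian subgroup).

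\emph{Subcase (a) of Fact \ref{fact:subgroup}.} For each $k$ there is a finite subgroup $L_1\times\dots\times L_n\le G$ with $n\ge k\lceil\log_2(k+1)\rceil$. Grouping the factors into $k$ blocks gives finite factors $K_1,\dots,K_k$, each of size $>k$, with $K_1\times\dots\times K_k\le G$. Set $H_i=\prod_{l\neq i}K_l$, which is finite, hence amenable, and run the same coset argument as in case (2). This verifies the hypotheses of Theorem \ref{thm:general} for $G$ itself, including $b_{i,0}=e$.

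\emph{Subcase (b) of Fact \ref{fact:subgroup}.} There are cyclic subgroups $\mathbb{Z}/p^{n_t}\mathbb{Z}\le G$ with $n_t$ strictly increasing. I expect this to be the main obstacle, since a cyclic $p$-group has no Chinese remainder configuration of subgroups, so we argue via embeddings. By Proposition \ref{prop:discrete}, $\ell^1(\mathbb{Z})$ embeds metrically into $\prod_D \ell^1(\mathbb{Z}/p^{n_t}\mathbb{Z})$. This ultraproduct embeds metrically into the ultrapower $\prod_D\ell^1(G)$, coordinatewise via the inclusions $\ell^1(\mathbb{Z}/p^{n_t}\mathbb{Z})\hookrightarrow\ell^1(G)$, and that ultrapower is a model of $\Th(\ell^1(G),*)$. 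Composing the two embeddings gives a metric embedding of $\ell^1(\mathbb{Z})$ into a model of the theory. Lemma \ref{lemma:TP2}, with $M=\prod_D\ell^1(G)$, then yields $\mathrm{TP}_2$ by case (1). $\mathrm{IP}$ again transfers because the formula is quantifier free. The points to double-check here are that the embedding in Proposition \ref{prop:discrete} preserves all of $\mathcal{L}_{\cnv}$, and that an ultraproduct of substructures embeds into the ultrapower.
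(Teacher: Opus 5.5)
Your proposal is correct and follows essentially the paper's route. It uses Theorem \ref{thm:general} with Chinese-remainder cosets of $p_i\mathbb{Z}$ for $\mathbb{Z}$; your $H_i=p_i\mathbb{Z}$ is the right reading of the paper's $\mathbb{Z}/p_i\mathbb{Z}$. It uses block-product cosets for case (2) and the many-factors subcase of (3), and Proposition \ref{prop:discrete} composed with Lemma \ref{lemma:TP2} for the cyclic $p$-power subcase. Passing to the countable subsum $G'$ with co-block subgroups $H_i$ is harmless but unnecessary, since Theorem \ref{thm:general} only needs the $H_i$ (not $G$) to be amenable, and the paper simply takes each $H_i$ to be the finite product of the other blocks inside $G$.
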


\begin{proof} We apply Theorem \ref{thm:general} to the examples above. 
\begin{enumerate} 
\item 

For fixed $k$, let $p_0,...,p_k$ be distinct prime numbers which are strictly greater than $k$. Let $H_i = \mathbb{Z}/p_{i}\mathbb{Z}$ and let $b_{i,j} = j$. This family of groups and elements satisfy the hypothesis of Theorem \ref{thm:general}. 
\item 

Fix $k$. First we construct some \emph{large enough} subgroups by considering finite products from our indexed family. We define them recursively. First, we set $M_0 = G_0 \times \dots \times G_{t_0}$ where $t_0$ is the smallest index such that $|G_0 \times \dots \times G_{t_0}| > k$. Suppose we have constructed $M_{i}\leq G$ and $t_i$ for $i\leq j$ in such a way so that $M_{i}$ is a finite product of the $G_m$'s (which we identify with its canonical image in $G$), $|M_i| > k$ and $M_i\cap M_\ell=\{e\}$ for $i < \ell \leq j$. Then $M_{j+1} = G_{t_{j}+1} \times \dots \times G_{t_{j+1}}$ (which again we identify with its canonical image in $G$) where $t_{j+1}$ is the smallest index such that $|G_{t_{j}+1} \times \dots \times G_{t_{j+1}}| > k$. For each $i \leq k$, we let 
\begin{equation*}
    H_i=\prod_{\substack{j\leq k \\ j\neq i}}M_j, 
\end{equation*} $b_{i,0} = e$, and $b_{i,1},...,b_{i,k}$ be a sequence of distinct non-identity elements from $M_i$. We identify $(H_{i})_{i \leq k}$ and $(b_{i,j})_{i,j \leq k}$ with their corresponding images under the obvious inclusion maps into $G$. It is clear by construction that if $j < j' \leq k$ then $b_{i,j} \cdot H_i \cap b_{i,j'} \cdot H_i = \emptyset$. Moreover, for each $\eta\colon [k] \to [k]$, we have $b_{0,\eta(0)} \cdot ... \cdot b_{k,\eta(k)}  \in \bigcap_{i \leq k} b_{i,\eta(i)} \cdot H_{i}$. 
\item If $G$ contains an element without torsion, then $G$ contains a copy of $\mathbb{Z}$ and thus we may apply Theorem \ref{thm:general} and the same argument from (1). Otherwise, $G$ is a torsion group with an infinite torsion abelian subgroup. By Fact \ref{fact:subgroup}, we may find certain collections of finite subgroups, either a prime $p$ and a strictly increasing sequence of integers $(m_i)_{i < \omega}$ so that $G$ contains subgroups of the form $\mathbb{Z}/p^{m_i}\mathbb{Z}$ or finite products of finite abelian groups with arbitrarily many non-trivial components. 
\begin{enumerate}[(a)]
    \item First, suppose $G$ contains a sequence of subgroups of the form $\mathbb{Z}/p^{m_i}\mathbb{Z}$ for a strictly increasing sequence of integers $(m_i)_{i < \omega}$. There is an embedding from $(\ell^{1}(\mathbb{Z}),*)$ to $\prod_{D}(\ell^{1}(\mathbb{Z}/p^{m_i}\mathbb{Z}),*)$ (Proposition \ref{prop:discrete}), which is a substructure of $\prod_{D} (\ell^{1}(G),*)$. By Lemma \ref{lemma:TP2}, the existence of this embedding implies that $d(x * y, z) \dotdiv \frac{1}{2}$ has $\mathrm{TP_2}$ with respect to $\Th(\ell^{1}(G),*))$. Note that $d(x * y, y)$ has $\mathrm{IP}$ with respect to $\Th(\ell^{1}(G),*)$ since it is a quantifier free formula which has $\mathrm{IP}$ with respect to a substructure (i.e., the embedded copy of $(\ell^{1}(\mathbb{Z}),*)$).
    \item Now we suppose that $G$ has finite products of finite abelian groups with arbitrarily many components. Then the statement follows directly from Theorem \ref{thm:general} and the argument from case (2).   \qedhere
\end{enumerate}
\end{enumerate} 
\end{proof}

We now give an elementary argument to show that if $G$ is an infinite solvable group, then $(\ell^{1}(G),*)$ has $\mathrm{TP}_{2}$. To do so, we need a quick lemma.

\begin{lemma}\label{lemma:quotient group}
Let $G$ be a discrete group with a finite normal subgroup $H$. Let $K$ denote the quotient group $G/H$. Then $(\ell^1(K),*)$ metrically embeds into $(\ell^1(G),*)$.
\end{lemma}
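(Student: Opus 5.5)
The plan is to use the standard averaging map over cosets of $H$. Let $\pi\colon G \to K = G/H$ be the quotient map and set $m = |H|$. Define $\Phi\colon \ell^1(K) \to \ell^1(G)$ by
\begin{equation*}
\Phi(f)(g) = \frac{1}{m} f(\pi(g)),
\end{equation*}
so that $\Phi(\mathbf{1}_{kH}) = \frac{1}{m}\mathbf{1}_{gH}$ for any $g$ with $\pi(g) = kH$. Equivalently, $\Phi(f) = (f\circ\pi) * \mu_H$ where $\mu_H = \frac{1}{m}\mathbf{1}_H$, and this normalized indicator is an idempotent because $H$ is a finite subgroup. I would first define $\Phi$ on $c_{00}(K)$ and then extend by continuity, exactly as in Proposition \ref{prop:discrete}. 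Alternatively, I can observe directly that $\Phi$ makes sense on all of $\ell^1(K)$, since each fibre $\pi^{-1}(k)$ has exactly $m$ points.

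Next I would check the lattice and norm structure. $\Phi$ is clearly linear. It commutes with $\wedge$ and $\vee$, since these are computed pointwise and $\Phi$ is composition with $\pi$ followed by multiplication by a positive constant. It is isometric because
\begin{equation*}
\lVert \Phi(f)\rVert = \sum_{g \in G}\frac{1}{m}|f(\pi(g))| = \sum_{k\in K} m\cdot \frac{1}{m}|f(k)| = \lVert f\rVert.
\end{equation*}
In particular $\Phi$ is injective and preserves the metric.

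The main step, though still routine, is preservation of convolution. By bilinearity and continuity it suffices to check atoms. For $k_1 = g_1H$ and $k_2 = g_2 H$, I compute
\begin{equation*}
\Phi(\mathbf{1}_{k_1}) * \Phi(\mathbf{1}_{k_2}) = \frac{1}{m^2}\mathbf{1}_{g_1H} * \mathbf{1}_{g_2H}.
\end{equation*}
Evaluated at $x$, this counts the pairs $(y,z) \in g_1H \times g_2H$ with $yz = x$. Normality gives $g_1Hg_2H = g_1g_2H$. For $x \in g_1g_2H$, each $y \in g_1H$ determines $z = y^{-1}x$, and $z$ lies in $H g_1^{-1} g_1 g_2 H = g_2H$ by normality. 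So there are exactly $m$ such pairs, and the count is $0$ for $x \notin g_1g_2H$. Hence the product equals $\frac{1}{m}\mathbf{1}_{g_1g_2H} = \Phi(\mathbf{1}_{k_1k_2}) = \Phi(\mathbf{1}_{k_1} * \mathbf{1}_{k_2})$.

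The only point needing care is this normality and counting argument, which is where finiteness of $H$ is needed (to normalize by $1/m$). The continuity extension then follows from Young's inequality and the isometry.
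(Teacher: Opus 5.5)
Your proposal is correct and follows essentially the same route as the paper: the same averaging map $f\mapsto \frac{1}{|H|}\,f\circ\pi$, an isometry check by counting the $|H|$ points in each fibre, and a convolution check. You carry out that check on atoms, using normality to count the pairs in $g_1H\times g_2H$, and extend by bilinearity and continuity, whereas the paper computes $(\tilde f*\tilde g)(x)$ for arbitrary $f,g$ by grouping the sum over $G$ into fibres of $\pi$. One harmless slip: your aside $\Phi(f)=(f\circ\pi)*\mu_H$, which the argument never uses, is off by a factor of $|H|$, since $(f\circ\pi)*\mu_H=f\circ\pi=|H|\,\Phi(f)$.
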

\begin{proof}
Let $\pi\colon G\to K=G/H$ denote the quotient map.  
For every $f\in \ell^1(K)$, define $\tilde{f}\colon G\to\mathbb{R}$ by $\tilde{f}(x)=\frac{f(\pi(x))}{|H|}$, for $x\in G$. 
It is straightforward to verify that $\tilde{f}\in\ell^1(G)$ and furthermore $\lVert f \rVert_{\ell^1(H)}= \lVert \tilde{f} \rVert_{\ell^1(G)}$. We verify that the map sending $f$ to $\tilde{f}$ preserves convolution. 

\begin{claim-star} For all $f,g\in\ell^1(K)$ we have  that $\tilde{f}*\tilde{g}=\widetilde{f*g}$.
\end{claim-star}
\begin{claimproof}

Fix $x \in G$ and let us show that $(\tilde{f}*\tilde{g})(x)=(\widetilde{f*g})(x)$. On one side, we have 

\begin{equation*}
    (\widetilde{f*g})(x)=\frac{(f*g)(\pi(x))}{|H|}=\frac{1}{|H|}\sum_{k\in K}f(k)g(k^{-1}\pi(x)).
\end{equation*}
On the other hand,
\begin{align*}
    (\tilde{f}*\tilde{g})(x)=\frac{1}{|H|^2}\sum_{y\in G}f(\pi(y))g(\pi(y^{-1}x))=\frac{1}{|H|^2}\sum_{y\in G}f(\pi(y))g(\pi(y^{-1})\pi(x)).
\end{align*}
For each $k\in K$, the set $\pi^{-1}(k)$ has $|H|$-many preimages and for $y_1,y_2\in \pi^{-1}(k)$ we have $\tilde{f}(y_1)=\tilde{f}(y_2)=\frac{f(k)}{|H|}$, thus $\sum_{y\in \pi^{-1}(k)}\tilde{f}(y)=f(k)=\frac{\sum_{y\in \pi^{-1}(k)}f(k)}{|H|}$. Thus,
\begin{align*}(\tilde{f}*\tilde{g})(x) &=\frac{1}{|H|^2}\sum_{y\in G}f(\pi(y))g(\pi(y^{-1})\pi(x)) \\ 
&=\frac{1}{|H|^2}\sum_{k\in K}\sum_{y\in \pi^{-1}(k)}f(\pi(y))g(\pi(y^{-1})\pi(x)) \\
&=\frac{1}{|H|}\sum_{k\in K}\sum_{y\in \pi^{-1}(k)}\frac{1}{|H|}f(k)g(k^{-1}\pi(x)) \\
&=\frac{1}{|H|}\sum_{k\in K}f(k)g(k^{-1}\pi(x))=(\widetilde{f*g})(x).
\end{align*}
\end{claimproof}

Thus, $(\ell^1(K),*)$ is a substructure in $(\ell^1(G),*)$ as a convolution algebra.
\end{proof}

\begin{proposition}\label{prop:solvable} If $G$ is an infinite solvable group, then the formula $d(x * y,z)\dotdiv\frac{1}{2}$ had $\mathrm{TP}_2$ and $d(x * y, y)$ has $\mathrm{IP}$ relative to $\Th(\ell^{1}(G),*)$. 
\end{proposition}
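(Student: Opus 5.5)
Since every infinite solvable group contains an infinite abelian subgroup (the fact credited to Jinhe Ye in the acknowledgments), the statement follows from Corollary \ref{cor:tp2}(3). The paper, however, promises an elementary argument through Lemma \ref{lemma:quotient group}, so I would prove it directly by induction on the derived length of $G$, combining the quotient-embedding lemma with Lemma \ref{lemma:TP2}.

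\textbf{Setup and base case.} Let $G = G^{(0)} \supseteq G^{(1)} \supseteq \dots \supseteq G^{(d)} = \{e\}$ be the derived series. If $d=1$, then $G$ is infinite abelian and Corollary \ref{cor:tp2}(3) applies directly.

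\textbf{Inductive step.} For $d>1$, set $A = G^{(d-1)}$. This is a nontrivial abelian normal subgroup, so there are two cases.
\begin{enumerate}
\item If $A$ is infinite, then $G$ contains an infinite abelian subgroup, and Corollary \ref{cor:tp2}(3) gives both conclusions.
\item If $A$ is finite, then $K = G/A$ is infinite, since $G$ is infinite and $A$ is finite. Moreover $K$ is solvable of derived length at most $d-1$. By induction, $d(x*y,z)\dotdiv\frac12$ has $\mathrm{TP}_2$ and $d(x*y,y)$ has $\mathrm{IP}$ relative to $\Th(\ell^1(K),*)$. Lemma \ref{lemma:quotient group} then gives a metric embedding of $(\ell^1(K),*)$ into $(\ell^1(G),*)$.
\end{enumerate}

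\textbf{Transferring $\mathrm{TP}_2$.} To move $\mathrm{TP}_2$ across the embedding, I would apply Lemma \ref{lemma:TP2} with $K$ in the role of its group and $M=(\ell^1(G),*)$. This requires an array witnessing $\mathrm{TP}_2$ in some model of $\Th(\ell^1(K),*)$ whose rows satisfy $d(b_{i,j},b_{i,k})\ge 1+\epsilon$. That separation is not part of the bare conclusion of the induction hypothesis. I would therefore strengthen the induction to assert: \emph{$\Th(\ell^1(G),*)$ has such a separated witness, and $d(x*y,y)$ has $\mathrm{IP}$ in some model.}

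The strengthened statement holds in the abelian case. The witnesses from Theorem \ref{prop:general} have $d(g_{i,j},g_{i,j'})=2$. In the Lemma \ref{lemma:TP2} route used in Corollary \ref{cor:tp2}, the parameters live in an ultrapower of the embedded copy and are computed by quantifier-free data, so distances are preserved. Hence the separation survives in $\prod_D M$.

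The induction also propagates the strengthening. The proof of Lemma \ref{lemma:TP2} produces the array inside $\prod_D M$ with the same pairwise distances, because the embedding is isometric. So the separated witness passes to $\Th(\ell^1(G),*)$.

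\textbf{Transferring $\mathrm{IP}$.} Since $d(x*y,y)$ is quantifier free, $\mathrm{IP}$ realized in an ultrapower of $(\ell^1(K),*)$ transfers to the corresponding substructure of an ultrapower of $(\ell^1(G),*)$. This is exactly as in Theorem \ref{thm:general}.

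\textbf{Main obstacle.} The main subtlety is this bookkeeping of the separation hypothesis (2) of Lemma \ref{lemma:TP2} through the induction, together with making sure a model of $\Th(\ell^1(K),*)$ sits inside an ultrapower of $(\ell^1(G),*)$. I would handle the latter by taking the ultrapower of the embedding.

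The other step needing care is the claim that $A$ finite forces $G/A$ infinite with smaller derived length. Both parts are immediate: the first because $A$ is finite and $G$ infinite, the second because $(G/A)^{(d-1)}$ is the image of $G^{(d-1)}=A$.
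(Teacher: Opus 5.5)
Your proof is correct. It uses the same three ingredients as the paper (Lemma~\ref{lemma:quotient group}, Lemma~\ref{lemma:TP2} and Corollary~\ref{cor:tp2}(3)), but organizes them differently.

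You induct on derived length, quotienting by the bottom term $G^{(d-1)}$ whenever it is finite. This forces you to apply Lemma~\ref{lemma:TP2} repeatedly, so you must strengthen the induction hypothesis to make the separation condition (2) survive each transfer.

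The paper avoids induction. It takes the last infinite term $G^{(j)}$ of the derived series. Then $G^{(j+1)}$ is a finite normal subgroup of $G^{(j)}$, so $K=G^{(j)}/G^{(j+1)}$ is already an infinite abelian group. Composing $\ell^1(K)\hookrightarrow \ell^1(G^{(j)})\hookrightarrow \ell^1(G)$ then needs only a single application of Lemma~\ref{lemma:TP2}.

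Your route also collapses to one step. Since $G^{(j+1)}$ is characteristic, it is a finite normal subgroup of $G$ itself. So a single quotient embedding $\ell^1(G/G^{(j+1)})\hookrightarrow \ell^1(G)$ suffices, and $G/G^{(j+1)}$ contains the infinite abelian subgroup $G^{(j)}/G^{(j+1)}$.

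What your version buys is explicit bookkeeping of hypothesis (2) of Lemma~\ref{lemma:TP2}. The paper silently uses that the witnesses produced by Corollary~\ref{cor:tp2} have rows at distance $2$. Your observation that this separation is kept through compactness and through the isometric ultrapower embedding is exactly what justifies that step, even in the paper's one-step version.
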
 

\begin{proof} Using Lemma \ref{lemma:quotient group}, we prove that there exists an infinite abelian group $H$ such that $(\ell^{1}(H),*)$ metrically embeds into $(\ell^{1}(G),*)$. By Lemma \ref{lemma:TP2} and Corollary \ref{cor:tp2}, we will conclude that $\Th(\ell^{1}(G),*)$ witnesses the appropriate dividing lines ($\mathrm{TP}_2$ and $\mathrm{IP}$ for the respective formulas).

The derived series of $G$ is defined as follows:
$$G\trianglerighteq G'\trianglerighteq G''\cdots\trianglerighteq G^{(n)}\trianglerighteq\cdots,$$
where $G'=[G,G]$ is the commutator subgroups of $G$, and, inductively, we set $G^{(n)}=[G^{(n-1)},G^{(n-1)}]$ for each $n\in \mathbb{N}$. Also, each quotient group $G^{(n)}/G^{(n+1)}$ is abelian for each $n\in \mathbb{N}$. Since $G$ is solvable, $G^{(n)}=1$ for some $n\in \mathbb{N}$. 
If $G^{(n-1)}$ is infinite, then it is a countable abelian group and thus $G$ contains an infinite normal subgroup. By (3) of Corollary \ref{cor:tp2}, the conclusion of the Proposition holds. On the other hand, if $G^{(n-1)}$ is finite, then there is $k>1$ such that $G^{(n-k)}$ is infinite and $G^{(n-i)}$ is finite for each $1\leq i\leq k-1$. Then $G^{(n-k-1)}$ is a finite normal subgroup of $G^{(n-k)}$, and $G^{(n-k)}/G^{(n-k-1)}$ is an infinite abelian group, which we will denote by $K$. By Lemma \ref{lemma:quotient group}, $(\ell^1(K),*)$ is a substructure inside the convolution algebra $(\ell^1(G^{(n-k)}),*)$ which is itself a substructure of $(\ell^1(G),*)$. By Lemma \ref{lemma:TP2} and Corollary \ref{cor:tp2}, we conclude that the conclusion of the Proposition holds. 
\end{proof}

\begin{remark}  It was pointed out to us by Jinhe Ye that every infinite solvable group contains an infinite abelian subgroup and thus Corollary \ref{cor:tp2} applies. We thank him for allowing us to include his argument in our article. 
\end{remark}

We now give a proof that every infinite solvable group contains an infinite abelian subgroup. We first need some definitions. An \textbf{FC-group} is a group such that every element has only finitely many conjugates. An FC-group $G$ is a \textbf{BFC-group} if there exists $n \in \mathbb{N}$ such that for each $b \in G$, its conjugacy class $\{g^{-1}bg : g \in G\}$ has size bounded by $n$. The following is a characterization of such groups due to Neumann \cite{neumann1954groups}.

\begin{fact}\label{Neumann}
A group $G$ is a $\mathrm{BFC}$-group if and only if $G' = [G, G]$ is finite.
\end{fact}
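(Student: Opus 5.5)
The direction ``$G'$ finite $\Rightarrow$ $G$ is BFC'' is immediate. For $b,g \in G$ we have $g^{-1}bg = b\,[b,g]$ with $[b,g] \in G'$, so the conjugacy class of $b$ lies in the coset $bG'$. Every conjugacy class therefore has at most $|G'|$ elements, and $G$ is BFC with bound $|G'|$.

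For the converse, assume every class has size at most $n$. The plan has three stages. First, show that $G$ has only finitely many commutators. Second, show that $G'$ is a finitely generated group whose center has finite index. Third, apply Schur's theorem together with a torsion argument to conclude that $G'$ is finite.

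\emph{Stage one.} Choose $x \in G$ whose class has the maximal size $m \le n$, and let $t_1,\dots,t_m$ be a transversal of $C_G(x)$. Put $K = C_G(x) \cap \bigcap_{i} C_G(t_i)$. Each centralizer has index at most $n$, so $K$ has finite index in $G$. For $y \in K$ we have $(xy)^{t_i} = x^{t_i}y$, and these $m$ elements are distinct. By maximality of $m$, the class of $xy$ is exactly $\{x^{t_i}y : i \le m\}$. Hence for any $g \in G$ there are $i,j$ with $x^g y^g = x^{t_i} y$ and $x^g = x^{t_j}$. This gives $y^{-1}y^g = y^{-1}(x^{t_j})^{-1}x^{t_i}y$. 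Since $y$ centralizes $x$ and every $t_k$, it centralizes each $x^{t_k}$, so $y^{-1}y^g \in F := \{(x^{t_j})^{-1}x^{t_i} : i,j \le m\}$, a set of size at most $m^2$. Thus $[y,g] \in F$ for all $y\in K$ and $g \in G$.

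Next write $G = \bigcup_{s\in S} Ks$ with $S$ finite. Using the commutator identities $[ks,g] = [k,g]^s[s,g]$, the facts that each $[s,g]$ takes at most $n$ values (it equals $s^{-1}s^g$), and that $F$ is a finite normal set (conjugates of elements of $F$ are again of the form $(x^{u})^{-1}x^{v}$), I expect every commutator of $G$ to lie in a fixed finite set. Making this bookkeeping precise is the main obstacle; if it resists, I would fall back on Neumann's original counting argument from \cite{neumann1954groups}.

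\emph{Stages two and three.} Once the set of commutators is finite, $G'$ is generated by finitely many elements $c_1,\dots,c_r$. Each $c_i$ has at most $n$ conjugates, so $Z(G') \supseteq G' \cap \bigcap_i C_G(c_i)$ has finite index in $G'$. By Schur's theorem, $G''$ is finite. Then $G'/G''$ is a finitely generated abelian group generated by the images of finitely many commutators. Each commutator has finite order: in a BFC group, $[x,y]^{k}$ can be expressed through the transfer into a finite-index abelian-by-finite section, or directly, $\langle c_i\rangle$ modulo the finite normal subgroup has finitely many conjugates and lies in the center of finite index, so the transfer shows a bounded power vanishes. Hence $G'/G''$ is finite, and therefore $G'$ is finite.
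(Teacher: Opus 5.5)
The paper gives no proof of this fact. It is quoted from Neumann, and only the easy direction ($G'$ finite implies BFC) is used later, in Ye's argument; your proof of that direction is correct.

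For the converse, Stage one is correct and is in fact already complete. Because $x^{t_1},\dots,x^{t_m}$ exhaust $x^G$, you have $F=\{u^{-1}v : u,v\in x^G\}$, and this set is closed under conjugation. So for $k\in K$, $s\in S$, $g\in G$, the identity $[ks,g]=[k,g]^s[s,g]$ places every commutator in $\bigcup_{s\in S}F\cdot\{s^{-1}s^g : g\in G\}$. That set has at most $|G:K|\,m^2n$ elements, so the bookkeeping obstacle you anticipate does not arise. Stage two is also correct: $Z(G')=G'\cap\bigcap_i C_G(c_i)$ has finite index in $G'$, and Schur's theorem gives $G''$ finite.

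The gap is in Stage three. The claim that each commutator has finite order is true, but neither justification you offer proves it. A transfer only kills commutators of the group on which it is defined.
\begin{itemize}
\item Transferring $G'$ into $Z(G')$ gives $V(c_i)=c_i^k$. There is no reason for this to vanish, since $c_i$ need not be a commutator of elements of $G'$.
\item $G$, or $G/G''$, need not have a center of finite index at all. An infinite extraspecial $p$-group is BFC with $G''=1$ and $|G:Z(G)|$ infinite. So the ``center of finite index'' you appeal to need not exist.
\end{itemize}

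The missing idea is to pass to a finitely generated subgroup in which the $c_i$ genuinely are commutators. Write $c_i=[a_i,b_i]$ and let $L=\langle a_1,b_1,\dots,a_r,b_r\rangle$. Each generator of $L$ has at most $n$ conjugates, so $Z(L)\supseteq L\cap\bigcap_i(C_G(a_i)\cap C_G(b_i))$ has finite index in $L$. Schur's theorem then gives $L'$ finite, and $G'=\langle c_1,\dots,c_r\rangle\le L'$. This finishes the proof directly after Stage one, so Stages two and three can be dropped.

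If you want to keep your structure, the same trick gives the torsion claim. For $H=\langle a,b\rangle$ with $k=|H:Z(H)|$, the transfer $H\to Z(H)$ is $h\mapsto h^k$, so $[a,b]^k=1$.
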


\begin{proof}[Alternative proof of Proposition \ref{prop:solvable} due to Jinhe Ye]

By Corollary \ref{cor:tp2}, it suffices to prove that if $G$ is an infinite solvable group, then $G$ contains an infinite abelian subgroup. Consider the derived series of $G$. Then there must exist an index  $i$ such that $G_i$ is  infinite and $G_{i+1} = [G_i, G_i]$ is finite. We will construct an infinite abelian subgroup of $G_i$. To make things easier, we use $G, G'$ to denote $G_i, G_{i+1}$ respectively. By Fact \ref{Neumann}, $G$ is $\mathrm{BFC}$.

Suppose that $G$ does not contain any infinite abelian subgroup. Then $G$ must contain a maximal finite abelian subgroup. Otherwise, there exists a chain of finite abelian subgroups whose union is an infinite abelian subgroup, which is a contradiction. Let $A$ be a maximal finite abelian subgroup of $G$. By the fact that $G$ is $\mathrm{BFC}$, for each $x \in G$, $C_G(x) = \{g \in G : gx = xg\}$ has finite index in $G$, since its index is the size of the conjugacy class $\{g^{-1}xg : g \in G\}$.
Since $A$ is finite, its centralizer $C_G(A) = \bigcap_{a \in A} C_G(a)$ is a finite intersection of subgroups of $G$ of finite index, and thus $[G : C_G(A)]$ is also finite. In particular, $C_G(A)$ is infinite.
Take $x \in C_G(A)\backslash A$. Since $x$ commutes with every element in $A$, the subgroup generated by $A$ and $x$ is abelian. This contradicts the maximality of $A$. 
\end{proof}

We end this subsection with some open questions. 

\begin{question} Suppose that $G$ is an infinite amenable group. Does the formula $d(x * y, z) \dotdiv \frac{1}{2}$ have $\mathrm{TP}_{2}$ relative to the theory of the convolution algebra $(\ell^{1}(G),*)$? We remark that if every infinite amenable group contains an infinite abelian group, then the statement is true by Corollary \ref{cor:tp2}. To our knowledge, this question is still open (see e.g., \cite[Question 11.1]{Bergelson2008QuestionsAmenability}). 
\end{question} 

\begin{question}\label{Question:Tarski} Are there any infinite groups such that the theory of the convolution algebra $(\ell^{1}(G),*)$ is $\mathrm{NTP}_{2}$? Suppose that $G$ is a Tarski monster, i.e., $G$ is an infinite $p$-group such that any pair of non-commuting elements generate $G$. Does the formula $d(x * y, z) \dotdiv \frac{1}{2}$ have $\mathrm{TP}_{2}$ relative to the theory of the convolution algebra $(\ell^{1}(G),*)$? Does $d(x *y, y)$ have $\mathrm{IP}$? In this case, the obstacle is that $G$ does not have any infinite amenable subgroups.
\end{question} 

\subsection{Locally compact groups} In this section, we prove that if $G$ is locally compact and admits elements of arbitrarily large finite order, then the formulas from the previous subsection have $\mathrm{TP}_{2}$ and $\mathrm{IP}$ with respect to $Th(L^{1}(G),*)$. The parameters are different than the ones presented in the previous section. The idea is the following: For every natural number $n$, we find a small neighborhood of the identity $U$ and an element $a$ such that the translates $U,aU,a^{2}U,...,a^{n}U$ are all pairwise disjoint. This configuration will encode a large enough portion of the group of integers so that we may witness $\mathrm{TP_{2}}$ in a similar fashion to the one we used in previous subsection. We also need to work with an approximate identity, yet this complexity will be smoothed out by working over ultrapowers. 

The following proposition is a simplified version of a statement from Folland's text on Harmonic Analysis \cite[Proposition 2.44]{folland2016harmonic}. 

\begin{proposition}\label{prop:approx_id} Fix $G$ a locally compact group. Let $\mathcal{O}$ be a neighborhood base at $e$ in $G$. For each $U \in \mathcal{O}$, let $\psi_{U}$ be a function such that 
\begin{enumerate}
    \item $\supp (\psi_{U})$ is compact and contained in $U$. 
    \item $\psi_{U}(x) = \psi_{U}(x^{-1})$ for every $x \in G$.
    \item $\psi_{U} \geq 0$ and $\int_{G} \psi_{U} d\mu = 1$. 
\end{enumerate}
Then for every $f \in L^{1}(G)$ we have both 
\begin{equation*}
    \lim_{U \in \mathcal{O}} \lVert \psi_{U} * f - f \rVert_{L^{1}(G)} = 0, 
\end{equation*}
and, 
\begin{equation*}
    \lim_{U \in \mathcal{O}} \lVert f *  \psi_{U} - f \rVert_{L^{1}(G)} = 0. 
\end{equation*} 
We call $(\psi_{U})_{U \in \mathcal{O}}$ an \textbf{approximate identity}. Since $\lVert \psi_{U}\lVert_{L^{1}(G)}=1$ it is also called a \textbf{bounded approximate identity} or just \textbf{b.a.i.} 
\end{proposition}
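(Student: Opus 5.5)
We argue via density and continuity of translation in $L^{1}(G)$. First, it suffices to treat $f \in C_{c}(G)$: given $f \in L^{1}(G)$ and $\epsilon>0$, pick $g \in C_c(G)$ with $\lVert f-g\rVert<\epsilon$. By Young's inequality and $\lVert\psi_U\rVert=1$,
\[
\lVert \psi_U*f-f\rVert \le \lVert \psi_U*(f-g)\rVert+\lVert \psi_U*g-g\rVert+\lVert g-f\rVert \le 2\epsilon+\lVert \psi_U*g-g\rVert ,
\]
and the same bound holds with $\psi_U$ on the right.

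The key input is continuity of translations in $L^1$. Define $L_y f(x)=f(y^{-1}x)$ and $R_y f(x)=f(xy)$. For $g\in C_c(G)$, uniform continuity of $g$ (left and right) on its compact support gives $\lVert L_y g-g\rVert_1\to 0$ and $\lVert R_y g - g\rVert_1 \to 0$ as $y\to e$. For $R_y$ one also needs that the supports stay inside a fixed compact set $K\cdot V$, and that $\mu$ is finite there. Note that $R_y$ scales the $L^1$ norm by the modular function $\Delta(y)^{-1}$, which is continuous with $\Delta(e)=1$, so this causes no trouble near $e$.

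For the left statement, write
\[
(\psi_U*g - g)(x)=\int_G \psi_U(y)\bigl(g(y^{-1}x)-g(x)\bigr)\,d\mu(y),
\]
using $\int\psi_U=1$. Apply Minkowski's integral inequality (Fubini/Tonelli) to get
\[
\lVert \psi_U*g-g\rVert_1\le \int_U \psi_U(y)\,\lVert L_y g-g\rVert_1\,d\mu(y)\le \sup_{y\in U}\lVert L_yg-g\rVert_1 ,
\]
which tends to $0$ along $\mathcal O$.

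For the right statement, substitute $z=y^{-1}x$ in $g*\psi_U(x)=\int g(y)\psi_U(y^{-1}x)\,dy$ to get $\int g(xz^{-1})\psi_U(z^{-1})\ldots$; it is here that the symmetry $\psi_U(x)=\psi_U(x^{-1})$ is used. The cleanest route is to write
\[
f*\psi_U(x)=\int f(xy)\,\psi_U(y^{-1})\,d\mu(y)=\int f(xy)\,\psi_U(y)\,d\mu(y)
\]
(the substitution $y\mapsto xy$ is valid because $\mu$ is a left Haar measure). Then $f*\psi_U-f=\int \psi_U(y)(R_yf-f)\,dy$, and Minkowski gives the bound $\sup_{y\in U}\lVert R_yg-g\rVert_1$.

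The main obstacle is this right-sided estimate. On a non-unimodular group, right translation is not an $L^1$ isometry, and the standard formula for $f*\psi$ involves $\Delta$. The step to get right is therefore to verify that $f*\psi_U(x)=\int f(xy)\psi_U(y^{-1})\,d\mu(y)$ holds with no modular factor (it does, by left invariance), and that $\lVert R_yg-g\rVert_1\to0$ for $g\in C_c(G)$. The latter follows from right uniform continuity of $g$ together with bounded support, so dominated convergence suffices; no isometry is needed.
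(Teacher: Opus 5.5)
Your proof is correct and is essentially the standard argument of Folland's Proposition 2.44, which the paper cites in lieu of giving its own proof: reduce to $C_c(G)$ via Young's inequality, write $\psi_U*f-f$ and $f*\psi_U-f$ as $\psi_U$-weighted averages of $L_yf-f$ and $R_yf-f$ (using left invariance and the symmetry of $\psi_U$ for the right-hand case), and conclude with Minkowski's integral inequality and continuity of translations. The only slip is the abandoned substitution $z=y^{-1}x$, which as written is wrong: it actually yields $g(xz^{-1})\psi_U(z)\Delta(z^{-1})$, not a $\psi_U(z^{-1})$ term. The route through $y\mapsto xy$ that you actually rely on is correct.
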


The following fact is elementary and follows directly from an exercise in Munkres's text on point-set topology \cite[Page 146, Exercise 7(b)]{Munkres}. 

\begin{fact}\label{fact:translate} Let $G$ be a Hausdorff topological  group and $g_1,...,g_n$ be distinct elements in $G$. Then there exists an open neighborhood $V$ of the identity such that for each $i \neq j$, $g_i V \cap g_j V = \emptyset$. 
\end{fact}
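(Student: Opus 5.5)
We need the translates $g_iV$ to be pairwise disjoint. The plan is to combine Hausdorff separation with continuity of the group operations, working one pair at a time and then intersecting finitely many neighborhoods.

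\emph{Step 1 (one pair).} Fix $i\neq j$. Since $g_i\neq g_j$ and $G$ is Hausdorff, choose disjoint open sets $O_i\ni g_i$ and $O_j\ni g_j$. The left translation maps $x\mapsto g_ix$ and $x\mapsto g_jx$ are homeomorphisms sending $e$ to $g_i$ and $g_j$ respectively. Hence $V_{i,j}:=g_i^{-1}O_i\cap g_j^{-1}O_j$ is an open neighborhood of $e$. It satisfies $g_iV_{i,j}\subseteq O_i$ and $g_jV_{i,j}\subseteq O_j$, so $g_iV_{i,j}\cap g_jV_{i,j}=\emptyset$.

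\emph{Step 2 (all pairs).} Set
\begin{equation*}
V=\bigcap_{i\neq j}V_{i,j}.
\end{equation*}
This is a finite intersection of open neighborhoods of $e$, hence itself an open neighborhood of $e$. Since $V\subseteq V_{i,j}$ for every $i\neq j$, we get $g_iV\cap g_jV\subseteq g_iV_{i,j}\cap g_jV_{i,j}=\emptyset$.

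There is no real obstacle here. The only point to check is that left translation is a homeomorphism, which holds because multiplication is continuous and $x\mapsto g^{-1}x$ is its continuous inverse.

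If one later wants to apply this to powers $e,a,a^2,\dots,a^n$, note a possible variant. Equivalently, the condition $g_iV\cap g_jV=\emptyset$ means $g_j^{-1}g_i\notin VV^{-1}$. By continuity of $(x,y)\mapsto xy^{-1}$ at $(e,e)$, one can shrink $V$ so that $VV^{-1}$ avoids the finitely many non-identity elements $g_j^{-1}g_i$, since $G\setminus\{g_j^{-1}g_i\}$ is open. This gives an alternative one-line proof along the same lines.
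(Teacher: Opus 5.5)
Your argument is correct, but your main route differs from the paper's. The paper's proof is essentially the alternative you sketch at the end. It picks one neighborhood $U$ of $e$ that misses the finitely many non-identity elements $g_i^{-1}g_j$. Continuity of $(x,y)\mapsto xy^{-1}$ at $(e,e)$ then gives $V$ with $VV^{-1}\subseteq U$. Finally, $g_ja=g_ib$ with $a,b\in V$ would force $g_i^{-1}g_j=ba^{-1}\in U$, a contradiction.

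Your main proof instead separates each pair $g_i,g_j$ by disjoint open sets, pulls them back to $e$ through the left translations by $g_i$ and $g_j$, and intersects the finitely many resulting neighborhoods.

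Your version is more economical in hypotheses. It uses only that $G$ is Hausdorff and that each left translation is a homeomorphism. So it works verbatim in any Hausdorff semitopological group, with no joint continuity of multiplication and no continuity of inversion.

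The paper's version packages the argument into the reusable condition $VV^{-1}\subseteq U$. For the separation step, it only needs a neighborhood of $e$ that avoids finitely many other points, rather than pairwise disjoint open sets around the $g_i$.

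For the application in Theorem~\ref{thm:ContIP}, to the distinct powers $a_m,a_m^2,\dots,a_m^{2m}$, either version suffices.
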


\begin{proof} Since the sequence of elements $g_1,...,g_n$ are all distinct, we have that $g_i^{-1} g_j \neq e$. Since the group is Hausdorff, there exists an open neighborhood $U$ of $e$ that does not contain $\{g_i^{-1}g_j: 1 \leq i \neq j \leq n\}$. Since multiplication and inversion are both continuous, the map $f\colon G\times G \to G$ defined by $f(x,y) = x \cdot y^{-1}$ is also continuous. Since $f$ is continuous at $e$, we can find open sets $ W_1,W_2$ containing the identity such that $f(W_1,W_2) \subseteq U$. Let $V = W_1 \cap W_2$, then $V$ is an open set around $e$ such that $V \cdot V^{-1} \subseteq U$.

We claim that for each $i \neq j$ we have $g_i V \cap g_j V = \emptyset$. Suppose not, then there exists $a,b \in V$ such that $g_j a = g_i b$. So, $g_i^{-1} g_j = b a^{-1} \in V \cdot V^{-1} \subseteq U$. This is a contradiction. 
\end{proof}

\begin{definition} Let $G$ be a locally compact group. Then $G$ acts on $L^{1}(G)$ both on the left and on the right. For $a \in G$ and $f \in L^{1}(G)$, we let $(a \cdot f)(x) = f(a^{-1} \cdot x)$ and $(f \cdot a)(x) = f(x \cdot a^{-1})$. 
\end{definition}

\begin{fact}\label{fact:compute} Let $G$ be a locally compact group and suppose that $f,h \in L^{1}(G)$ and $a \in G$. Then $(f \cdot a) * h = f * (a \cdot h)$.  
\end{fact}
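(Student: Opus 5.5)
The plan is a direct computation from the definition of convolution, $(f*g)(x)=\int_G f(y)g(y^{-1}x)\,d\mu(y)$, with a single change of variables. Both sides lie in $L^{1}(G)$ by Young's inequality, and translation preserves integrability. It therefore suffices to compare the two integrands pointwise at almost every $x$. By density of $C_c(G)$ in $L^{1}(G)$ and continuity of both sides in $f$ and $h$, we may also assume $f,h\in C_c(G)$, so that Fubini and the substitutions below are unproblematic.

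\textbf{Step 1: expand both sides.} Using $(f\cdot a)(y)=f(ya^{-1})$,
\begin{equation*}
((f\cdot a)*h)(x)=\int_G f(ya^{-1})\,h(y^{-1}x)\,d\mu(y).
\end{equation*}
Using $(a\cdot h)(w)=h(a^{-1}w)$,
\begin{equation*}
(f*(a\cdot h))(x)=\int_G f(z)\,h(a^{-1}z^{-1}x)\,d\mu(z).
\end{equation*}

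\textbf{Step 2: substitute $y=za$ in the first integral.} Then $ya^{-1}=z$ and $y^{-1}x=a^{-1}z^{-1}x$, so the integrands agree exactly. The remaining question is how $d\mu(y)$ transforms under this right translation.

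\textbf{Step 3: the measure under right translation (the main obstacle).} Since $\mu$ is a \emph{left} Haar measure, right translation rescales it by the modular function: $\int_G F(za)\,d\mu(z)=\Delta(a^{-1})\int_G F(y)\,d\mu(y)$. Applying this to the first integral gives
\begin{equation*}
((f\cdot a)*h)(x)=\Delta(a)\,(f*(a\cdot h))(x).
\end{equation*}
\begin{itemize}
\item If $G$ is unimodular, then $\Delta\equiv 1$ and the identity holds exactly. This covers every group used in this section: discrete, compact, abelian, and connected abelian Lie groups.
\item If $G$ is not unimodular, the identity holds exactly only after normalizing the right action, i.e.\ redefining $(f\cdot a)(x)=\Delta(a^{-1})f(xa^{-1})$. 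Otherwise it holds up to the positive scalar $\Delta(a)$.
\end{itemize}
For the applications this scalar is harmless in one case and not in another. The supports of $(f\cdot a)*h$ and $f*(a\cdot h)$ coincide regardless of $\Delta(a)$, which is all that disjointness arguments need. When norms matter, I would work with norm-one normalized functions, or else assume unimodularity as in the cases treated. I would state this caveat explicitly alongside the computation.
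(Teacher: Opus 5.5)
Your computation is the same single change of variables ($t=sa$) as the paper's proof, and your extra care about the measure is warranted. That substitution is a \emph{right} translation, while $\mu$ is only a left Haar measure, so the paper's one-line argument tacitly assumes right invariance.

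As you derive, in general $((f\cdot a)*h)=\Delta_G(a)\,(f*(a\cdot h))$, where $\Delta_G$ is the modular function. So the Fact as stated holds for all $f,h,a$ exactly when $G$ is unimodular. Your normalized action $(f\cdot a)(x)=\Delta_G(a^{-1})f(xa^{-1})$ is the right repair, and it also makes $f\mapsto f\cdot a$ an isometry.

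One correction to your caveat: unimodularity does not cover the places where the paper uses this Fact. Proposition~\ref{prop:cont_amen} and Theorem~\ref{thm:ContIP} are stated for arbitrary locally compact groups. For instance, the $ax+b$ group is amenable, non-compact and non-unimodular, and it has elements of infinite order. In those arguments the exact identity is needed, not just equality of supports, e.g.\ to get $d(\Psi_{t_\eta}*f_i,g_{i,\eta(i)})=0$. So the translating elements $\Psi_a$ must be built from your normalized translates. This also keeps them in the unit ball, since the unnormalized $\Psi_a$ has norm $\Delta_G(a)$.
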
 

\begin{proof} Let $f,h,g$ be as above. Fix $x \in G$. Then 
\begin{align*} 
((f \cdot a) * h)(x) &= \int_{t \in G} f(t \cdot a^{-1})h(t^{-1} \cdot x) d\mu \\ 
&\overset{s = t \cdot a^{-1}}{=} \int_{s \in G} f(s) h(a^{-1} \cdot s^{-1} \cdot x) d\mu \\ 
&= (f * ( a \cdot h))(x). \qedhere
\end{align*} 
\end{proof}

Before proving the main theorem of this subsection, we take the opportunity to prove a variant of Proposition \ref{prop:unstable} in the more general locally compact setting.  This will give us intuition on how to translate the tools of the previous section to the non-discrete setting.

\begin{proposition}\label{prop:cont_amen} Suppose that $G$ is a locally compact, non-compact, amenable group with fixed Haar measure $\mu$. Then $\Th(L^{1}(G),*)$ is unstable.
\end{proposition}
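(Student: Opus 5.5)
We mimic the proof of Proposition \ref{prop:unstable}, replacing finite F\o lner sets by compact F\o lner sets and atoms $\mathbf{1}_b$ by approximate-identity translates. Work with the formula $\varphi(x,y)=\lVert x*y-y\rVert$. Since $G$ is amenable and locally compact, there is a F\o lner net of compact sets $K$ of positive finite Haar measure: for every compact $C\subseteq G$ and $\epsilon>0$ there is a compact $K$ with $\mu(cK\triangle K)/\mu(K)<\epsilon$ for all $c\in C$. We also use that $G$ is non-compact. Hence for any compact $L$ there is $b\in G$ with $bL\cap L=\emptyset$; otherwise $G=LL^{-1}$ would be compact.

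The main obstacle is that $\mathbf{1}_b$ is not in $L^1(G)$. We replace it by $g=b\cdot\psi_U$, where $\psi_U$ comes from Proposition \ref{prop:approx_id} with $U$ relatively compact. Then $g\ge 0$ and $\lVert g\rVert=1$. The approximation is quantitative. For $f=\mathbf{1}_K/\mu(K)$, a direct computation gives $g*f(x)=\int\psi_U(u)f(u^{-1}b^{-1}x)\,du$. Hence
\begin{equation*}
\lVert g*f-b\cdot f\rVert\le\sup_{u\in U}\mu(uK\triangle K)/\mu(K).
\end{equation*}
This is small whenever $K$ is F\o lner for the compact set $\overline U$. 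So I fix $U$ once and for all as a compact symmetric neighborhood of $e$, and take $\psi=\psi_U$.

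The construction is inductive. At stage $n$ we have $b_0,\dots,b_{n-1}$. Choose a compact F\o lner set $Y_n$ with
\begin{equation*}
\mu(cY_n\triangle Y_n)/\mu(Y_n)<1/(n+1)\quad\text{for all } c\in \overline U\cup\bigcup_{m<n} b_m\overline U.
\end{equation*}
This set is compact, so such $Y_n$ exists. Then choose $b_n$ so that $b_n\overline U L_n\cap L_n=\emptyset$, where $L_n=\bigcup_{m\le n}\overline U Y_m$ is compact. Such $b_n$ exists by non-compactness applied to the compact set $\overline U L_n\cup L_n$. Set $f_n=\mathbf{1}_{Y_n}/\mu(Y_n)$ and $g_m=b_m\cdot\psi$.

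Now compute both iterated limits. If $m<n$, then $g_m*f_n=\int\psi(u)\,(b_mu)\cdot f_n\,du$. Each $(b_mu)\cdot f_n$ is within $1/(n+1)$ of $f_n$, since $b_mu\in b_m\overline U$. Hence $\varphi(g_m,f_n)\le 1/(n+1)$, and $\lim_m\lim_n\varphi(g_m,f_n)=0$. If $m\ge n$, then $g_m*f_n$ is supported in $b_m\overline U Y_n$, which is disjoint from $Y_n$ by the choice of $b_m$. Since $g_m*f_n$ and $f_n$ are positive of norm one, $\varphi(g_m,f_n)=2$. So $\lim_n\lim_m\varphi(g_m,f_n)=2$, and $\varphi$ witnesses instability.

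The remaining points are routine: measurability and norm-one computations, and the existence of compact F\o lner sets uniform over compact sets of translations. The latter is the standard F\o lner condition for amenable locally compact groups, which I would cite, for example Greenleaf or Paterson.
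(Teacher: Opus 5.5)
Your proof is correct, but it takes a different route from the paper. The paper passes to an ultrapower $\prod_D(L^1(G),*)$ along an ultrafilter on the neighborhood base $\mathcal{O}$ that contains every set $\{X\in\mathcal{O}: X\subseteq U\}$. It sets $\Psi_c=[\psi_U\cdot c]_D$. By Fact \ref{fact:compute} and Proposition \ref{prop:approx_id}, $\Psi_c*\Delta(h)=\Delta(c\cdot h)$ holds \emph{exactly}. The computation of $\varphi(g_n,f_m)$ then becomes verbatim the discrete one from Proposition \ref{prop:unstable}, and the F\o lner sets only need to be almost invariant under the finitely many points $b_0,\dots,b_n$.

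You instead keep one fixed bump $\psi$ and use genuine elements $g_m=b_m\cdot\psi\in L^1(G)$. You write $g_m*f_n=\int\psi(u)\,(b_mu)\cdot f_n\,du$ as an average of translates. The price is that $f_n$ must be almost invariant under the whole compact set $b_m\overline U$. This costs nothing, because the F\o lner characterization the paper cites (Pier) is already uniform over compact sets. The disjointness step needs the fattened set $b_m\overline U Y_n$, and your choice of $L_n$ handles this, using $e\in\overline U$.

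What your route buys:
\begin{itemize}
\item The witnesses lie in $L^1(G)$ itself. So you show that $d(x*y,y)$ fails the double-limit criterion already in the standard model. This is the non-WAP strengthening the paper remarks on after Propositions \ref{prop:unstable} and \ref{notstab-compact}.
\item You avoid the approximate-identity and ultrapower machinery entirely.
\end{itemize}

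What the paper's route buys is a reusable exact-translation device $\Psi_c$, which it exploits again in Theorem \ref{thm:ContIP}.

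Two minor points: your preliminary bound $\lVert g*f-b\cdot f\rVert\le\sup_{u\in U}\mu(uK\triangle K)/\mu(K)$ is never used in the final computation, and neither is the inclusion of $\overline U$ itself in the compact set over which $Y_n$ is F\o lner.
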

\begin{proof} The proof is similar to the proof of the discrete case (Proposition \ref{prop:unstable}). We recall that a locally compact group is amenable if and only if for every compact subset $K$ of $G$ and $\epsilon > 0$, there exists a compact subset $E$ of $G$ such that $0 < \mu(E) < \infty$ and for every $a \in K$, 
\begin{equation*}
    \frac{\mu(a E \triangle E)}{\mu(E)} < \epsilon,
\end{equation*}
(See \cite[Theorem 7.3 and Proposition 7.11]{Pier1984}). Since $G$ is not compact, for every compact subsets $K$ of $G$, there exists an element $a_{K} \in G$ such that $a_{K} \cdot K \cap K = \emptyset$. Otherwise, we have that for every $c \in G$, $c \cdot K \cap K \neq \emptyset$. Thus there exists $k_1,k_2 \in K$ such that $ck_1 = k_2$ and so $c = k_2 \cdot k^{-1}$. We conclude that $G = K \cdot K^{-1}$. Since group multiplication and inversion are continuous, we conclude that $G$ is compact, a contradiction. 

Let $(\psi_{U})_{U \in \mathcal{O}}$ be an approximate identity. Let $D$ be an ultrafilter on $\mathcal{O}$ such that for every $U \in \mathcal{O}$, $\{X \in \mathcal{O} : X \subseteq U\} \in D$. Consider the ultrapower $\prod_D(L^1(G),*)$. For every $c \in G$, we consider the element $\Psi_{c} = [\psi_{U}(x \cdot c^{-1})]_{D} =[\psi_U\cdot c]_D$. The element $\Psi_{c}$ allows one to translate by $c$ on diagonal elements in the ultrapower. We show that the formula $d(x * y, y)$ is not stable with respect to 
$\operatorname{Th}(L^1(G), *)$. To do so, we construct parameters in the 
ultrapower $\prod_{D} (L^1(G), *)$ that witness the failure of 
Grothendieck's double limit criterion. To construct such parameters, we first build a sequence of compact subsets $(K_n)_{n < \omega}$ and points $(b_n)_{n < \omega}$ in $G$. 

\textbf{Step 0}: Let $K_0$ be a compact set. Let $b_0 = a_{K_0}$. 

\textbf{Step n+1:} Suppose we have constructed $K_i$ and $b_i$ for $0\leq i\leq n$. By amenability, there exists a compact subset $E$ of $G$ such that $0 < \mu(E) < \infty$ and for every $c \in \{b_0,...,b_{n}\}$, 
\begin{equation*}
    \frac{\mu(c E \triangle E )}{\mu(E)} < \frac{1}{n+1}. 
\end{equation*}
We set $K_{n+1} = E$. Now, since any finite union of compact sets remains compact, we may choose $b_{n+1}$ such that 
\begin{equation*}
    \left(b_{n+1} \cdot \bigcup_{m \leq n+1} K_m \right) \cap \bigcup_{m \leq n+1} K_m = \emptyset. 
\end{equation*} 

Thus we have constructed the sequences $(K_n)_{n < \omega}$ and $(b_{n})_{n < \omega}$. We now use these sequences to construct elements in $\prod_{D}(L^{1}(G),*)$. Let $f_n = \Delta\left(\frac{\mathbf{1}_{K_n}}{\mu(K_n)} \right)$ and $g_n = \Psi_{b_n}$, where $\Delta\colon(L^{1}(G),*) \to \prod_{D} (L^{1}(G),*)$ is the standard diagonal embedding. 

Then, by Fact \ref{fact:compute} and Proposition \ref{prop:approx_id} we have
\begin{align*}
    g_n*f_m=\Psi_{b_n}*\Delta\left(\frac{\mathbf{1}_{K_n}}{\mu(K_n)}  \right)&=[(\psi_U\cdot b_{n})*\frac{\mathbf{1}_{K_n}}{\mu(K_n)}]_D=[\psi_U*(b_{n}\cdot \frac{\mathbf{1}_{K_n}}{\mu(K_n)})]_D\\
    &=[b_{n}\cdot\frac{\mathbf{1}_{K_n}}{\mu(K_n)}]_D=\Delta\left(b_{n}\cdot \frac{\mathbf{1}_{K_n}}{\mu(K_n)}\right)=\Delta\left(\frac{\mathbf{1}_{b_{n} K_n}}{\mu(K_n)}\right).
\end{align*}
Let $\varphi(x,y) := d(x * y, y)$, then 
\begin{equation*}
    \lim_{n \to \infty} \lim_{m \to \infty} \varphi(g_n , f_m) = 0 \text{ while } \lim_{m \to \infty} \lim_{n \to \infty} \varphi(g_n,f_m) = 2. 
\end{equation*}
Thus $\varphi(x,y)$ violates Grothendieck's double limit criterion and so $\Th(L^{1}(G),*)$ is unstable. 
\end{proof}

While the previous proposition is a direct generalization of Proposition \ref{prop:unstable}, we now show that $(L^{1}(G),*)$ is unstable whenever $G$ is non-discrete. We will use the following fact. 

\begin{fact}\label{fact:nondiscrete} Suppose that $G$ is a non-discrete locally compact group with fixed Haar measure $\mu$. Then for every $\epsilon > 0$ there exists some symmetric open neighborhood of the identity $U$ such that $0 < \mu(U) < \epsilon$. 
\end{fact}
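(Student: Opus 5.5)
The plan is to combine three standard properties of Haar measure: it is positive on nonempty open sets, finite on compact sets, and outer regular. The main step is to show that non-discreteness forces $\mu(\{e\})=0$; outer regularity then supplies small open neighbourhoods of $e$, which we symmetrize.

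\emph{Step 1: $\mu(\{e\})=0$.} Since $G$ is locally compact, fix a compact neighbourhood $V$ of $e$, so $\mu(V)<\infty$. Suppose $V$ were finite. Then $\operatorname{int}V$ is a finite open set containing $e$. Since $G$ is Hausdorff, removing the finitely many other points of $\operatorname{int}V$ (finite sets are closed) leaves $\{e\}$ open. That would make $G$ discrete, a contradiction, so $V$ is infinite. If $\mu(\{e\})=c>0$, then left invariance gives $\mu(\{g\})=\mu(g\cdot\{e\})=c$ for every $g\in G$. Hence $\mu(V)\ge c\cdot|V|=\infty$, contradicting compactness of $V$. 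Therefore $\mu(\{e\})=0$. I expect this to be the only genuinely non-formal step.

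\emph{Step 2: small open neighbourhood.} Haar measure is a Radon measure, so it is outer regular on Borel sets, and in particular on the compact set $\{e\}$. Given $\epsilon>0$, this yields an open set $W\ni e$ with $\mu(W)<\epsilon$.

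\emph{Step 3: symmetrize.} Put $U=W\cap W^{-1}$. Inversion is a homeomorphism, so $W^{-1}$ is open and contains $e$; thus $U$ is an open neighbourhood of $e$, and it satisfies $U=U^{-1}$. By monotonicity, $\mu(U)\le\mu(W)<\epsilon$. Finally, $U$ is a nonempty open set and Haar measure charges every nonempty open set, so $\mu(U)>0$. This gives $0<\mu(U)<\epsilon$, as required.
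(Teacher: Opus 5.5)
Your proof is correct and follows the same route as the paper: show $\mu(\{e\})=0$, use outer regularity of Haar measure to get an open neighbourhood of $e$ with small measure (positive because Haar measure charges nonempty open sets), and symmetrize by intersecting with its inverse. The only difference is that you prove $\mu(\{e\})=0$ yourself: a compact neighbourhood of $e$ must be infinite, so by translation invariance it would have infinite measure if $\mu(\{e\})>0$. The paper instead cites Deitmar--Echterhoff for this step.
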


\begin{proof} This is classical. Since $G$ is not discrete, $\mu(\{e\}) = 0$ (e.g., \cite[Proposition 1.4.4]{DeitmarEchterhoff2014}). Since $\mu$ is a Haar measure, it is outer regular, and thus $\mu(\{e\}) = \inf\{\mu(U): U$ is an open neighborhood of $e\}$ (e.g., \cite[Theorem 2.10]{folland2016harmonic}). Since every non-empty open set have positive Haar measure, we may find $U$ such that $0 < \mu(U) < \epsilon$. To ensure that $U$ is symmetric, replace $U$ with $U \cap U^{-1}$ if necessary. 
\end{proof}

\begin{proposition}\label{notstab-compact} Suppose that $G$ is a locally compact, non-discrete group with fixed Haar measure $\mu$. Then the formula $\varphi(x,y)=\lVert x-x*y\lVert=d(x,x*y)$
is unstable with respect to $\Th(L^1(G),*)$.
\end{proposition}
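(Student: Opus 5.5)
We will build parameters in $L^{1}(G)$ itself whose double limits for $\varphi(x,y)=d(x,x*y)$ disagree, using a shrinking family of normalized indicators of small symmetric neighbourhoods of the identity. The goal is to exploit that, with $y$ running along an approximate identity, $x*y\approx x$, but not uniformly in $x$.

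First we fix the parameters. By Fact \ref{fact:nondiscrete}, and since the Haar measure is outer regular at $e$, we choose a decreasing sequence of symmetric open neighbourhoods of the identity $U_0\supseteq U_1\supseteq\cdots$ with $0<\mu(U_{n+1})<\mu(U_n)/n^{2}$ (say) and $\mu(U_{n+1}U_{n+1})$ controlled. We also arrange, using local compactness, that each $\overline{U_n}$ is compact and $U_{n+1}U_{n+1}\subseteq U_n$. Set
\[
f_n=\frac{\mathbf{1}_{U_n}}{\mu(U_n)}, \qquad g_m=\frac{\mathbf{1}_{U_m}}{\mu(U_m)}.
\]
These are positive unit vectors, and after shrinking to compact closures they can be regarded as an approximate identity in the sense of Proposition \ref{prop:approx_id}. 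We compute the two iterated limits of $\varphi(f_n,g_m)=\lVert f_n-f_n*g_m\rVert$.

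For fixed $n$, let $m\to\infty$. Since the $U_m$ eventually lie inside every neighbourhood of $e$, Proposition \ref{prop:approx_id} (continuity of right translation in $L^{1}$) gives $\lVert f_n*g_m-f_n\rVert\to 0$. Hence $\lim_n\lim_m\varphi(f_n,g_m)=0$.

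For fixed $m$, let $n\to\infty$. Here $f_n*g_m(x)=\frac{1}{\mu(U_n)\mu(U_m)}\int_{U_n}\mathbf{1}_{U_m}(y^{-1}x)\,dy$, which is supported in $U_nU_m\subseteq U_{m-1}$ and bounded by $1/\mu(U_m)$. On the other hand, $f_n$ has height $1/\mu(U_n)\to\infty$ on the tiny set $U_n$. So
\[
\lVert f_n-f_n*g_m\rVert\;\ge\;\int_{U_n}\Bigl(\tfrac{1}{\mu(U_n)}-\tfrac{1}{\mu(U_m)}\Bigr)d\mu+\int_{G\setminus U_n}f_n*g_m\,d\mu,
\]
and the first term is $1-\mu(U_n)/\mu(U_m)$. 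The second term is $1-\int_{U_n}f_n*g_m\ge 1-\mu(U_n)/\mu(U_m)$, since $\lVert f_n*g_m\rVert=1$. Hence $\varphi(f_n,g_m)\ge 2-2\mu(U_n)/\mu(U_m)\to 2$ as $n\to\infty$. This gives $\lim_m\lim_n\varphi(f_n,g_m)=2$ (the upper bound $2$ is trivial), so the array converges with distinct iterated limits. Instability then follows by the definition stated earlier; if we want both limits to exist as genuine limits of the rows, we may pass to subsequences.

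The main obstacle is justifying the convergence $f_n*g_m\to f_n$ in the first iterated limit. This needs the $U_m$ to form, eventually, a neighbourhood base's worth of smallness, and Proposition \ref{prop:approx_id} is phrased for nets over a full base. If $G$ is not first countable we cannot take a sequence forming a base. Instead we argue directly via continuity of $a\mapsto f\cdot a$ in $L^{1}(G)$: given $n$ and $\epsilon$, there is a neighbourhood $W$ with $\lVert f_n\cdot a-f_n\rVert<\epsilon$ for $a\in W$. During the construction we choose $U_{n+1}\subseteq W_0\cap\cdots\cap W_n$, using the neighbourhoods with $\epsilon=1/(n+1)$ for each earlier $f_k$. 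Writing $f_n*g_m$ as an average of right translates of $f_n$ over $U_m$ (using that $U_m$ is symmetric and taking modular function issues into account, or simply choosing $U_m$ inside the region where the modular function is near $1$) then yields the bound.
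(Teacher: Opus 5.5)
Your proof is correct, and it uses the same parameters as the paper: normalized indicators $\mathbf{1}_{U}/\mu(U)$ of a shrinking sequence of symmetric neighbourhoods of $e$, used as both $x$ and $y$. In both arguments $\lim_n\lim_m\varphi=0$ comes from an approximate-identity condition imposed during an inductive construction, which is needed because $G$ need not be first countable.

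The genuine difference is in the other iterated limit. The paper builds the left-sided condition $\lVert\theta_{V_{n}}*\theta_{V_m}-\theta_{V_m}\rVert<\frac1n$ (for $m<n$) into the construction. This gives $\varphi(a_n,b_m)\approx_{1/n}\lVert\theta_{V_n}-\theta_{V_m}\rVert$, which it then evaluates as $2(1-\mu(V_n)/\mu(V_m))$ using $V_n\subseteq V_m$.

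You instead bound $\varphi(f_n,g_m)$ from below directly, using only $\lVert f_n*g_m\rVert_\infty\le 1/\mu(U_m)$ and $\int f_n*g_m\,d\mu=1$. This yields $\varphi(f_n,g_m)\ge 2-2\mu(U_n)/\mu(U_m)$. Your estimate needs nothing beyond $\mu(U_n)\to 0$: no nesting, no left-sided approximation, and it holds verbatim in non-unimodular groups.

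A side benefit is that the only approximation property you impose is the right-sided one, $f_k*g_m\to f_k$, which is exactly what the first limit uses. You justify it correctly via the isometric right action $w\mapsto\Delta(w^{-1})(f\cdot w)$. By contrast, the paper's inductive condition is phrased on the left, $\lVert\theta_{V_{n+1}}*f-f\rVert<\frac{1}{n+1}$. Its first limit therefore implicitly relies on also imposing the right-sided half of Proposition~\ref{prop:approx_id}.

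The extra requirements in your setup play no role and can be dropped. These are $U_{n+1}U_{n+1}\subseteq U_n$, compact closures, control of $\mu(U_{n+1}U_{n+1})$, and the support claim $U_nU_m\subseteq U_{m-1}$.
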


\begin{proof}
The key ingredient in this proof is the existence of a non-trivial bounded approximate identity. Without loss of generality, let $\mathcal{O}$ be the family of symmetric open neighborhoods of the identity (with finite Haar measure), consider the bounded approximate identity given by $(\frac{1}{\mu(V)}\mathbf{1}_{V})_{V \in \mathcal{O}}$. For notational convenience, we write $\theta_{V} \coloneqq \frac{1}{\mu(V)}\mathbf{1}_{V}$. We selectively construct inductively a sequence of neighborhoods $\{V_n\}_{n < \omega}$ around the identity. First, we let $V_0$ be any neighborhood of the identity such that $ 0 < \mu(V_0) < \infty$. Assuming that we are given  $\{V_i\}_{i\leq n}$, we define a symmetric open set $V_{n+1}$ such that $V_{n+1} \subseteq \bigcap_{i \leq n} V_i$, $\mu(V_{n+1}) < \mu(V_{n})/2$ and $\lVert \theta_{V_{n+1}}*f-f\lVert<\frac{1}{n+1}$ for $f=\theta_{V_i}$ with $i\leq n$. Such a sequence of open sets exists since $(\theta_{V})_{V \in \mathcal{O}}$ is a b.a.i. (see Proposition \ref{prop:approx_id} and Fact \ref{fact:nondiscrete}). 

Let $f_n=\theta_{V_n}$ and note that $\lVert f_n\lVert=1$. Now let $a_n=f_n$, $b_m=f_m$. Then by the definition of the sequence $\{V_n\}_{n < \omega}$
 \begin{equation*}
     \lim_{m\to \infty}\lVert a_n - a_n*b_m\lVert=0,
 \end{equation*}
and thus 
\begin{equation*}
    \lim_{n \to \infty} \lim_{m\to \infty}\lVert a_n-a_n*b_m\lVert=0. 
\end{equation*}
On the other hand, notice that if we fix $m$ and consider $n > m$,
\begin{equation*}
    \lVert a_n - a_n * b_m \rVert \approx_{\frac{1}{n}} \lVert a_n - b_m \rVert = 2 \left(1 - \frac{\mu(V_n)}{\mu(V_m)} \right) \geq  2 \left(1 - \frac{1}{2^{n-m}} \right). 
\end{equation*}

Hence, for any fixed $m$, 
\begin{equation*}
    \lim_{n\to \infty} \lVert a_n - a_n * b_m \rVert = 2, 
\end{equation*}
and so, 
\begin{equation*}
    \lim_{m \to \infty} \lim_{n\to \infty} \lVert a_n - a_n * b_m \rVert = 2. 
\end{equation*}

Thus $\varphi(x,y)$ violates Grothendieck's double limit criterion and so $\Th(L^{1}(G),*)$ is unstable. 
\end{proof}

Let us point out some connections between this result and abstract harmonic analysis. We actually proved a stronger statement, the function $d(x*y,y)$ is not WAP, that is, there are sequences in $\{a_n\}_n$, $\{b_m\}_m$ in the unit ball of $(L^1(G),*)$ (without the need of going to an elementary extension) such that 
\begin{equation*}
    2=\lim_{n \to \infty} \lim_{m \to \infty}  d(a_n*b_m,b_m)\neq \lim_{m \to \infty} \lim_{n \to \infty} d(a_n*b_m,b_m)=0. 
\end{equation*}

\begin{theorem}\label{thm:ContIP} Let $G$ be a locally compact group with fixed Haar measure $\mu$ and suppose that for every positive integer $n$, there exists an element of order greater than $n$. Then $d(x * y, z)\dotdiv\frac{1}{2}$ has $\mathrm{TP}_2$ and $d(x * y, y)$ has $\mathrm{IP}$ with respect to $\Th(L^{1}(G),*)$. 
\end{theorem}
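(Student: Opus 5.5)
The plan is to mimic the proof of Theorem \ref{prop:general} in the case $G=\mathbb{Z}$ (Corollary \ref{cor:tp2}(1)), replacing the atoms $\mathbf{1}_{a}$ by translated approximate identities $\Psi_a=[\psi_U\cdot a]_D$ as in the proof of Proposition \ref{prop:cont_amen}. By compactness it suffices to show that for each $k$ there are parameters in an ultrapower $\prod_D(L^1(G),*)$ giving a $k\times k$ array in which paths are consistent and rows are $2$-inconsistent (and, for $\mathrm{IP}$, a shattered set of size $k$).

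Fix $k$ and choose primes $p_0,\dots,p_k>k$, together with $N=\prod_i p_i$. Pick $a\in G$ of order greater than $N^{2}$ (or of infinite order). By Fact \ref{fact:translate} there is a symmetric open neighborhood $V$ of $e$ with compact closure, and positive finite measure, such that the translates $a^{t}V$ for $0\le t< N^{2}$ are pairwise disjoint. Shrinking $V$, I will in fact want $a^{s}V\cap a^{t}V=\emptyset$ for all distinct $s,t$ in a range large enough to contain every word we use. This finite piece of $\langle a\rangle$ plays the role of the integers.

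Mimicking $\mathbb{Z}/p_i\mathbb{Z}$-means, let
\[
f_i=\Delta\Bigl(\tfrac{1}{M}\textstyle\sum_{t<M}a^{p_i t}\cdot\theta_V\Bigr),
\]
where $\theta_V=\mathbf{1}_V/\mu(V)$ and $M$ is large. Averaging over an arithmetic progression of step $p_i$ approximates a mean on the coset structure mod $p_i$, much like F\o lner sets in $p_i\mathbb{Z}$. Set $g_{i,j}=\Psi_{a^{j}}*f_i$, which behaves like $\Delta(a^{j}\cdot f_i)$. Rows are handled as follows: for $j\ne j'\le k<p_i$, the progressions $j+p_i\mathbb{Z}$ and $j'+p_i\mathbb{Z}$ are disjoint, so by disjointness of the translates $d(g_{i,j},g_{i,j'})=2$, and the triangle inequality argument of Lemma \ref{lemma:TP2} gives $2$-inconsistency. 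For paths $\eta$, the Chinese remainder theorem gives $c<N$ with $c\equiv \eta(i)\pmod{p_i}$ for all $i$. Taking $x=\Psi_{a^{c}}$, Fact \ref{fact:compute} and Proposition \ref{prop:approx_id} yield $x*f_i=\Delta(a^{c}\cdot f_i)$, which differs from $g_{i,\eta(i)}$ by a shift of the progression by $(c-\eta(i))/p_i<N$ steps. This contributes error at most $2N/M$, which is made small by taking $M$ large; $M$ can be sent to infinity along the ultrafilter, so that the error is exactly $0$ in the ultrapower. For $\mathrm{IP}$, set $b_{i,0}=e$ and shatter $\{f_i\}$ using $c_A$ with $c_A\equiv0$ or $1$ modulo $p_i$ according to whether $i\in A$. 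Then $d(\Psi_{a^{c_A}}*f_i,f_i)$ is $0$ or $2$.

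The main obstacle is ensuring disjointness of all the translates $a^{t}V$ that appear, for $t$ up to roughly $(M+N)\max p_i$, when $M$ must grow. If $a$ has infinite order or its order exceeds this bound, Fact \ref{fact:translate} applied to this finite set gives $V$, with $V$ shrinking as $M$ grows, so I will index the ultrapower by pairs $(M,V)$. The hypothesis only gives, for each $n$, some element of order greater than $n$, so for each $M$ I pick a new element $a_M$ of sufficiently large order and a neighborhood $V_M$ adapted to it. I expect the exact-zero path computation to need care, because the $\Psi$ factor and the $f_i$ must use compatible indices in the ultrapower; I will therefore define both coordinatewise using $(a_M,V_M)$ rather than via a diagonal embedding.
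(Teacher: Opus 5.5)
Your proposal matches the paper's proof. For each $m$ the paper picks $a_m$ of order $>2m$ and $V_m$ from Fact~\ref{fact:translate}, lets $f_{i,m}$ be the normalized sum of $\mathbf{1}_{a_m^rV_m}$ over $r\in p_i\mathbb{Z}\cap[0,m]$ and $g_{i,j,m}$ its shift by $a_m^j$, and then uses the Chinese remainder theorem for paths (error at most $2b/|m_i|\to 0$), disjoint supports for $2$-inconsistency of rows, and the same shattering for $\mathrm{IP}$, with translation realized by $\Psi_a$. The index compatibility you flag at the end is handled in the paper by an iterated ultrapower $\prod_E\bigl(\prod_D L^1(G)\bigr)$ (inner ultrafilter on neighborhoods of $e$, outer one on $m$, with $f_{i,m}$ embedded diagonally in the inner factor); this is equivalent to your coordinatewise definition provided that, for each $M$, the approximate-identity index is chosen fine enough relative to $V_M$.
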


\begin{proof}  Fix $k \in \mathbb{N}$. Let $(\psi_{U})_{U \in \mathcal{O}}$ be an approximate identity for $L^{1}(G)$. Let $D$ be an ultrafilter on $\mathcal{O}$ such that for every $U \in \mathcal{O}$, the set $\{ X \in \mathcal{O}: X \subseteq U\} \in D$. Consider the ultrapower $\prod_D(L^1(G),*)$. Let $\Psi = [\psi_{U}]_{D}$. For every $a \in G$, we let $\Psi_{a} = [\psi_{U}( x \cdot a^{-1})]_{D}$. The element $\Psi_{a}$ allows one to translate by $a$ on diagonal elements in the ultrapower. More formally, if $\Delta\colon (L^{1}(G),*) \to \prod_{D} (L^{1}(G),*)$ is the standard diagonal embedding, then for every $a \in G$ and $f \in L^{1}(G)$, 
$$\Psi_{a} * \Delta(f) = [\psi_{U}(x \cdot a^{-1}) * f]_{D} =[\psi_{U} * (a \cdot f)]_{D} =  \Delta(a \cdot f).$$

The second equality follows from Fact \ref{fact:compute}. We now justify the third equality. Indeed, the following equivalence follows directly from the definition of a metric ultrapower:
\begin{equation*}
    [\psi_{U} * (a \cdot f)]_D = \Delta(a \cdot f) \Longleftrightarrow \forall \epsilon > 0, \{X \in \mathcal{O} : \lVert \psi_{X} * (a \cdot f) - a \cdot f \rVert_{L^{1}(G)} \leq \epsilon \} \in D.  
\end{equation*}
We argue that the right-hand-side of the equivalence is satisfied. Since $(\psi_{U})_{U \in \mathcal{O}}$ is an approximate identity, it follows from Proposition \ref{prop:approx_id} that for every $\epsilon > 0$, there exists some $U_{\epsilon} \in \mathcal{O}$ such that for any $X \in \mathcal{O}$ with $X \subseteq U_{\epsilon}$, $\lVert \psi_{X} *( a \cdot f) - a \cdot f \rVert_{L^{1}(G)} \leq \epsilon$. By construction of our ultrafilter $D$, for every $\epsilon > 0$, we have that $\{X \in \mathcal{O}: X \subseteq U_{\epsilon}\} \in D$. Since ultrafilters are closed under supersets, 
\begin{equation*}
    \{X \in \mathcal{O}: X \subseteq U_{\epsilon}\} \subseteq \{X \in \mathcal{O}: \lVert \psi_{X} * (a \cdot f) - a \cdot f \rVert_{L^{1}(G)} \leq \epsilon\} \in D. 
\end{equation*}
Thus, the right-hand-side of the equivalence is satisfied, justifying the third equality.

Choose $k$ distinct primes $p_1,...,p_k$ which are all larger than $k$. Let $n = \prod_{i = 1}^{k} p_k$. Let $m \gg n$. For each $i \leq k$, we let $m_i$ denote the set $p_i\mathbb{Z} \cap [0,m]$. Choose $a_{m} \in G$ with order larger than $2m$. Then the elements $\{a_{m}, a_{m}^{2},..., a_{m}^{2m}\}$ are all distinct. By Fact \ref{fact:translate}, there exists some open neighborhood of the identity $V_{m}$ such that $a_{m}^{w_1} \cdot V_{m} \cap a_{m}^{w_2} \cdot V_{m} = \emptyset$ for $w_1,w_2 \leq 2m$. Since $V$ is open, we have that $\mu(V) > 0$ and since $\mu$ is the Haar measure, it is invariant under translations, and so $\mu(h V) > 0$ for every $h \in G$.

Then, for every $m < \omega$ and $i \leq k$, we define,
\begin{equation*}
    f_{i,m}(x) = \frac{1}{|m_i| \cdot  \mu(V_{m})} \sum_{\substack{r \in [0,m] \\ r \in p_i\mathbb{Z}}} \mathbf{1}_{a_{m}^{r}V_{m}}(x),
\end{equation*}  
and the translates (for $j \leq k$); 
\begin{equation*}
        g_{i,j,m}(x) = \frac{1}{|m_i| \cdot  \mu(V_{m})} \sum_{\substack{r \in [0,m] \\ r \in p_i\mathbb{Z}}} \mathbf{1}_{a_{m}^{r + j}V_{m}}(x),
\end{equation*}
Note that for $j_1<j_2\leq k$ the functions  $g_{i,j_1,m}(x)$ and $g_{i,j_2,m}(x)$ have disjoint support and thus $d(g_{i,j_1,m},g_{i,j_2,m})=2$.

We identify $f_{i,m}$ and $g_{i,j,m}$ with their images under the diagonal embedding in $\prod_{D} L^{1}(G)$. Let $E$ be a non-principal ultrafilter on $\mathbb{N}$. Taking an ultrapower again, we obtain a new structure $N = \prod_{E} (\prod_{D} L^{1}(G))$ and norm one elements $f_{i} = [f_{i,m}]_{E}$ and  $g_{i,j} = [g_{i,j,m}]_{E}$, again identifying the functions with their diagonal embedding. We first argue that paths are consistent. Fix $\eta\colon [k] \to [k]$. By the Chinese remainder theorem, there exists some integer $t_{\eta} \leq n$ (recall that $n = \prod_{i = 1}^{k} p_k$) such that $t_{\eta} \equiv \eta(i) \mod p_{i}$ for each $i \leq k$. We claim that for each $i \leq k$, $d(\Psi_{t_{\eta}} * f_{i}, g_{i,\eta(i)}) = 0$, where $\Psi_{t_{\eta}}$ is a short form of $[\Psi_{a_m^{t_\eta}}]_{E}$. Indeed, since $\eta(i) < k < p_i$, we have that $t_{\eta} = bp_i + \eta(i)$ for some non-negative integer $b \leq n$. Thus, 
\begin{align*}
    &d(\Psi_{t_\eta} * f_i, g_{i,\eta(i)}) = \lim_{m \to \infty} d(\Psi_{t_{\eta}} * f_{i,m}, g_{i,\eta(i),m}) \\
    &= \lim_{m \to \infty} \frac{1}{|m_i| \cdot \mu(V_m)} \int_{G} \left| \sum_{\substack{r \in [0,m] \\ r \in p_i \mathbb{Z}}} \mathbf{1}_{a_m^{r + t_{\eta} } V_{m}}(x) - \sum_{\substack{r \in [0,m] \\ r \in p_i \mathbb{Z}}}\mathbf{1}_{a_{m}^{r + \eta(i)} V_{m}}(x)   \right| d\mu\\ 
    &= \lim_{m \to \infty} \frac{1}{|m_i| \cdot \mu(V_m)} \int_{G} \sum_{\substack{r \in [0,bp_i] \cup [m,m + bp_i] \\ r \in p_i \mathbb{Z}}}\mathbf{1}_{a_{m}^{r+\eta(i)} V_{m}}(x) d\mu \\
    &\leq \lim_{m \to \infty} \frac{2b}{|m_i|} = 0. 
\end{align*}
For the final equality, recall that $m \gg n$ and thus $|m_i| \gg n\geq b$.

Now we argue that the rows are $2$-inconsistent. Notice that $d(g_{i,j},g_{i,j'}) = 2$ whenever $j \neq j'$. Hence, if there exists some $h$ such that 
\begin{equation*}
    d(h * f_{i},g_{i,j}) \dotdiv \frac{1}{2}=0 \text{ and } d(h * f_{i},g_{i,j'}) \dotdiv \frac{1}{2}=0,
\end{equation*}
then by the triangle inequality, we have that $d(g_{i,j},g_{i,j'}) \leq 1$, a contradiction. 

By an argument similar to the proof of Theorem \ref{prop:general}, one can show that for every $n\geq 1$, the formula $d(x * y, y)$ shatters $\{f_{i}: i \leq n\}$ and thus it has $\mathrm{IP}$. 
\end{proof}

\section{Embedding abelian Lie groups}

In this section, we prove that certain continuous convolution structures can be approximated by discrete ones. At the same time, the results of this section show that the metric ultraproducts of discrete convolution algebras are complicated enough to encode their continuous counterparts. 

Fix a connected abelian Lie group $G$ with fixed Haar measure $\mu$ and bi-invariant metric $d$. We will show that there exists a sequence of finite abelian groups $(H_i)_{i < \omega}$ such that $(L^{1}(G),*)$ metrically embeds into $\prod_{D} (\ell^{1}(H_i),*)$. Moreover, we can take this sequence of finite groups to be of the form $(\mathbb{Z}/n_i\mathbb{Z})_{i < \omega}$. Our first fact is just the standard classification of connected abelian Lie groups (see e.g., \cite[Section 4.2, Page 87]{PC}). 

\begin{fact}\label{fact:classification} Suppose that $G$ is a connected abelian Lie group. Then $G \cong \mathbb{R}^{m} \times \mathbb{T}^{k}$ for some $m,k \geq 0$, where $\mathbb{T}^{k} = (S^{1})^{k}$ is the $k$-dimensional torus. 
\end{fact}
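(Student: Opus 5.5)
The plan is to pass through the exponential map and reduce the statement to the classification of discrete subgroups of a real vector space. Let $\mathfrak{g}$ be the Lie algebra of $G$, of dimension $n$. Since $G$ is abelian, its Lie bracket vanishes, so $\mathfrak{g} \cong \mathbb{R}^{n}$ as an abelian Lie algebra.

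The first step is to show that $\exp\colon \mathfrak{g} \to G$ is a group homomorphism from $(\mathbb{R}^n,+)$. For commuting $X,Y$ the one-parameter subgroups commute, so $t \mapsto \exp(tX)\exp(tY)$ is itself a one-parameter subgroup with derivative $X+Y$ at $0$. Uniqueness of one-parameter subgroups then gives $\exp(X+Y) = \exp(X)\exp(Y)$.

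The second step is to show that $\exp$ is surjective. It is a local diffeomorphism at $0$, so its image contains an open neighborhood of $e$. Because $\exp$ is a homomorphism, the image is a subgroup, and a subgroup containing an open neighborhood of the identity is open, hence also closed. Since $G$ is connected, the image is all of $G$.

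The third step is to analyse the kernel $\Lambda = \ker(\exp)$. Since $\exp$ is injective on a neighborhood of $0$, $\Lambda$ is a discrete subgroup of $\mathbb{R}^n$. I would invoke, or prove by induction on the dimension of the span, the standard fact that a discrete subgroup of $\mathbb{R}^n$ is generated by $k$ linearly independent vectors $v_1,\dots,v_k$. The inductive step takes a shortest nonzero vector, passes to the quotient by its line, and uses discreteness to see that the image of $\Lambda$ in the quotient is again discrete. This is the one step with real content, and I expect it to be the main obstacle. Extending $v_1,\dots,v_k$ to a basis $v_1,\dots,v_n$ of $\mathbb{R}^n$ gives a linear isomorphism carrying $\Lambda$ to $\mathbb{Z}^k \times \{0\}$. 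Hence
\[
\mathbb{R}^n/\Lambda \cong \mathbb{R}^k/\mathbb{Z}^k \times \mathbb{R}^{n-k} \cong \mathbb{T}^k \times \mathbb{R}^{n-k}.
\]

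The final step is to conclude that $\exp$ descends to a continuous bijective homomorphism $\mathbb{R}^n/\Lambda \to G$ and that this map is an isomorphism of Lie groups. The induced map is a local diffeomorphism, because $\exp$ is one near $0$ and both sides are homogeneous. A bijective local diffeomorphism is a diffeomorphism, so the induced map is an isomorphism of Lie groups. Setting $m = n-k$ gives $G \cong \mathbb{R}^m \times \mathbb{T}^k$. Alternatively, one could observe that a surjective continuous homomorphism between $\sigma$-compact locally compact groups is open, but the smooth argument suffices here.
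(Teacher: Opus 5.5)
Your proof is correct. The exponential map of a connected abelian Lie group is a surjective homomorphism $(\mathbb{R}^n,+)\to G$ and a local diffeomorphism. Its kernel is a discrete subgroup of $\mathbb{R}^n$, hence spanned by linearly independent vectors $v_1,\dots,v_k$, which gives $G\cong\mathbb{R}^n/\Lambda\cong\mathbb{T}^k\times\mathbb{R}^{n-k}$.

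The paper gives no proof of this fact; it only quotes it as the standard classification from a Lie theory text. Your argument is the standard textbook proof of that classification, so it supplies exactly what the paper leaves to the reference.
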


\begin{definition} For our purposes, a \textbf{lattice} $\Lambda$ is a discrete subgroup of $G$ such that $G/\Lambda$ is compact. A \textbf{fundamental domain} with respect to a lattice $\Lambda$ is a subset $F$ of $G$ such that 
\begin{enumerate}
    \item $\bigcup_{\lambda \in \Lambda} (\lambda F) = G$
    \item For any distinct $\lambda_1,\lambda_2 \in G$, the interior of $\lambda_1F \cap \lambda_2 F$ is empty. 
\end{enumerate}
\end{definition}

\begin{definition}\label{defn:admissible} Let $G$ be a connected abelian Lie group. We say that a sequence of lattices paired with fundamental domains $(\Lambda_n,F_n)_{n < \omega}$ is \textbf{admissible} (for $G$) if
\begin{enumerate}
    \item For each $n < \omega$, $\{\lambda F_n: \lambda \in \Lambda_n\}$ forms a disjoint partition of $G$. Thus, for each element $a \in G$, we let $\lambda_{n,a}$ be the unique element of $\Lambda_n$ such that $a \in \lambda_{n,a}F_n$.
    \item For each $n < \omega$, $\mu(F_{n} \triangle F_{n}^{-1}) = 0$.
    \item For every $\epsilon > 0$, there exists $n < \omega$ such that for any $m > n$ and any $a \in G$, $d(a,\lambda_{m,a}) < \epsilon$.
\end{enumerate}
\end{definition}

\begin{fact} If $G$ is a connected abelian Lie group, then there exists a sequence of lattices and fundamental domains such that $(\lambda_n,F_n)_{n < \omega}$ is admissible for $G$.
\end{fact}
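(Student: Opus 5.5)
We will construct the admissible sequence explicitly after transporting the problem to a standard model. By Fact \ref{fact:classification} there is an isomorphism of topological groups $G \cong \mathbb{R}^{m} \times \mathbb{T}^{k}$. We write $\mathbb{T} = \mathbb{R}/\mathbb{Z}$ additively and set $d_0 = m+k$. Under this isomorphism the Haar measure $\mu$ corresponds to a positive scalar multiple of Lebesgue measure (product with normalized Haar measure on the torus factors), so $\mu$-null sets are exactly Lebesgue-null sets. Being a lattice, being a fundamental domain, and conditions (1)--(2) of Definition \ref{defn:admissible} are preserved by this isomorphism. Condition (3) refers to the fixed bi-invariant metric $d$ on $G$, which we only use through the topology it induces, as explained below. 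It therefore suffices to work in $\mathbb{R}^{m}\times\mathbb{T}^{k}$.

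For each $n$ we take
\begin{equation*}
\Lambda_n = (2^{-n}\mathbb{Z})^{m} \times (2^{-n}\mathbb{Z}/\mathbb{Z})^{k}, \qquad F_n = [-2^{-n-1},2^{-n-1})^{d_0},
\end{equation*}
where the last $k$ coordinates of $F_n$ are read modulo $1$; for $n \geq 1$ the interval has length less than $1$, so this projection is injective. The subgroup $\Lambda_n$ is discrete, and the quotient $G/\Lambda_n \cong \mathbb{T}^{m} \times (\text{finite group})$ is compact, so $\Lambda_n$ is a lattice. Condition (1) is checked coordinatewise. Every real $x$ can be written uniquely as $x = \lambda + f$ with $\lambda \in 2^{-n}\mathbb{Z}$ and $f \in [-2^{-n-1},2^{-n-1})$, namely by rounding to the nearest point of $2^{-n}\mathbb{Z}$ with ties broken upward. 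The same holds modulo $1$ on the torus coordinates, since $2^{-n}\mathbb{Z} \supseteq \mathbb{Z}$. Hence the translates $\lambda + F_n$ for $\lambda \in \Lambda_n$ partition $G$, and in particular $F_n$ is a fundamental domain. For condition (2), note that $F_n^{-1} = -F_n = (-2^{-n-1},2^{-n-1}]^{d_0}$. The symmetric difference $F_n \triangle F_n^{-1}$ is contained in the union of finitely many faces of the closed cube, which is Lebesgue-null, hence $\mu$-null.

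The step needing care is condition (3), since $d$ is an arbitrary bi-invariant metric compatible with the topology. By invariance, $d(a,\lambda_{m,a}) = d(a - \lambda_{m,a}, 0)$, and $a - \lambda_{m,a} \in F_m$. It therefore suffices to show that for each $\epsilon>0$ there is $n$ with $F_m \subseteq B_d(0,\epsilon)$ for all $m>n$. The closed cubes $C_m = \overline{F_m}$ are compact and decreasing with $\bigcap_m C_m = \{0\}$. The complement $U$ of the open ball $B_d(0,\epsilon)$ is closed, so the sets $C_m \cap U$ are compact, decreasing, and have empty intersection. Hence some $C_n \cap U$ is already empty, which gives $F_m \subseteq C_m \subseteq C_n \subseteq B_d(0,\epsilon)$ for all $m \geq n$. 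This is just the Euclidean estimate $\operatorname{diam}(F_m) \leq \sqrt{d_0}\,2^{-m}$, made independent of the choice of compatible metric. This completes the verification of admissibility.
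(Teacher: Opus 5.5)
Your proof is correct and takes the same approach as the paper. The paper uses $\Lambda_n = (\frac{1}{n}\mathbb{Z})^m \times (C_n)^k$ with half-open cubes of side $\frac{1}{n}$ as fundamental domains and simply asserts admissibility; you use dyadic scales and verify conditions (1)--(3), and your compactness argument makes (3) independent of the choice of compatible invariant metric. One harmless slip: $G/\Lambda_n \cong \mathbb{T}^{m+k}$ (a torus modulo a finite subgroup is again a torus), not $\mathbb{T}^m \times (\text{finite group})$; compactness is unaffected, since the quotient is the image of the compact closed cube.
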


\begin{proof}
If $G \cong \mathbb{R}^m \times \mathbb{T}^k$, for each $0< n < \omega$ define
\[
\Lambda_n = \left(\frac{1}{n}\mathbb{Z}\right)^m \times (C_n)^k,
\]
which we identify with its canonical copy inside $G$, where $C_n$ is the cyclic group of $n$-th roots of unity, and
\[
F_n = \left[-\frac{1}{2n}, \frac{1}{2n}\right)^m \times \left( \left\{ e^{2\pi i \theta} : \theta \in \left[-\frac{1}{2n}, \frac{1}{2n}\right) \right\} \right)^k.
\]
Then each $F_n$ is a fundamental domain for $\Lambda_n$, and $(\Lambda_n, F_n)_{0< n < \omega} $ is admissible.
\end{proof}

\begin{example} Let $G = \mathbb{R}^{2}$. The sequence of lattices $(\frac{1}{n}\mathbb{Z} \times \mathbb{Z})_{ 0< n < \omega}$ paired with any fundamental domains is not admissible, while the sequence of lattices $(\frac{1}{n}\mathbb{Z} \times \frac{1}{n}\mathbb{Z})_{ 0< n < \omega}$ paired with $((\frac{1}{2n},\frac{1}{2n}]^2)_{0< n < \omega}$ is admissible. 
\end{example}

\begin{lemma}\label{lemma:approx} Let $G$ be a connected abelian Lie group, $(\Lambda_n,F_n)_{n < \omega}$ be an admissible sequence, and $g \in C_{c}(G)$. For every $\epsilon > 0$ there exists $n < \omega$ and a positive real number $r$ such that for every $m > n$, 
\begin{equation*} 
\sup_{a \in G} |g(a) - g_{m,r}(a)| < \epsilon, 
\end{equation*} 
where $g_{m,r}(x) = \sum_{\lambda \in \Lambda_m \cap B_{r}} g(\lambda) \cdot \mathbf{1}_{\lambda F_m}(x)$ and $B_{r}$ is the closed ball of radius $r$ centered at the identity. 
\end{lemma}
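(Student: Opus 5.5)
The plan is to combine uniform continuity of $g$ with property (3) of admissibility, after choosing $r$ large enough to see the whole support of $g$ at every scale.

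First choose $r$. Let $K=\supp(g)$, which is compact. By admissibility (3) applied with $\epsilon=1$, there is $n_0$ such that for all $m>n_0$ and all $a\in G$ we have $d(a,\lambda_{m,a})<1$. Take $r$ with $K\subseteq B_{r-1}$; this is possible since $K$ is compact, hence bounded in the metric $d$.

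Next, for a fixed $m$ and $a\in G$, compute $g_{m,r}(a)$. By admissibility (1), the translates $\{\lambda F_m:\lambda\in\Lambda_m\}$ partition $G$, so exactly one term of the sum can be nonzero at $a$, namely the one with $\lambda=\lambda_{m,a}$. Hence
\begin{equation*}
g_{m,r}(a)=g(\lambda_{m,a})\,\mathbf{1}_{B_r}(\lambda_{m,a}).
\end{equation*}
If $\lambda_{m,a}\in B_r$ this equals $g(\lambda_{m,a})$. If $\lambda_{m,a}\notin B_r$ and $m>n_0$, then $d(\lambda_{m,a},e)\geq r-1$ up to the strict inequality from (3), so $\lambda_{m,a}\notin K$ and $g(\lambda_{m,a})=0$. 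More carefully: since $\lambda_{m,a}\notin B_r$, we have $d(\lambda_{m,a},e)>r>r-1$, so $\lambda_{m,a}\notin B_{r-1}\supseteq K$. In either case $g_{m,r}(a)=g(\lambda_{m,a})$ for all $m>n_0$ and all $a\in G$.

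Then use uniform continuity. A continuous function with compact support on a metric group is uniformly continuous with respect to $d$. (Since $d$ is bi-invariant, it generates the group topology and the standard compactness argument applies.) So choose $\delta>0$ with $d(x,y)<\delta\Rightarrow |g(x)-g(y)|<\epsilon/2$. Admissibility (3) then gives $n_1$ such that $d(a,\lambda_{m,a})<\delta$ for all $m>n_1$ and all $a$. Taking $n=\max(n_0,n_1)$, for $m>n$ we get $|g(a)-g_{m,r}(a)|=|g(a)-g(\lambda_{m,a})|<\epsilon/2$ for every $a$. Therefore the supremum is at most $\epsilon/2<\epsilon$.

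The main obstacle is the uniform continuity step. It needs a short justification: for each $x\in K$ pick a ball on which $g$ oscillates by less than $\epsilon/4$, extract a finite subcover of $K$, and take a Lebesgue number. Points outside a neighbourhood of $K$ are handled because $g$ vanishes there. Everything else is bookkeeping with the partition in (1).
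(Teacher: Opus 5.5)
Your proof is correct and is essentially the paper's argument: you pick $r$ with $\supp(g)\subseteq B_r$, use the partition in (1) to see that $g_{m,r}(a)=g(\lambda_{m,a})$, and finish with uniform continuity of $g$ plus admissibility (3). The index $n_0$ and the margin $r-1$ are unnecessary, because $\supp(g)\subseteq B_r$ alone already gives $g(\lambda_{m,a})=0$ whenever $\lambda_{m,a}\notin B_r$ (as your own ``more carefully'' line shows); your use of $\epsilon/2$ is a small improvement over the paper, since it makes the supremum itself, not just each pointwise difference, strictly less than $\epsilon$.
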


\begin{proof} Fix $\epsilon > 0$. Since $g$ has compact support, we may choose $r$ such that $B_{r}$ contains the support of $g$. Moreover, $g$ is uniformly continuous. Thus, choose $\delta > 0$ such that whenever $d(x,y) < \delta$, it follows that $d(g(x),g(y)) < \epsilon$. Since $(\Lambda_n,F_n)_{n < \omega}$ is admissible, we can choose $i$ such that for every $j > i$ and $a \in G$, $d(a,\lambda_{j,a}) < \delta$. Setting $n = i$, we notice that for $m > n$,
\begin{equation*} 
d(g(a),g_{m,r}(a)) \leq d(g(a),g(\lambda_{m,a})) + d(g(\lambda_{m,a}),g_{m,r}(a)) < \epsilon + 0. \qedhere
\end{equation*} 
Note that $g_{m,r}(a) = \sum_{\lambda \in \Lambda_m \cap B_{r}} g(\lambda) \cdot \mathbf{1}_{\lambda F_m}(a) = g(\lambda_{m,a})$. 
\end{proof} 

A proof of the following fact can be found in \cite[Corollary 8.A.22]{cornulier2016metric}.

\begin{fact}\label{fact:FG} Let $L$ be a connected Lie group. Then every discrete normal subgroup of $L$ is finitely generated. 
\end{fact}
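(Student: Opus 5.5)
The plan is to show that a discrete normal subgroup $N$ of the connected Lie group $L$ is central, and then to identify it with a quotient of the fundamental group of the connected Lie group $L/N$. Finite generation then reduces to finite generation of the fundamental group of a connected Lie group.

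\textbf{Step 1 (centrality).} Fix $n \in N$ and consider the map $c_n\colon L \to N$, $g \mapsto g n g^{-1}$. It is well defined because $N$ is normal, and it is continuous. Its image is a connected subset of the discrete space $N$, so it is a single point. Since $c_n(e) = n$, we get $gng^{-1} = n$ for all $g \in L$. Hence $N \subseteq Z(L)$; in particular $N$ is abelian and closed.

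\textbf{Step 2 (covering).} Since $N$ is a closed (indeed discrete) normal subgroup, $M := L/N$ is a connected Lie group. The quotient map $p\colon L \to M$ is a covering map whose fibre over the identity is $N$: a neighbourhood $U$ of $e$ with $UU^{-1} \cap N = \{e\}$ (as in Fact~\ref{fact:translate}) gives evenly covered neighbourhoods. The long exact homotopy sequence of this covering (or elementary covering-space theory) gives
\begin{equation*}
\pi_1(L) \to \pi_1(M) \to \pi_0(N) = N \to \pi_0(L) = 0 .
\end{equation*}
So $N$ is a quotient of $\pi_1(M)$; concretely, lifting a loop at $e$ in $M$ to a path starting at $e$ in $L$ and recording its endpoint gives a surjective homomorphism $\pi_1(M) \to N$. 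It therefore suffices to show that $\pi_1(M)$ is finitely generated.

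\textbf{Step 3 (finite generation of $\pi_1$; the main obstacle).} This step requires an input from the structure theory of Lie groups. I would invoke the Cartan--Iwasawa--Malcev theorem: a connected Lie group $M$ has a maximal compact subgroup $K$, and $M$ is diffeomorphic to $K \times \mathbb{R}^d$, so $M$ deformation retracts onto $K$. Then $\pi_1(M) \cong \pi_1(K)$. Here $K$ is a compact connected manifold, and a compact manifold has the homotopy type of a finite CW complex (or admits a finite good cover), so its fundamental group is finitely presented, in particular finitely generated.

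Combining the three steps, $N$ is a quotient of a finitely generated group, hence finitely generated. In the abelian situation used in this section one can avoid the general theorem: by Fact~\ref{fact:classification}, $M \cong \mathbb{R}^m \times \mathbb{T}^k$, so $\pi_1(M) \cong \mathbb{Z}^k$ directly.
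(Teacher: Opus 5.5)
Your proof is correct. The paper gives no argument of its own for this fact; it only cites \cite[Corollary 8.A.22]{cornulier2016metric}, where the statement sits inside the theory of compactly presented locally compact groups. Your route is therefore genuinely different.

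In that framework the argument runs roughly as follows. $L$ is compactly generated, and the connected Lie group $L/N$ is compactly presented. Together these force the kernel $N$ to be generated, as a normal subgroup, by a compact and hence finite subset of the discrete group $N$. Centrality (your Step 1) then converts normal generation into ordinary generation. So centrality is essential there.

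In your argument, by contrast, centrality is dispensable. The path-lifting map $\pi_1(L/N)\to N$ is already a surjective homomorphism using only the normality of $N$ and the path-connectedness of $L$.

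It also helps to see what the real content is. The statement is equivalent to finite generation of $\pi_1$ of every connected Lie group: apply it to the kernel of the universal covering $\tilde{M}\to M$, and your Step 2 gives the converse. You take that finiteness as input, via the Cartan--Iwasawa--Malcev homotopy equivalence of $M$ with a compact $K$. The cited approach instead obtains it as output, shifting the work to proving that connected Lie groups are compactly presented.

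What each route buys:
\begin{itemize}
\item Yours is self-contained modulo classical Lie theory. It is completely explicit in the abelian case actually needed for Theorem~\ref{theorem:embeds}, where $\pi_1(\mathbb{R}^{m}\times\mathbb{T}^{k})\cong\mathbb{Z}^{k}$. For the admissible sequence the paper constructs, $\Lambda_n=(\tfrac1n\mathbb{Z})^m\times C_n^k$ is even visibly finitely generated.
\item The compact-presentation route is more general. It shows that a discrete normal subgroup of any compactly generated locally compact group with compactly presented quotient is finitely generated as a normal subgroup.
\end{itemize}
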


We now prove the main theorem of this section. 

\begin{theorem}\label{theorem:embeds} Let $G$ be a connected abelian Lie group. Then there exists a sequence of finite abelian groups $(H_n)_{n < \omega}$ and an ultrafilter $D$ on $\omega$ such that $(L^1(G),*)$ metrically embeds into $\prod_{D}(\ell^1(H_n),*)$. 
\end{theorem}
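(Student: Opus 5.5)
By Fact \ref{fact:classification} we may take $G=\mathbb{R}^{m}\times\mathbb{T}^{k}$, and we use the admissible sequence $\Lambda_n=(\tfrac1n\mathbb{Z})^{m}\times (C_n)^{k}$ with the fundamental domains $F_n$ constructed above. Normalize the Haar measure so that $\mu(F_n)=n^{-m}\cdot n^{-k}$ (suitably rescaled on the torus factor). The finite groups will be quotients of these lattices. Choose integers $N_n\to\infty$ quickly, say $N_n\ge n^{3}$, and set
\[
H_n=(\mathbb{Z}/N_n\mathbb{Z})^{m}\times(\mathbb{Z}/n\mathbb{Z})^{k}.
\]
Identify $(\tfrac1n\mathbb{Z})^m$ with $\mathbb{Z}^m$ via $\tfrac1n z\mapsto z$, and let $\pi_n\colon\Lambda_n\to H_n$ be the obvious quotient map; this is the product of reductions mod $N_n$ and the identity $C_n\cong \mathbb{Z}/n\mathbb{Z}$. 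Fix a non-principal ultrafilter $D$ on $\omega$.

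For $g\in C_c(G)$, define $\Phi(g)=[\hat g_n]_D$, where
\[
\hat g_n=\sum_{\lambda\in\Lambda_n\cap B_{r_n}} \mu(F_n)\, g(\lambda)\,\mathbf 1_{\pi_n(\lambda)}
\]
and $r_n\to\infty$ is chosen slowly enough that $\pi_n$ is injective on $\Lambda_n\cap B_{2r_n}$. This is possible because $N_n/n\to\infty$. For each fixed $g$, once $r_n$ exceeds the radius of $\operatorname{supp} g$, the element $\hat g_n$ is just the image of the step function $g_{n,r}$ from Lemma \ref{lemma:approx} with weights $\mu(\lambda F_n)=\mu(F_n)$. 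The key steps run in this order.

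\textbf{Norm.} Since $\pi_n$ is injective on the support, $\lVert\hat g_n\rVert_{\ell^1(H_n)}=\lVert g_{n,r}\rVert_{L^1(G)}$. By Lemma \ref{lemma:approx}, $g_{n,r}\to g$ uniformly with supports in a fixed compact set, so the $L^1$ norms converge. The lattice operations $\wedge,\vee$ and addition are preserved in the same way, because $g\mapsto g_{n,r}$ is pointwise evaluation at lattice points, which commutes with $\max$, $\min$ and $+$, and the coordinates do not collide.

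\textbf{Convolution.} For $f,g\in C_c(G)$,
\[
\hat f_n*\hat g_n=\sum_{\lambda,\lambda'}\mu(F_n)^2 f(\lambda)g(\lambda')\mathbf 1_{\pi_n(\lambda\lambda')}.
\]
By injectivity on $B_{2r_n}$, this is the image of $\nu\mapsto \mu(F_n)\sum_{\lambda}\mu(F_n)f(\lambda)g(\lambda^{-1}\nu)$. That sum is a Riemann sum for $(f*g)(\nu)$. Uniform continuity and the admissibility condition (3) give
\[
\sup_\nu\Bigl|\sum_\lambda\mu(F_n)f(\lambda)g(\lambda^{-1}\nu)-(f*g)(\nu)\Bigr|\to0,
\]
with support in a fixed compact set. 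Hence $\lVert \hat f_n*\hat g_n-\widehat{(f*g)}_n\rVert_{\ell^1}\to0$. Here one uses that $f*g\in C_c(G)$ and that $\widehat{(f*g)}_n$ is evaluation of $f*g$ at lattice points. Condition (2), $\mu(F_n\triangle F_n^{-1})=0$, is what makes the symmetric evaluation points consistent. Thus $\Phi$ is an isometric, lattice- and convolution-preserving linear map on $C_c(G)$. Since $C_c(G)$ is dense in $L^1(G)$ and all the operations are uniformly continuous, $\Phi$ extends to a metric embedding of $(L^1(G),*)$.

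The main obstacle I expect is the convolution step. The uniform Riemann-sum estimate has to be stated carefully: the Riemann sum in $\lambda$ with the second argument at lattice points must converge uniformly, and then the uniform estimate must be converted into an $\ell^1$ estimate. Converting requires a bound on the number of lattice points in the compact support, roughly $\mu(K)/\mu(F_n)$, which cancels the factor $\mu(F_n)$. The other delicate point is checking that the wraparound in $H_n$ never identifies the relevant lattice points. This is handled by choosing $N_n\gg n r_n$; the torus coordinates need no wraparound control, since $C_n\to\mathbb{Z}/n\mathbb{Z}$ is an isomorphism.
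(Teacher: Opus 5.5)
Your proposal is correct and follows essentially the same route as the paper. You sample at points of the admissible lattices $\Lambda_n$ with weight $\mu(F_n)$, push forward along a homomorphism onto a finite abelian group that is injective on a growing ball, and check norm and convolution against the step functions of Lemma \ref{lemma:approx}. The differences are minor. The paper obtains $\rho_n\colon\Lambda_n\to H_n$ abstractly from Fact \ref{fact:FG} and residual finiteness, and it puts the weight $\mu(F_n)$ into the measure on $H_n$. It also verifies convolution by computing $f_{m,m}*g_{m,m}$ exactly at lattice points, which is where $\mu(F_n\triangle F_n^{-1})=0$ is used. Your direct Riemann-sum estimate does not need condition (2) at all, so your remark about it is superfluous. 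You also spell out the conversion from a sup-norm bound to an $\ell^1$ bound by counting lattice points, which the paper leaves implicit.
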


\begin{proof}
Let $(\Lambda_n,F_n)_{n < \omega}$ be a sequence of lattices and fundamental domains which is admissible for $G$. We recall that $C_{c}(G) \subseteq_{dense} L^{1}(G)$ and since metric ultraproducts are complete, it suffices to prove that $C_{c}(G)$ metrically embeds into the ultraproduct with the $L^1(G)$ norm. 

    For any positive real number $r$ we let $B_{r}$ be the closed ball of radius $r$ centered at the identity. For each $n < \omega$, the set $\Lambda_n \cap B_n$ is finite since $B_n$ is compact and $\Lambda_n$ is discrete. Since each $\Lambda_n$ is a lattice in a connected abelian Lie group, $\Lambda_n$ is isomorphic to the product of finitely many copies of $\mathbb{Z}$ and a finite abelian group (by the fundamental theory of finitely generated abelian groups and Fact \ref{fact:FG}). Thus, it is residually finite, and so we may choose a finite group $H_{n}$ and a surjective homomorphism $\rho_n \colon \Lambda_n \to H_n$ such that the map $\rho_n|_{\Lambda_n \cap B_n}$ is injective. Since $G$ is abelian, so is $\Lambda_n$. Since the map $\rho_n$ is surjective, the group $H_n$ is also abelian.

    Consider the family of finite groups given by $(H_n)_{n < \omega}$. Equip each $H_n$ with the measure $\mu_n$ where for each $s \in H_{n}$, $\mu_n(\{s\}) = \mu(F_n)$. Notice that if $f,g \in \ell^{1}(H_n)$, then 

    \begin{equation*}
        (f * g)(\{t\}) = \sum_{s \in H_n} f(s) g(s^{-1} t) \mu_n(\{s\}) = \sum_{s \in H_n} f(s) g(s^{-1} t) \mu(F_n).
    \end{equation*}

Let $D$ be a non-principal ultrafilter over $\omega$. We define the map $\Phi\colon C_{c}(G) \to \prod_{D} \ell^{1}(H_n)$ via $\Phi(f) = [\tilde{f}_n]_{D}$ as follows: let $C$ be the support of $f$. If $C \not \subseteq B_n$, then let $\tilde{f}_n$ be any element of $\ell^{1}(H_n)$. If $C \subseteq B_n$, then
\begin{equation*}
    \tilde{f}_{n}(t) = 
    \begin{cases}
        f(a) & \text{if there exists } a \in \Lambda_n \cap B_n \text{ such that } \rho_n(a) = t, \\
        0 & \text{otherwise}.
    \end{cases}
\end{equation*}
Notice that $f_n$ is well-defined since $\rho_n$ is injective on $\Lambda_n \cap B_n$. We claim that $\Phi$ extends to a metric embedding of $(L^{1}(G),*)$ into $\prod_{D}(\ell^{1}(H_n),*)$. We check that $\Phi$ preserves the norm and convolution and leave the other operations as an exercise for the reader. Note that, in particular, the unit ball of $(L^{1}(G),*)$ goes into the unit ball of 
$\prod_{D}(\ell^{1}(H_n),*)$.

\begin{claim-star}The map $\Phi$ preserves norm. 
\end{claim-star}
\begin{claimproof} Fix $f \in L^{1}(G)$. It suffices to prove that
\begin{equation*}
    \lim_{n \to \infty} \lVert \tilde{f}_n \rVert_{\ell^{1}(H)}= \lVert f \rVert_{L^{1}(G)}. 
\end{equation*}

Fix $\epsilon > 0$. By Lemma \ref{lemma:approx}, there exists some integer $N_\epsilon$ such that for every $m > N_{\epsilon}$, $\lVert f - f_{m,m} \rVert_\infty < \epsilon$ where $f_{m,m} = \sum_{\lambda \in \Lambda_{m} \cap B_{m}} f(\lambda) \cdot \mathbf{1}_{\lambda F_{m}}(x)$. For $m > N_{\epsilon}$,  
\begin{align*}
    \lVert \tilde{f}_m \rVert_{\ell^1(H_m)} &= \sum_{s \in H_m} |\tilde{f}_m(s)| \, \mu_m(\{s\}) \\ 
    &= \sum_{s \in H_m} |\tilde{f}_m(s)| \, \mu(F_m) \\ 
    &= \sum_{\lambda \in \Lambda_m \cap B_m } |f(\lambda)| \, \mu(F_m) \\
    &= \sum_{\lambda \in \Lambda_m \cap B_m} \int_{G} |f(\lambda)| \mathbf{1}_{\lambda F_m}(x) \, d\mu(x) \\ 
    &= \lVert f_{m,m} \rVert_{L^1(G)} \approx_{\epsilon} \lVert f \rVert_{L^1(G)}
\end{align*}
Hence, the claim holds. 
\end{claimproof}

\begin{claim-star} The map $\Phi$ preserves convolution. 
\end{claim-star}

\begin{claimproof}Fix $f, g \in C_{c}(G)$ with compact supports $C$ and $D$ respectively. Let $h = f * g$. Notice that $h$ again has compact support and in particular, the support of $h$ is a subset of $C \cdot D$. It suffices to show that 
\begin{equation*}
    \lim_{n \to \infty} \lVert \tilde{h}_n - \tilde{f}_n * \tilde{g}_n \lVert_{\ell^{1}(H_n)} = 0. 
\end{equation*}
Fix $1 > \epsilon > 0$. By Lemma \ref{lemma:approx}, there exists some $n$ such that 
\begin{enumerate}
    \item $C \cup D \cup C \cdot D \subseteq B_{n}$,
    \item For $k \in \{h,f,g\}$ and for $m > n$, 
    \begin{equation*}
        \sup_{x \in G}|k(x) - \sum_{a \in \Lambda_m \cap B_m} k(a) \cdot \mathbf{1}_{aF_m}(x)| < \delta \coloneqq \frac{\epsilon}{\lVert f \rVert_{\infty} + \lVert g \rVert_{\infty} + 1 }. 
    \end{equation*}
\end{enumerate}
Fix $m > n$. We now argue that for every $t \in H_{m}$,
\begin{equation*}
    |\tilde{h}_m(t) - (\tilde{f}_{m}* \tilde{g}_m)(t)| < \epsilon. 
\end{equation*}
We have two cases.

\textbf{Case 1:} There does not exist some $a \in \Lambda_{m} \cap B_m$ such that $\rho_{m}(a) = t$. Then, $\tilde{h}_{m}(t) = 0$ by definition. On the other hand, we have that 
\begin{align*}
    (\tilde{f}_{m} * \tilde{g}_{m})(t) &= \sum_{s \in H_m} \tilde{f}_{m}(s) \tilde{g}_m(s^{-1} \cdot t) \mu_m(\{s\}) \\
    &\overset{(1)}{=} \sum_{s \in \im(\rho_m|_{\Lambda_{m}\cap B_{m}})} \tilde{f}_{m}(s) \tilde{g}_m(s^{-1} \cdot t)\mu_m(\{s\}) \overset{(2)}{=} 0. 
\end{align*}

We briefly justify the sequence of equations above: 
\begin{enumerate}
    \item $\tilde{f}_{m}(s) > 0$ only if $s \in \im(\rho_m|_{\Lambda_{m} \cap B_{m}})$. 
    \item Towards a contradiction, suppose that there exists some $s \in \im(\rho_{m}|_{\Lambda_{m} \cap B_{m}})$ such that $|\tilde{f}_{m}(s) \tilde{g}_m(s^{-1} \cdot t)| > 0$. Then $|\tilde{f}_{m}(s)| > 0$ and $|\tilde{g}_{m}(s^{-1} \cdot t)| > 0$. By construction, there exists some $a \in \supp(f)$ and $b \in \supp(g)$ such that $\rho_{m}(a) = s$ and $\rho_{m}(b) = s^{-1} \cdot t$. Then $a \cdot b \in C \cdot D \subseteq B_{m}$. Thus $\rho_{m}(a \cdot b) = \rho_{m}(a) \cdot \rho_{m}(b) = s \cdot s^{-1} \cdot t = t$, which contradicts our initial assumption. 
\end{enumerate}

\textbf{Case 2:} There exists some $a \in \Lambda_{m} \cap B_m$ such that $\rho_{m}(a) = t$. Consider the following computation:

\begin{align*}
\tilde{h}_{m}(t) &= (f * g)(a) \overset{(1)}{\approx_{\epsilon}} \left( \left( \sum_{c \in \Lambda_m \cap B_m} f(c) \mathbf{1}_{cF_m} \right) * \left( \sum_{b \in \Lambda_m \cap B_m} g(b) \mathbf{1}_{bF_m} \right)\right)(a) \\
&= \sum_{c \in \Lambda_m \cap B_m} \sum_{b \in \Lambda_m \cap B_m} \int_{x \in G} f(c)g(b)\mathbf{1}_{cF_m}(x) \cdot \mathbf{1}_{bF_m}(x^{-1} \cdot a) d\mu \\ 
&\overset{(2)}{=} \sum_{c \in \Lambda_m \cap B_m} \int_{x \in G} f(c)g(c^{-1} \cdot a) \mathbf{1}_{cF_m}(x) d\mu \\
&= \sum_{c \in \Lambda_m \cap B_m}  f(c)g(c^{-1} \cdot a) \mu(F_m) \\
&\overset{(3)}{=} \sum_{s \in H_m} \tilde{f}_m(s)\tilde{g}_m(s^{-1} \cdot t) \mu_m(\{s\}) \\
&=(\tilde{f}_m * \tilde{g}_m)(t). 
\end{align*}

We provide several justifications: 

\begin{enumerate}
    \item Since $\lVert f - f_{m,m} \rVert_{\infty}, \lVert g - g_{m,m} \rVert_{\infty} < \delta$, it follows that 
    \begin{align*}
        \lVert f * g - f_{m,m} * g_{m,m} \rVert_{\infty} &< \delta \lVert g \rVert_{\infty} + \delta \lVert f_{m,m} \rVert_{\infty} \\ 
        &< \epsilon \left( \frac{\lVert g \rVert_{\infty} + \lVert f_{m,m} \rVert_{\infty}}{\lVert g \rVert_{\infty} + \lVert f \rVert_{\infty} + 1} \right) < \epsilon. 
    \end{align*}
\item Notice that for every $x \in G$, 
    \begin{equation*}
        \mathbf{1}_{cF_m}(x) \cdot \mathbf{1}_{bF_m}(x^{-1} \cdot a) = 1
    \end{equation*}
    if and only if $x \in cF_m$ and $x^{-1} \cdot a \in bF_m$. Now, 
    \begin{align*}
        x^{-1} \cdot a \in bF_m &\Longleftrightarrow x^{-1} \in bF_m a^{-1} \\
        &\Longleftrightarrow x \in a F_m^{-1} b^{-1} \\
        &\Longleftrightarrow  x \in ab^{-1} F_m^{-1},
    \end{align*} 
The last bi-implication follows from the fact that $G$ is abelian. Since $\mu(F_m \triangle F_m^{-1}) = 0$, 
    \begin{align*}
        \int_{x \in G} \mathbf{1}_{cF_m}(x) \cdot \mathbf{1}_{bF_m}(x^{-1} \cdot a) d\mu &= \int_{x \in G} \mathbf{1}_{cF_m}(x) \cdot \mathbf{1}_{ab^{-1}F_m^{-1}}(x) d\mu \\
        &= \int_{x \in G} \mathbf{1}_{cF_m}(x) \cdot \mathbf{1}_{ab^{-1}F_m}(x) d\mu 
    \end{align*}
    Notice that the final term is positive only if $ab^{-1} = c$ (note that here we used the uniqueness condition in part (1) of Definition \ref{defn:admissible}) or in other words, $b = c^{-1} a$. Thus the change in parameters is justified. 
    \item Notice that 
    \begin{align*}
        \sum_{c \in \Lambda_m \cap B_m} f(c)g(c^{-1} \cdot a) \mu(F_m) = \sum_{\substack{c \in \Lambda_m \cap B_m \\ |f(c)g(c^{-1} a)| > 0}} f(c)g(c^{-1} \cdot a) \mu(F_m)
    \end{align*}
    If $|f(c)g(c^{-1} \cdot a)| > 0$, then $c \in \supp(f)$ and $c^{-1} a \in \supp(g)$. By assumption, we have that $\supp(f), \supp(g) \subseteq B_n \subset B_m$. Moreover, since $a$ was assumed to be in $\Lambda_m \cap B_m$, we conclude that if $|f(c)g(c^{-1} \cdot a)| > 0$, then $c, c^{-1} \cdot a \in \Lambda_m \cap B_m$. If $|f(c)g(c^{-1} \cdot a)| > 0$, then $f(c) = \tilde{f}_{m}(\rho_m(c))$ and 
\begin{equation*}
        g(c^{-1} \cdot a) = \tilde{g}_m(\rho_m(c^{-1} \cdot a)) =  \tilde{g}_m(\rho_m(c^{-1}) \cdot \rho_m(a)) = \tilde{g}_m(\rho_m(c)^{-1} \cdot t).
\end{equation*}
    Hence, 
    \begin{align*}
         &\sum_{\substack{c \in \Lambda_m \cap B_m \\ |f(c)g(c^{-1} a)| > 0}} f(c)g(c^{-1} \cdot a) \mu(F_m) \\  &=\sum_{\substack{c \in \Lambda_m \cap B_m \\ |f(c) g(c^{-1} a)| > 0}} \tilde{f}_m(\rho_m(c))\tilde{g}_m(\rho_m(c)^{-1} \cdot t) \mu(F_m) \\
         &= \sum_{s \in A_{m,t}} \tilde{f}_m(s) \tilde{g}_{m}(s^{-1} \cdot t) \mu_{m}(\{s\}) \\
         &= \sum_{s \in H_m} \tilde{f}_{m}(s) \tilde{g}_m(s^{-1} \cdot t)\mu_{m}(\{s\}), 
    \end{align*}
    where $A_{m,t} = \{s \in H_{m}: \exists c \in \Lambda_{m} \cap B_{m} $ such that $\rho_{m}(c) = s$ and $|f(c) \cdot g(c^{-1}a)| >0\}$. Notice that if $s \in H_m \backslash A_{m,t}$, then suppose there does not exist some $c \in \Lambda_m \cap B_m$ such that $\rho_{m}(c) = s$. In this case, we have that $\tilde{f}_{m}(s) = 0$ by construction and thus $\tilde{f}_{m}(s) \cdot \tilde{g}_{m}(s^{-1} \cdot t) =0$. Hence, adding this term does not change the value of the summation. Otherwise, if there does exist some $c \in \Lambda_{m} \cap B_{m}$ such that $\rho_{m}(c) = s$ but $f(c) \cdot g(c^{-1} \cdot a) = 0$, then $\tilde{f}_m(s)\tilde{g}_m(s^{-1} \cdot t ) = 0$. Again, we may add this term to the summation without effecting its value. 
\end{enumerate}
\end{claimproof}
As we said earlier, it is straightforward to check that the map $\Phi$ preserves the other operations, i.e.,  addition, min, max, and so we leave these as an exercise to the reader. 
\end{proof}

\begin{corollary} Let $G$ be a connected abelian Lie group. Then there exists a sequence of natural numbers $(n_i)_{i < \omega}$ such that $L^1(G)$ isometrically embeds into $\prod_{D}\ell^1(\mathbb{Z}/n_i\mathbb{Z})$.
\end{corollary}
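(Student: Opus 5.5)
The plan is to run the proof of Theorem \ref{theorem:embeds} again, but to choose each finite quotient $H_n$ to be cyclic. Everything in that proof uses only three properties of the maps $\rho_n\colon \Lambda_n \to H_n$: each is a surjective homomorphism onto a finite abelian group, each is injective on the finite set $\Lambda_n \cap B_n$, and the counting measure on $H_n$ has weight $\mu(F_n)$. So it suffices to find, for each $n$, a surjective homomorphism $\rho_n\colon\Lambda_n\to\mathbb{Z}/N_n\mathbb{Z}$ that is injective on $\Lambda_n\cap B_n$. The norm and convolution claims then go through verbatim, with $H_n=\mathbb{Z}/N_n\mathbb{Z}$.

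A surjection onto a cyclic group may not exist when the torsion part of $\Lambda_n$ is not cyclic. To avoid this, I will use the explicit admissible sequence from the Fact preceding Lemma \ref{lemma:approx}, namely $\Lambda_n=(\frac1n\mathbb{Z})^m\times (C_n)^k\cong \mathbb{Z}^m\times(\mathbb{Z}/n\mathbb{Z})^k$. The construction is as follows.
\begin{enumerate}
\item Choose $M$ so large that the coordinates of the finitely many points of $\Lambda_n\cap B_n$ all have $\mathbb{Z}$-parts in $(-M,M)$.
\item Choose pairwise coprime integers $q_1,\dots,q_m$, each larger than $2M$ and coprime to $n$.
\item Reduce modulo these to obtain a homomorphism $\Lambda_n\to \prod_i\mathbb{Z}/q_i\mathbb{Z}\times(\mathbb{Z}/n\mathbb{Z})^k$, which is injective on $\Lambda_n\cap B_n$.
\end{enumerate}
The target is still not cyclic when $k\ge2$. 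In that case I instead compose with a homomorphism $(\mathbb{Z}/n\mathbb{Z})^k\to\mathbb{Z}/n^k\mathbb{Z}$ that is injective on the torsion part: for $k\ge 2$ there is none, since an injection would force the exponent $n$ to equal $n^k$.

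This injectivity on the torus coordinates is the main obstacle, and I would handle it by refining the torus lattices differently. I would take $\Lambda_n=(\frac1n\mathbb{Z})^m\times C_{p_{n,1}}\times\dots\times C_{p_{n,k}}$, where the $p_{n,j}$ are distinct primes exceeding $n$ (and coprime to the $q_i$). I would take $F_n$ to be the corresponding product of intervals, of lengths $1/p_{n,j}$ on the torus factors. This family is still admissible: condition (3) holds because the mesh goes to $0$, and conditions (1) and (2) hold because the intervals are half-open and centred at $0$, exactly as in the existing Fact. By the Chinese remainder theorem,
\[
\textstyle\prod_i\mathbb{Z}/q_i\mathbb{Z}\times\prod_j\mathbb{Z}/p_{n,j}\mathbb{Z}\cong\mathbb{Z}/N_n\mathbb{Z},\qquad N_n=\prod_i q_i\prod_j p_{n,j}.
\]
So the reduction map is a surjective homomorphism $\rho_n\colon\Lambda_n\to\mathbb{Z}/N_n\mathbb{Z}$. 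It is injective on $\Lambda_n\cap B_n$, because the torus coordinates are recorded faithfully and the real coordinates are bounded by $M<q_i/2$.

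With these choices, the argument of Theorem \ref{theorem:embeds} yields a metric embedding of $(L^1(G),*)$ into $\prod_D(\ell^1(\mathbb{Z}/N_n\mathbb{Z}),*)$, where each $\mathbb{Z}/N_n\mathbb{Z}$ carries the weighted counting measure $\mu(F_n)$. If the statement is read with normalized counting measure on each factor, I would rescale the embedding coordinatewise by $\mu(F_n)$, i.e.\ use $f\mapsto \mu(F_n)\tilde f_n$; this commutes with convolution and preserves the $\ell^1$ norm. Setting $n_i=N_i$, and passing to a subsequence to make them increasing if desired, gives the corollary.
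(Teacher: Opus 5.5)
Your proposal is correct and follows essentially the paper's route: rerun the proof of Theorem \ref{theorem:embeds} with each $H_n$ a product of cyclic groups of pairwise coprime orders, which is cyclic by the Chinese remainder theorem. You also make explicit two points the paper's brief proof leaves implicit: the torus factors of the lattices must be replaced by $C_{p_{n,j}}$ for distinct primes, since injectivity on $\Lambda_n\cap B_n$ forces the torsion part $(C_n)^k$ to embed in a cyclic group, which is impossible for $k\ge 2$; and the weight $\mu(F_n)$ on $H_n$ can be removed by rescaling.
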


\begin{proof} Suppose that $G \cong \mathbb{R}^{m} \times \mathbb{T}^{k}$. In the proof of Theorem \ref{theorem:embeds}, one can find an indexed family of distinct primes $\{p_{(i,j)} : i < \omega, j \leq m + k\}$ and choose $H_i$ so that,
\begin{equation*}
    H_i \cong \prod_{j = 1}^{m+k} \mathbb{Z}/p_{(i,j)}\mathbb{Z}. 
\end{equation*}
The statement thus holds by the fundamental theorem of finitely generated abelian groups, i.e., for each $i < \omega$, $\prod_{j =1}^{m + k} \mathbb{Z}/p_{(i,j)}\mathbb{Z} \cong  \mathbb{Z}/\prod_{j =1}^{m + k}p_{(i,j)}\mathbb{Z}$. 
\end{proof}

\begin{question}
For which non-abelian Lie group $G$ does there exist an embedding $\Psi\colon (L^{1}(G),*) \to \prod_{D} (\ell^{1}(G_i),*)$ where $(G_i)_{i \in I}$ is a family of finite groups and $D$ is an ultrafilter over $I$? Does it follow if $G$ is a Lie group with a family of admissible lattices and fundamental domains which are residually finite?
\end{question}

\printbibliography

\end{document}